\documentclass[11pt,reqno]{amsart} \usepackage{amssymb, mathrsfs, enumerate}
\usepackage{microtype}
\usepackage{fontenc}
\usepackage{xurl, colonequals, bbold}
\usepackage[margin=0.8in,headheight=13.6pt]{geometry}

\usepackage{mathtools}
\usepackage{quiver}
\usepackage[mathlines]{lineno}

\usepackage[
    style=alphabetic,
    backend=biber,
    giveninits=true,
    doi=false,
    isbn=false,
    url=false,
    eprint=false
]{biblatex}

\DeclareNameAlias{default}{given-family}
\DeclareNameAlias{sortname}{given-family}

\DeclareFieldFormat[article,periodical]{title}{\mkbibitalic{#1}}
\DeclareFieldFormat[book,inbook,incollection]{title}{\mkbibitalic{#1}}
\DeclareFieldFormat{journaltitle}{#1}

\renewbibmacro{in:}{%
  \ifentrytype{article}{}{\printtext{\bibstring{in}\intitlepunct}}}

\AtEveryBibitem{\clearfield{pages}}
\AtEveryCitekey{\clearfield{pages}}
\renewbibmacro*{volume+number+eid}{%
  \printfield{volume}%
  \setunit*{\addspace}%
  \printfield{number}%
  \setunit{\addcomma\space}%
}

\usepackage[all]{xy}
\usepackage{tikz-cd}
\usepackage[colorlinks=true,bookmarksnumbered=true,pdfpagemode=None,extension=pdf]{hyperref}
\hypersetup{
	colorlinks,
	linkcolor={red!50!black},
	citecolor={green!50!black},
	urlcolor={blue!80!black}
}
\hypersetup{pdfauthor={Ayan Nath}}

\newtheorem{theorem}{Theorem}[subsection]  
\renewcommand\thetheorem{%
  \ifnum\value{subsection}>0
    \thesubsection.\arabic{theorem}%
  \else
    \thesection.\arabic{theorem}%
  \fi
}
\newtheorem{proposition}[theorem]{Proposition}  
\newtheorem{lemma}[theorem]{Lemma}  
\newtheorem{corollary}[theorem]{Corollary}  
\newtheorem{conjecture}[theorem]{Conjecture}  
\newtheoremstyle{nonitalic}
  {10pt} 
  {10pt} 
  {\normalfont} 
  {} 
  {\bfseries} 
  {.} 
  {5pt plus 1pt minus 1pt} 
  {} 

\theoremstyle{nonitalic}
\newtheorem{example}[theorem]{Example}  
\newtheorem{remark}[theorem]{Remark}  
\newtheorem{assumption}[theorem]{Assumption}  

\DeclareMathOperator{\Aut}{Aut}

\DeclareMathOperator{\Out}{Out}

\DeclareMathOperator{\Dyn}{Dyn}

\DeclareMathOperator{\SL}{SL}

\DeclareMathOperator{\Spec}{Spec}

\DeclareMathOperator{\Mat}{Mat}

\DeclareMathOperator{\ad}{ad}

\renewcommand{\ge}{\geqslant}

\renewcommand{\le}{\leqslant}

\newcommand{\Mod}[1]{\ (\mathrm{mod}\ #1)}
\newcommand{\ii}{\item}
\newcommand{\mf}{\mathfrak}
\newcommand{\inj}{\hookrightarrow}

\newcommand{\ol}{\overline}
\newcommand{\ul}{\underline}

\newcommand{\del}{\partial}

\DeclareSymbolFont{arrows}{OMS}{cmsy}{m}{n}
\DeclareMathSymbol{\leftrightarrow}{\mathrel}{arrows}{"24}
\DeclareMathSymbol{\leftarrow}{\mathrel}{arrows}{"20}
   
\DeclareMathSymbol{\rightarrow}{\mathrel}{arrows}{"21}
   \let\to=\rightarrow
\DeclareMathSymbol{\mapstochar}{\mathrel}{arrows}{"37}
\DeclareMathSymbol{\relbardash}{\mathbin}{arrows}{"00}

\newcommand{\OO}{\mathcal O}

\newcommand{\ZZ}{\mathbb Z}
\renewcommand{\AA}{\mathbb A}
\newcommand{\PP}{\mathbb P}

\newcommand{\RR}{\mathbb R}
\newcommand{\GG}{\mathbb G}
\newcommand{\QQ}{\mathbb Q}

\newcommand{\HH}{\mathrm H} 

\newcommand{\tensor}{\otimes}

\usepackage{stmaryrd}
\usepackage[perpage]{footmisc}
\usepackage[T1]{fontenc}
\renewcommand{\mathbb}[1]{\mathbf{#1}}
\DeclareMathSymbol{G}{\mathalpha}{operators}{`G}
\DeclareMathSymbol{B}{\mathalpha}{operators}{`B}
\DeclareMathSymbol{T}{\mathalpha}{operators}{`T}
\DeclareMathSymbol{P}{\mathalpha}{operators}{`P}
\DeclareMathSymbol{V}{\mathalpha}{operators}{`V}
\DeclareMathSymbol{Z}{\mathalpha}{operators}{`Z}
\newcommand{\G}{\ensuremath{\mathbf G}}
\newcommand{\B}{\ensuremath{\mathbf B}}
\newcommand{\T}{\ensuremath{\mathbf T}}
\newcommand{\U}{\ensuremath{\mathbf U}}
\newcommand{\XX}{\ensuremath{\mathbb X}}
\newcommand{\mU}{\ensuremath{\mathrm U}}
\newcommand{\torus}{\ensuremath{\GG_{\mathrm m,k}^2}}

\newcommand{\Gqs}{\ensuremath{\mathbf G_{\text{q-\'ep}}}}

\renewcommand{\ad}{\mathrm{ad}}
\renewcommand{\P}{\ensuremath{\mathbf P}}
\newcommand{\M}{\ensuremath{\mathbf M}}
\newcommand{\fil}{\operatorname{fil}}

\begin{document}

\title{Compactification of reductive group schemes}

\begin{abstract}
    Let $G$ be 
    an isotrivial reductive group over a scheme $S$. 
    We construct a smooth projective
    $S$-scheme containing $G$ as a fiberwise-dense open subscheme
    equipped with left and right actions of $G$ which extend the translation actions of $G$ on itself.
    This verifies a conjecture of Česnavičius \cite{kestutis}.
    When $G$ is adjoint, we recover fiberwise the wonderful compactification.
    Finally, we give an example of a non-isotrivial torus admitting no equivariant compactification.
\end{abstract}

\author{Ayan Nath}
\address{Massachusetts Institute of Technology, Cambridge, MA, USA}
\email{ayannath@mit.edu}

\date{January 17th, 2026}

\maketitle

\section{Introduction}

A reductive group scheme $G$ over a base scheme $S$ is called {\em locally
isotrivial} if each point $s \in S$ has a Zariski open neighborhood $U_s \inj
S$ admitting a finite \'etale cover $U'_s \to U_s$ such that $G\times_S U'_s$
is split.
This is equivalent to local isotriviality of the central torus \cite[Expos\'e XXIV, Théorème
4.1.5]{sga3}.
A reductive group is called {\em isotrivial} if $U_s$ can be chosen to be $S.$

Česnavičius  conjectured that every isotrivial reductive group scheme $G$ has a compactification equipped with a left action of $G$ extending that of $G$ on itself:

\begin{conjecture}[\protect{\cite[Conjecture 6.2.3]{kestutis}}]\label{conj:main}
For an isotrivial reductive group $G$ over a Noetherian scheme $S$, there are a projective, finitely presented $S$-scheme $\overline{G}$ equipped with a left $G$-action and a $G$-equivariant $S$-fiberwise dense open immersion
\[
    G \hookrightarrow \overline{G}.
\]
\end{conjecture}

The case where $G$ is an isotrivial torus was established in \cite[\S 6.3]{kestutis} (see also \cite[Proposition 7.2.1]{kundu}). For adjoint $G$, Li \cite{shangli} confirmed the conjecture using a variant of the Artin--Weil method of birational group laws.
In this paper, we construct a smooth
projective $G\times_S G$-equivariant compactification for any
isotrivial reductive group $G$ over an arbitrary base $S$,
thus verifying
Conjecture \ref{conj:main}:

\begin{theorem}\label{thm:main}
    Let $G$ be an isotrivial reductive group scheme over a scheme $S.$
    Then there exists a finitely presented smooth projective $S$-scheme
    $\ol G$ containing $G$ as a
    fiberwise-dense open subscheme equipped with a left and right action of $G$
    extending that on $G$ given by left and right multiplication.
\end{theorem}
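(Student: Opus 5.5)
The plan is to build $\ol G$ as a toroidal (reductive-monoid-type) embedding attached to a fan, first for split $G$ over $\ZZ$ and then to descend along the finite étale cover that trivializes $G$.

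\textbf{Split case.} Let $G_0$ be the split Chevalley form over $\ZZ$ with maximal torus $T_0$, Borel $B_0$, Weyl group $W$, and cocharacter lattice $X_*(T_0)$. I will choose a fan $\Sigma$ in $X_*(T_0)_\RR$ with three properties:
\begin{itemize}
\item it is smooth and projective;
\item it is invariant under $W$;
\item it is invariant under a chosen finite group $\Gamma$ of automorphisms of the root datum that preserve the pinning, namely the image of the Galois-type action coming from the isotrivializing cover.
\end{itemize}
Such a fan exists. Start from the Weyl chamber fan, refine it $W\rtimes\Gamma$-equivariantly to a smooth fan, and use an equivariant toric resolution to keep the fan projective. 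This is the step where isotriviality enters: the relevant symmetry group must be finite.

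The toric variety $X_\Sigma$ is then a smooth projective $W\rtimes\Gamma$-equivariant compactification of $T_0$. Let $X_{\Sigma^+}$ be the affine-chart union over the cones in the antidominant chamber. I then define $\ol{G_0}$ by gluing $W\times W$-translates of the chart
\[
U_0^-\times X_{\Sigma^+}\times U_0 ,
\]
in the manner of the wonderful compactification, equivalently the toroidal embeddings of De Concini–Procesi and Brion–Kumar, worked over $\ZZ$.

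An alternative route is to realize $\ol{G_0}$ as a GIT/Proj construction: take the normalization of the closure of $G_0$ in $\PP(\mathrm{End}(V))$ for suitable Weyl modules $V$, then blow up. I prefer the chart description because it gives smoothness over $\ZZ$ directly, since each chart is $\AA^N\times$(smooth affine toric). The key checks are as follows:
\begin{itemize}
\item the $G_0\times G_0$-action extends; I would verify this via the big cell and the Bruhat decomposition over $\ZZ$, or by realizing $\ol{G_0}$ inside a projective space with a linear $G_0\times G_0$-action;
\item the result is separated and projective, using an ample $W$-invariant strictly convex piecewise-linear support function to build a relatively ample line bundle;
\item $\ol{G_0}$ is fiberwise the corresponding embedding over each field, so $G_0$ is fiberwise dense.
\end{itemize}

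\textbf{Descent.} Since $G$ is isotrivial, there is a finite étale Galois cover $S'\to S$ with group $\Gamma$ over which $G$ becomes split. I may pass to a connected-components argument, or work with a $\Gamma$-torsor in general. Then $G$ is the twist of $G_0\times S$ by a cocycle valued in $\Aut(G_0)=G_0^{\ad}\rtimes\Out$, where the outer part factors through the finite group $\Gamma$.

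The construction is functorial for automorphisms of $G_0$ preserving the fan:
\begin{itemize}
\item inner automorphisms act through $G_0\times G_0$;
\item pinned outer automorphisms act through $\Gamma$-invariance of $\Sigma$.
\end{itemize}
So $\Aut(G_0)$ acts on $\ol{G_0}$ compatibly with $G_0\times G_0$, and I can twist $\ol{G_0}\times S'$ to obtain an algebraic space $\ol G$ over $S$.

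It remains to show that $\ol G$ is a projective scheme. The ample line bundle I use must be $\Aut(G_0)$-linearized, or at least descend. I would take the anticanonical bundle, or a $\Gamma$-invariant ample bundle built from a $W\rtimes\Gamma$-invariant support function. Raising it to a power kills the obstruction from the center, and then effective descent for finite étale covers with an ample linearized bundle makes $\ol G$ projective. Smoothness, finite presentation, fiberwise density and the two-sided action all descend.

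\textbf{Main obstacle.} I expect the main obstacle to be constructing the split model over $\ZZ$ with the $G_0\times G_0$-action extending and projectivity holding uniformly in all characteristics, including small ones where representation-theoretic arguments over fields break. What I would try first is to define $\ol{G_0}$ as the closure of $G_0$ in a product of projective spaces $\PP(\mathrm{End}(V_\lambda))$ for Weyl modules $V_\lambda$ over $\ZZ$, indexed by generators of the support function. I would then identify this closure with the glued charts by computing on the big cell, where $T_0$ maps into the toric variety $X_\Sigma$. Finally, I would deduce flatness and smoothness from the chart description.
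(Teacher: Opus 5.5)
\textbf{Gap: the split construction over $\ZZ$.} Your reduction step essentially matches the paper's: pass to a pinned split group with a finite group $\Gamma$ of pinned automorphisms, choose a $W$- and $\Gamma$-stable smooth projective fan, and descend along the finite \'etale cover. Everything therefore rests on the split construction over $\ZZ$. You flag this as the main obstacle but never carry it out, and that is a genuine gap.

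Gluing $W\times W$-translates of $U_0^-\times X_{\Sigma^+}\times U_0$ needs transition isomorphisms. You would have to show that the a priori only rational action of $(\dot w_1,\dot w_2)$, and then of all of $G_0\times G_0$, on the chart is regular on suitable opens. Afterwards you would still need the glued scheme to be separated and proper, with a regular action. Over a general base this is Li's birational-group-law method, which is available only for adjoint groups; the paper notes it is unclear how to extend it. Invoking ``the big cell and the Bruhat decomposition over $\ZZ$'' does not produce these morphisms.

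Moreover, an ample line bundle from a support function gives at most quasi-projectivity, and your plan never addresses properness. The fallback via closures in $\prod_\lambda\PP(\mathrm{End}(V_\lambda))$ is not straightforward over $\ZZ$ either:
\begin{itemize}
\item the flat closure need not have $\mathbb F_p$-fibers equal to the closure of $G_{0,\mathbb F_p}$;
\item the closure of $T_0$ is the toric scheme attached to the weights of the $V_\lambda$, which lie in cosets of the root lattice, and it need not be normal;
\item normalization does not commute with reduction mod $p$;
\item the unspecified blow-up would have to be equivariant and fiberwise correct.
\end{itemize}
So over $\ZZ$ the extension of the two-sided action, smoothness, properness and fiberwise density all remain unproved.

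\textbf{What the paper does instead.} The missing idea is to let a quotient construction do this work. The paper takes the Vinberg monoid $V_\G$, defined over $\ZZ$ as the spectrum of the Rees algebra of the canonical $\XX^\bullet(\T)^+_{\mathrm{pos}}$-filtration of $\OO(\G)$. Its formation commutes with base change because the associated graded is a sum of Schur modules. The paper pulls $V_\G$ back along the map $\AA^I_S\to\T_\ad^+$ given by the rays of $\Sigma$. It then forms the GIT quotient by $\GG^I_{\mathrm m,S}$ for a character whose stable and semistable loci both equal the nondegenerate locus. This delivers each missing property:
\begin{itemize}
\item The $\G\times_S\G$-action extends automatically, because it commutes with the $\GG^I_{\mathrm m,S}$-action.
\item Linear reductivity of tori gives compatibility with base change and quasi-projectivity.
\item Smoothness comes from saturating the monoid's big cell $\U_-\times_S\T\times_S\T_\ad^+\times_S\U_+$. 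That cell is shown open by a fiberwise Zariski-main-theorem and stabilizer argument, together with ``\'etale monomorphism $\Rightarrow$ open immersion''.
\item Properness follows from the valuative criterion. The Cartan decomposition reduces to $\lambda(\pi)\in\T(K)$, which extends to an $R$-point via the canonical section $\ol{\mf s}\colon\T_\ad^+\to V_\G$.
\end{itemize}
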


When $S$ is the spectrum of an algebraically closed field and $G$ is adjoint, it is known that the wonderful compactification satisfies the assertions of Theorem \ref{thm:main}.
To place our results in context:
Over algebraically closed fields, the wonderful compactification for adjoint reductive groups was introduced by de Concini and Procesi \cite{DP}, and extended to arbitrary characteristic by Strickland \cite{strickland}. According to \cite{shangli}, Gabber constructed it for Chevalley group schemes over $\Spec\ZZ$ in an unpublished article.

Most recently, Li \cite{shangli} constructed the wonderful compactification for adjoint reductive group schemes over arbitrary bases. 
For adjoint $G$ and $\ol G$ as in Theorem \ref{thm:main}, Li's method relies on the existence of a `big cell' in $\overline{G}$. Assuming $G$ is split, this cell is an open dense subscheme $\overline{\Omega} \subset \overline{G}$ isomorphic to the affine space $\mU^-\times_S \AA_S^{\ell}\times_S \mU^+$, where $\ell$ is the rank of $G$ and $\mU^\pm$ are the unipotent radicals of opposite Borel subgroups. Since the $G\times_S G$-translates of $\overline{\Omega}$ cover $\overline{G}$, one can expect to recover $\overline{G}$ as a suitable quotient of $G\times_S \overline{\Omega} \times_S G$.

However, it is unclear how to generalize this approach to the non-adjoint case.
Our approach is inspired by the work of Martens--Thaddeus \cite{thaddeus}, who used Vinberg monoids to construct compactifications over algebraically closed fields of characteristic $0$.
Since we are unaware of a reference, we take this opportunity to record the theory of Vinberg monoids for reductive group schemes over arbitrary bases in \S\ref{sec:vinberg}.

When $G$ is semisimple, we can do more:

\begin{theorem}\label{thm:semisimple}
    Let $G$ be a semisimple reductive group scheme over $S$. Then there exist finitely presented projective $S$-schemes $\overline{G}$ and $\overline{G}_{\mathrm{ad}}$ containing $G$ and $G_{\mathrm{ad}}$, respectively, as fiberwise-dense open subschemes, equipped with left and right actions of $G$ and $G_{\mathrm{ad}}$ extending those on $G$ and $G_{\mathrm{ad}}$ by themselves such that
    \begin{itemize}
        \item 
    there is an equivariant morphism $\overline{G} \to \overline{G}_{\mathrm{ad}}$ extending the central isogeny $G \to G_{\mathrm{ad}}$,
    \ii for each geometric point $s \in S$, $(\ol G)_s$ is the normalization of $(\ol G_\ad)_s$ in $G_s,$
        \item $\overline{G}_\ad$ is $S$-smooth and agrees with the de Concini--Procesi wonderful compactification over geometric points of $S$, and
        \item its boundary $\overline{G}_\ad \setminus G_\ad$ is the union of $S$-smooth relative effective Cartier divisors with relative normal crossings\footnote{Following \cite[Expos\'e XIII, \S2.1]{SGA1}, for an $S$-scheme $X$, we say that a relative Cartier divisor $D \subset X$ is \emph{strictly with relative normal crossings} if there exists a finite family $(f_i \in \Gamma(X, \mathcal{O}_X))_{i \in I}$ such that (1) $D = \bigcup_{i \in I} V_X(f_i)$, and (2) for every $x \in \operatorname{Supp}(D)$, $X$ is smooth at $x$ over $S$, and the closed subscheme $V((f_i)_{i \in I(x)}) \subset X$ is smooth over $S$ of codimension $|I(x)|$, where $I(x) = \{i \in I \mid f_i(x) = 0\}$. The divisor $D$ has \emph{relative normal crossings} if \'etale locally on $X$ it is strictly with relative normal crossings.}.
    \end{itemize}
\end{theorem}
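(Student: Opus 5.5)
The plan is to build $\overline{G}_{\mathrm{ad}}$ as a GIT-type quotient of the Vinberg monoid and then to obtain $\overline{G}$ by normalization. Semisimple groups are automatically isotrivial, since their central torus is trivial, so the Vinberg monoid machinery of \S\ref{sec:vinberg} applies. It yields a normal affine monoid scheme $V_G \to \AA^{\ell}$, where we work étale-locally with $G$ split and descend afterwards. Its unit group is $(G\times_S T)/Z_G$, and there is an abelianization map to a toric variety, on which $T_{\mathrm{ad}}$ acts.

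\textbf{Step 1: construct $\overline{G}_{\mathrm{ad}}$.} Following Martens--Thaddeus, I would define $\overline{G}_{\mathrm{ad}}$ as the quotient of the open subscheme $V_G^{\circ}\subset V_G$ by the torus $T$. Here $V_G^{\circ}$ consists of the points that are nondegenerate in every fiber, meaning they are not killed by any character; equivalently, they are semistable for the $T$-action with respect to a regular dominant character. Concretely, I would realize this as $\mathrm{Proj}$ of the graded ring $\bigoplus_{n}\Gamma(V_G,\mathcal O)_{n\lambda}$ with $\lambda$ regular dominant. By the Peter--Weyl/good-filtration description of $\Gamma(V_G,\mathcal O)$ over $\ZZ$ established in \S\ref{sec:vinberg}, the graded pieces are the modules $\bigoplus_{\mu\le n\lambda}(\text{Weyl module})\otimes(\text{dual Weyl module})$. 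These are flat and base-change compatible, so the $\mathrm{Proj}$ is projective, finitely presented, and its formation commutes with base change. Over a geometric point, this is the classical description of the wonderful compactification as the closure of $G_{\mathrm{ad}}$ in $\PP(\mathrm{End}\,V(\lambda))$, proved by de Concini--Procesi and Strickland. That gives the third bullet on fibers.

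\textbf{Step 2: smoothness and the boundary.} I would then prove $S$-smoothness and the normal-crossings property through the big cell. The locus where the highest-weight coordinate is invertible should be isomorphic to $\mU^-\times\AA^{\ell}\times\mU^+$, with the coordinates on $\AA^{\ell}$ given by the simple roots, so that the boundary divisors are $D_i=\{a_{\alpha_i}=0\}$. This is verified over $\ZZ$ by a flat-family argument, using fiberwise knowledge together with flatness. Since the $G\times G$-translates of the big cell cover $\overline{G}_{\mathrm{ad}}$ (this can be checked fiberwise), smoothness and strict normal crossings follow locally. The divisors $D_i$ are then the closures of the boundary strata.

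\textbf{Step 3: construct $\overline{G}$ and descend.} I would define $\overline{G}$ as the relative normalization of $\overline{G}_{\mathrm{ad}}$ in $G$ along the finite map $G\to G_{\mathrm{ad}}$. A more explicit route is to replace $V_G/T$ by $V_G^{\circ}/T'$ for a subtorus $T'$ with $T'\cap G = 1$, which makes the equivariant morphism to $\overline{G}_{\mathrm{ad}}$ automatic. The $G\times G$ action extends to the normalization by functoriality. Finiteness of the normalization gives projectivity.

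Finally, the construction is canonical, with no choice of pinning beyond $\lambda$, and $\lambda$ can be chosen Galois-stable, for instance as the sum of fundamental weights. This makes étale descent work for non-split $G$.

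\textbf{Main obstacle.} The hardest point is the second bullet: showing that the normalization commutes with passing to geometric fibers, so that $(\overline{G})_s$ is the normalization of $(\overline{G}_{\mathrm{ad}})_s$ in $G_s$. My plan is to use the explicit quotient $V_G^{\circ}/T'$. It is fiberwise normal, because $V_G$ has normal fibers (from the flat good-filtration description) and torus quotients preserve normality. It is also finite over $\overline{G}_{\mathrm{ad}}$, being a quotient by the finite group $T/T'Z$. Both properties are stable under base change, so normalization is recovered on each fiber.
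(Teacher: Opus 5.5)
Your Steps 1 and 2 hold up. Building $\ol G_{\mathrm{ad}}=V_G^\circ/T=\mathrm{Proj}\bigoplus_n\fil_{n\lambda}\OO(G)$ directly, and translating strict normal crossings off the big cell, is a legitimate variant of the paper's Cox--Vinberg GIT plus Lemma~\ref{lem:normal-crossings}. One correction: semisimple groups are in general only \emph{locally} isotrivial (SGA3, Exp.~XXIV, 4.1.5), not isotrivial. This is harmless, because your constructions are canonical and descend along arbitrary \'etale covers.

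The genuine gap is in Step 3, at exactly the point you flag as the main obstacle: the explicit model $V_G^\circ/T'$ does not exist as described. Inside the unit group $G_+=(T\times G)/Z_G$, the central torus $T$ meets $G$ in $Z_G$. If $T'\subseteq T$ is a subtorus with $T/T'$ finite, then $T'=T$, so $T'\cap G=Z_G\neq 1$ and the quotient is just $\ol G_{\mathrm{ad}}$ again. If $T'\subsetneq T$, then $V_G^\circ/T'$ has dimension $\dim G+\ell-\dim T'>\dim G$. It is then neither a compactification of $G$ nor finite over $\ol G_{\mathrm{ad}}$. Letting a torus isogenous to $T$ act through $T$ changes nothing. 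So you have no fiberwise-normal model that is finite over $\ol G_{\mathrm{ad}}$ and contains $G$, and the second bullet is unproved. Relative normalization alone cannot supply it, because it does not commute with passage to geometric fibers, which is precisely what that bullet asserts. Over a non-Nagata base it need not even be finite, so you would have to work over $\ZZ$ and base change.

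The missing idea is the paper's Cox--Vinberg base change. Let $\beta_i\in\XX_\bullet(T)$ be the primitive generators of the rays of the dominant chamber. Pull $V_G$ back along the induced finite toric map $\AA^I\to T^+_{\mathrm{ad}}$ to get $V_{G,\beta}=\AA^I\times_{T^+_{\mathrm{ad}}}V_G$. Its unit group is $\GG_{\mathrm m}^I\times_{T_{\mathrm{ad}}}G_+$. There, $\GG_{\mathrm m}^I$, acting diagonally and on $V_G$ through $\beta\colon\GG_{\mathrm m}^I\to T$, meets $G$ trivially with quotient exactly $G$. Set $\ol G=V^\circ_{G,\beta}/\GG_{\mathrm m}^I$. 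Since $V^\circ_{G,\beta}=\AA^I\times_{T^+_{\mathrm{ad}}}V^\circ_G$ is smooth over $\AA^I$, its geometric fibers are normal. The map $\AA^I\to T^+_{\mathrm{ad}}$ induces the finite birational morphism $\ol G\to\ol G_{\mathrm{ad}}$ from which the paper obtains the second bullet, via Martens--Thaddeus, Lemma 9.2.

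Alternatively, you could keep your relative normalization over $\ZZ$ and check fiberwise normality on the big cell. There it equals $\mU^-\times\ol T\times\mU^+$, where $\ol T=\Spec\ZZ[\XX^\bullet(T)\cap\QQ_{\ge0}\Delta]$ is built from a saturated monoid and so has normal fibers. You would then spread this over all of $\ol G_{\mathrm{ad}}$ using the smooth surjection $G\times\Omega\times G\to\ol G_{\mathrm{ad}}$ together with compatibility of normalization with smooth base change.
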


All our constructions commute with base change on $S$. We also verify that our construction agrees with that of \cite{shangli} for adjoint $G$ (Proposition \ref{prop:agree-with-shangli}).

Finally, we show that the isotriviality assumption in Conjecture \ref{conj:main} is essential. By analyzing the standard example of a non-isotrivial torus over a nodal rational curve given in \cite[Expos\'e X, \S1.6]{sga3}, we obtain the following negative result:

\begin{theorem}\label{thm:counterexample}
    Let $S$ be the nodal cubic curve over an algebraically closed field, and let $T$ be a non-isotrivial torus over $S$ (constructed in \S\ref{sec:bad-torus}). Then there exists no projective $S$-scheme $\overline{T}$ containing $T$ as a fiberwise-dense open subscheme such that the left action of $T$ on itself extends to $\overline{T}$.
\end{theorem}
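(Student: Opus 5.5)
We need to prove Theorem \ref{thm:counterexample}. The plan is to argue by contradiction, after recalling the shape of the torus from \cite[Expos\'e X, \S1.6]{sga3}. Let $\widetilde S \to S$ be the normalization. It is $\PP^1$ with the two points $0,\infty$ glued to the node $s_0$. The infinite étale cover $\widetilde S_\infty$ is an infinite chain of $\PP^1$'s, with deck group $\ZZ$. The torus $T$ has rank $2$ and is obtained by descending $\GG_m^2$ along this cover, using the automorphism $\sigma = \begin{pmatrix}1&1\\0&1\end{pmatrix}\in \mathrm{GL}_2(\ZZ)$. Equivalently, its character lattice is a $\ZZ^2$-local system on $S$ with monodromy $\sigma$ around the loop, which has infinite order. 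Consequently no finite étale cover splits $T$.

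Now suppose $\overline T$ is a projective $S$-scheme with a $T$-action extending translation, with $T\subset \overline T$ fiberwise dense.

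\begin{enumerate}
\item We pull back along the étale map $\widetilde S_\infty \to S$, or along the generic point. Over the function field $K$ of $S$, $T_K$ is split, since the local system is trivial over $\widetilde S\setminus\{0,\infty\}$. So $\overline T_K$ is a projective equivariant compactification of $\GG_{m,K}^2$.

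\item We normalize. Replacing $\overline T$ by its normalization, which is still projective and carries the action because the action map is compatible with normalization, we may assume $\overline T$ is normal. Its generic fiber is then a normal projective toric variety. Over the geometric generic point, and after spreading out, this gives a fan $\Sigma$ in $N_\RR=\RR^2$: a complete fan, since the variety is projective.

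\item This step carries the key point. The cocharacter lattice $N$ is the dual of the local system. We compare the fan data along the loop. Over each point of $\widetilde S\setminus\{0,\infty\}$, the fiber of the pullback of $\overline T$ is an equivariant compactification of the split torus. The torus-orbit structure of the generic fiber, namely the boundary divisors corresponding to valuations of $K(T)$ trivial on $K$, gives a finite set of rays in $N$. The gluing at the node identifies $N$ at $0$ with $N$ at $\infty$ via $\sigma$. These rays are intrinsic: they are the $T_K$-invariant divisorial valuations centered on the boundary, and this set is finite. The set must be a finite subset of $N$ that is defined globally over $S$, hence stable under the monodromy $\sigma$. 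More precisely, the orbit of a ray under $\sigma$ must stay finite.

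\item We use the dynamics of $\sigma$. The only rays of $\RR^2$ with finite $\sigma$-orbit are $\pm e_1$, which are fixed. A complete fan needs rays not contained in one line, since it must contain a ray with positive $e_2$-coordinate. Such a ray has infinite orbit under $\sigma$: $\sigma^n(a,b)=(a+nb,b)$. This contradicts finiteness, which proves the theorem.
\end{enumerate}

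The main obstacle is step 3: we must make rigorous that the set of boundary rays is monodromy-invariant. There are two routes. The first is to work over the strict henselization at the node. There are two branches, and over each the torus is split, but the two trivializations differ by $\sigma$. The boundary divisors of $\overline T$ restricted to the generic points of the two branches must define the same finite set of $T$-invariant valuations on the function field of $T$ over the node's local ring, expressed in two bases related by $\sigma$. Hence the fan is $\sigma$-stable.

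The second route may be cleaner. Pass to the finite étale cover $S_n$, the $n$-gon, which has deck group $\ZZ/n$; the pullback torus has monodromy $\sigma^n$. A single $\overline T$ gives a fan stable under $\sigma^n$ for all $n$. This is impossible even for one $n\ge1$, by step 4.

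Projectivity, as opposed to mere properness, is used only to get completeness and finiteness of the fan. One also needs properness of each fiber to ensure the rays span, so that some ray is not fixed by $\sigma$.
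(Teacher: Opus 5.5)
Steps 1, 2 and 4 are fine as far as they go, and step 4 is exactly the paper's final contradiction. The gap is step 3, which you correctly identify as the crux: it is asserted rather than proved, and neither of your routes proves it.

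The fan $\Sigma$ is an invariant of the generic fiber, which sits over a single point of $S$. The monodromy $\sigma$ can only be detected through the fiber $\overline T_{s_0}$ over the node. What you actually need is a specialization statement: the normalization of $\overline T_{s_0}$ is the toric variety $X(\Sigma)$ both when its torus is identified with $\GG_{\mathrm m,k}^2$ via the branch at $0$ and when it is identified via the branch at $\infty$. These two identifications differ by $\sigma$, and a normal toric variety determines its fan, so $\sigma(\Sigma)=\Sigma$ would follow.

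This specialization statement is not formal. For general equivariant degenerations over a DVR (Mumford's toric degenerations), the special fiber is reducible and the component through the torus has a fan different from the generic one.
\begin{itemize}
\item Route 1 does not supply it. The strict henselization at the node has two minimal primes, so there is no single ``function field of $T$'' carrying the valuations from both branches. The two branch generic fibers are schemes over different fields, and the only thing linking them is the common closed fiber, which is the missing statement again.
\item Route 2 is circular, since it needs step 3 on the $n$-gon.
\item Once you normalize the total space in step 2, it lives over $\PP^1_k$ and the two branches are separated. The gluing can then only be used by returning to the original node fiber.
\end{itemize}
Your argument also never uses that $T$ is \emph{fiberwise} dense over the node, and that is precisely the hypothesis that makes the specialization statement true.

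Here is how the paper closes the gap.
\begin{enumerate}
\item Pull back to $\PP^1_k$ and localize at $0$ and at $\infty$.
\item Produce a cover by torus-stable affine opens. The paper does this with Brion's equivariant finite \'etale covers, after checking that such a cover is trivial over the torus.
\item Using fiberwise density, show that the normalization of each local family has irreducible, normal special fiber.
\item Apply Mumford's classification of normal toric schemes over a DVR. There, components of the special fiber correspond to vertices of $\Sigma_i\cap(\RR^2\times\{1\})$. Irreducibility forces every ray other than $\{0\}\times\RR_{\ge 0}$ into $\RR^2\times\{0\}$, so the family is constant and its special fiber is $X(\Sigma)$.
\item That special fiber maps finitely and birationally onto the node fiber, hence is its normalization. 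Comparing the two branches gives $\sigma(\Sigma)=\Sigma$ (the paper's $A(\Sigma)=\Sigma$), after which your step 4 applies.
\end{enumerate}
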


The main difficulty in the proof of Theorem \ref{thm:counterexample} is that a potential compactification $\overline{T} \to S$ need not have normal fibers, preventing the direct application of the classification theory of toric varieties. We overcome this by using Brion's results on the linearization of line bundles \cite{brion} to reduce the problem to the setting of toric schemes over discrete valuation rings, where we derive a contradiction.

\begin{remark}
    Although we work with base schemes, it is easily seen that proofs of Theorems \ref{thm:main} and \ref{thm:semisimple} go through even when $S$ is an algebraic space.
\end{remark}

\subsection*{Acknowledgments} 
The author is grateful to Bjorn Poonen and K\k{e}stutis
\v{C}esnavi\v{c}ius for helpful conversations.
The author also thanks Shang Li and Arnab Kundu for their comments on earlier drafts.
This work was supported in part by Simons Foundation grant \#402472 to Bjorn Poonen.

\section{Affine monoids}\label{sec:vinberg}
\subsection{Vinberg monoids} 
Let $\G$ be a split reductive group scheme over a connected base scheme $S$.
Let $\T$ be the abstract Cartan-- 
this may be defined as $\B/\mathrm R_{\mathrm u}(\B)$ for any $S$-Borel $\B\subset \G$. The key point is that this does not depend on the choice of $\B$.
Define $\G_+ \colonequals \T \times^{Z_\G}_S \G$ and $\T_\ad \colonequals \T/Z_\G$, where $Z_\G$ denotes the center of $\G$.
Let $\Delta\subset \XX^\bullet(\T_\ad)$ be the set of simple roots.
Note that $\Delta$ is canonically defined as a {\em subset} of $\XX^\bullet(\T_\ad)$.
Let $\T_\ad \inj \T_\ad^+ \colonequals \Spec \OO_S[\XX^\bullet(\T_\ad)_{\text{pos}}]$ be the obvious toric embedding where $\XX^\bullet(\T_\ad)_{\text{pos}}$ is the submonoid generated by $\Delta.$
Here, $\T_\ad^+$ is viewed as an $S$-monoid with unit group $\T_\ad.$
The {\em Vinberg monoid} is a certain reductive monoid scheme $V_\G$ over $S$ equipped with an $S$-monoid homomorphism $\mf a \colon V_\G \to
\T_{\ad}^+$ called the {\em abelianization}.
It is true that $\T^+_{\ad}$ is the largest commutative monoid quotient of $V_\G$ in a precise sense.
The properties relevant to us are recorded in Theorem \ref{thm:vinberg-main}.

Let us briefly recall the construction of the Vinberg monoid given in \cite[\S3.2]{zhu} over an algebraically closed field.
The same
    works over $\ZZ$ and hence for any split reductive group over
    an arbitrary base. Assume $S = \Spec \ZZ$ without any loss of generality.
Let $\XX^\bullet(\T)^+$ be the submonoid of dominant characters, and let $\XX^\bullet(\T)^+_{\text{pos}}$ be the submonoid generated by $\XX^\bullet(\T)^+$ and $\XX^\bullet(\T_\ad)_{\text{pos}}$.
We equip $\mathbb{X}^\bullet(\mathbf{T})$ with the partial order $\preceq$ defined by $\lambda \preceq \mu$ if $\mu - \lambda \in \mathbb{X}^\bullet(\mathbf{T}_{\mathrm{ad}})_{\mathrm{pos}}$.
We write $\lambda \prec \mu$ if $\lambda \preceq \mu$ and $\lambda \ne \mu$.

The coordinate ring $\OO(\G)$ admits a canonical multi-filtration indexed by $\XX^\bullet(\T)_{\text{pos}}^+$, induced by the $\G\times \G$ action on $\OO(\G)$ via left and right translation, given by setting $\fil_\nu \OO(\G),\, \nu \in \XX^\bullet(\T)_{\text{pos}}^+$, as the maximal $\G\times \G$-submodule of $\OO(\G)$ such that all its weights $(\lambda, \lambda') \in \XX^\bullet(\T) \times \XX^\bullet(\T)$ satisfy $\lambda \preceq -w_0(\nu)$ and $\lambda' \preceq \nu$. 
Here, $w_0$ is the longest element of the Weyl group of $\G.$
Each piece $\fil_\nu \OO(\G)$ is finite free over $\ZZ$ and the associated graded is given by 
\[\operatorname{gr} \OO(\G) \colonequals \bigoplus_{\nu \in \XX^\bullet(\T)^+_{\text{pos}}} \frac{\fil_{\nu} \OO(\G)}{\sum_{\lambda \prec \nu} \fil_\lambda \OO(\G)} =  \bigoplus_{\nu \in \XX^\bullet(\T)^+} \mathrm S_{-w_0(\nu)}\tensor \mathrm S_{\nu},\]
where $\mathrm S_\nu$ denotes the Schur module of highest weight $\nu$, i.e., the induced $\G$-module from the character $-\nu$ of $\B$ \cite[Lemma 3.2.1 (4)]{zhu}.

The Vinberg monoid $V_{\G}$ is defined as the spectrum of the Rees algebra associated to this filtration:
\[
    V_{\G} \colonequals \Spec\left( \bigoplus_{\nu \in \XX^\bullet(\T)_{\text{pos}}^+} \fil_\nu \OO(\G)\right),
\]
endowed with the natural (co)multiplication map.
It is an affine monoid $\ZZ$-scheme of finite type which admits 
a monoid homomorphism $\mf a: V_{\mathbf{G}} \to \mathbf{T}_{\mathrm{ad}}^+ \colonequals \Spec \mathbb{Z}[\mathbb{X}^\bullet(\mathbf{T}_{\mathrm{ad}})_{\mathrm{pos}}]$ induced by the inclusion of the ring $\mathbb{Z}[\mathbb{X}^\bullet(\mathbf{T}_{\mathrm{ad}})_{\mathrm{pos}}]$ into the Rees algebra.
The formation of $(V_\G, \mf a)$ commutes with base change on $S$. Indeed, for any ring $R$, the injection of filtered $R$-algebras $R\tensor \sum_{\nu}\fil_\nu \OO(\G) \inj \sum_{\nu}\fil_\nu \OO(\G_R)$ induces an isomorphism on associated graded algebras since the formation of Schur modules commutes with base change. Thus, $R\tensor \sum_{\nu}\fil_\nu \OO(\G) \inj \sum_{\nu}\fil_\nu \OO(\G_R)$ is actually an isomorphism of filtered rings.

\begin{theorem}\label{thm:vinberg-main}
    The $S$-monoid $V_\G$ fits into a Cartesian diagram
\[\begin{tikzcd}
    {\G_+} & {V_{\G}} \\
    {\T_{\ad}} & \T_\ad^+
	\arrow[hook, from=1-1, to=1-2]
	\arrow[from=1-1, to=2-1]
	\arrow["{\mf a}", from=1-2, to=2-2]
	\arrow[hook, from=2-1, to=2-2]
\end{tikzcd}\]
and
    \begin{itemize}
        \ii $\G_+$ is the unit group of $V_\G$,
        \ii the $\T\times_S \G \times_S \G$ action on $\G_+$ given by $\T$
        acting on the first component and $\G$ acting on itself by left and
        right multiplication extends to an action
        on $V_\G$,
        \ii $\mf a$ is faithfully flat and equivariant.
    \end{itemize}
\end{theorem}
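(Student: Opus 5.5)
Since $V_\G$ and $\mf a$ are defined over $\ZZ$ and their formation commutes with base change, we reduce to $S=\Spec\ZZ$ and work with the Rees algebra $A\colonequals\bigoplus_{\nu\in\XX^\bullet(\T)^+_{\text{pos}}}\fil_\nu\OO(\G)\,t^\nu$, where $t^\nu$ records the grading. We treat the three assertions in order: the Cartesian square, the actions, and faithful flatness. The unit-group claim then follows from the Cartesian square.

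\emph{Cartesian square.} The fiber of $\mf a$ over $\T_\ad$ is $\Spec$ of $A\tensor_{\ZZ[\XX^\bullet(\T_\ad)_{\text{pos}}]}\ZZ[\XX^\bullet(\T_\ad)]$, i.e.\ $A$ with the elements $t^\alpha$, $\alpha\in\Delta$, inverted. After inverting these, the filtration becomes exhaustive in every direction. The localization should therefore be $\bigoplus_{\nu\in\XX^\bullet(\T)}\OO(\G)\,t^\nu$, with the submonoid $\XX^\bullet(\T)^+_{\text{pos}}$ generating $\XX^\bullet(\T)$ as a group. Using $\bigcup_\nu\fil_\nu\OO(\G)=\OO(\G)$, which follows from the gr computation, we identify this with $\OO(\G)\tensor\ZZ[\XX^\bullet(\T)]$ restricted to the $Z_\G$-invariants for the diagonal action. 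Here $Z_\G$ acts on $\fil_\nu$ through the central character, and that central character is determined by the class of $\nu$ modulo the root lattice. This ring is exactly $\OO(\T\times^{Z_\G}\G)=\OO(\G_+)$, and the map to $\T_\ad$ corresponds to $\ZZ[\XX^\bullet(\T_\ad)]\subset\ZZ[\XX^\bullet(\T)]$.

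\emph{Monoid structure and actions.} The comultiplication of $\OO(\G)$ sends $\fil_\nu$ into $\fil_\nu\tensor\fil_\nu$, by the weight description using left and right weights. Hence $A$ inherits a comultiplication, and $\mf a$ is a monoid map. The $\G\times\G$-action preserves each $\fil_\nu$, since these are submodules. The $\T$-action is given by the grading: $t^\nu$ has weight $\nu$. These actions extend those on $\G_+$ because $A\subset\OO(\G_+)$ is compatible with all of them, and equivariance of $\mf a$ follows since $t^\alpha$ carries weight $\alpha$.

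\emph{Flatness and units.} Flatness of $A$ over $R\colonequals\ZZ[\XX^\bullet(\T_\ad)_{\text{pos}}]$ is the expected main obstacle. The plan is a Rees-algebra degeneration argument. Each $\fil_\nu$ is finite free with free graded pieces, so we choose a basis of $A$ adapted to the filtration. We would use good filtrations: the pieces $\mathrm S_{-w_0\mu}\tensor\mathrm S_\mu$ over $\ZZ$, which are free, combined with the fact that $\fil_\nu=\sum_{\mu\preceq\nu,\ \mu\text{ dominant}}(\text{lift of }\mathrm S_{-w_0\mu}\tensor\mathrm S_\mu)$. From this we aim to show that $A$ is a free $R$-module with basis $\{\text{lifts of a basis of }\mathrm S_{-w_0\mu}\tensor\mathrm S_\mu\}\cdot t^\mu$, $\mu\in\XX^\bullet(\T)^+$. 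Concretely, $A\cong\bigoplus_{\mu\in\XX^\bullet(\T)^+}(\mathrm S_{-w_0\mu}\tensor\mathrm S_\mu)\tensor R$, using that every $\nu\in\XX^\bullet(\T)^+_{\text{pos}}$ decomposes as $\mu+\beta$ with $\beta\in\XX^\bullet(\T_\ad)_{\text{pos}}$ along the dominant $\mu\preceq\nu$. The delicate point is that this decomposition is a direct-sum statement: distinct dominant $\mu$ have independent contributions. We would deduce it by induction on $\preceq$ from the gr formula \cite[Lemma 3.2.1 (4)]{zhu}. Freeness gives flatness. Surjectivity on points follows because the fiber over the most degenerate point $0\in\T_\ad^+$ has coordinate ring $\operatorname{gr}\OO(\G)\neq 0$, and likewise over every torus-orbit stratum, so $\mf a$ is faithfully flat.

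Finally, for the unit group: a unit of $V_\G$ maps to a unit of $\T_\ad^+$, and the unit group of $\T_\ad^+$ is $\T_\ad$. By the Cartesian square the unit lies in $\G_+$. Conversely, $\G_+$ is a group, so its points are units.
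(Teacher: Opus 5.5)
The paper gives no argument for this theorem; it only cites \cite[Proposition 3.2.2]{zhu}. Your Rees-algebra argument is therefore a self-contained reconstruction. Most of it is fine in outline: the localization at the $t^\alpha$, the comultiplication and the actions, and the unit group. There is, however, a genuine gap at exactly the step you flag as delicate.

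Write $\pi_\nu=\{\mu\in\XX^\bullet(\T)^+ : \mu\preceq\nu\}$, and let $L_\mu\subset\fil_\mu\OO(\G)$ be your lift of $\mathrm S_{-w_0\mu}\tensor\mathrm S_\mu$. Induction on $\preceq$ using the gr formula shows only that the natural map $\bigoplus_{\mu\in\pi_\nu}L_\mu\to\fil_\nu\OO(\G)$ is \emph{surjective}. It cannot give injectivity. Because $\preceq$ is only a partial order, $\sum_{\lambda\prec\nu}\fil_\lambda\OO(\G)$ is a sum of pieces with mutually incomparable indices. Whether that sum is direct depends on the intersections $\fil_\lambda\OO(\G)\cap\fil_{\lambda'}\OO(\G)$. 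The gr formula describes only one quotient $\fil_\nu/\sum_{\lambda\prec\nu}\fil_\lambda$ at a time and says nothing about these intersections. Your freeness claim $A\simeq\bigoplus_{\mu}L_\mu\tensor_\ZZ R$ is, degree by degree, exactly this directness. So as written, flatness of $\mf a$, the one nontrivial assertion, is not established.

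A simple fix is a rank count. First, $\fil_\nu\OO(\G)\tensor\QQ=\fil_\nu\OO(\G_\QQ)$. In characteristic $0$, $\mathrm S_\mu$ is irreducible with dual $\mathrm S_{-w_0\mu}$, so Peter--Weyl gives $\fil_\nu\OO(\G_\QQ)=\bigoplus_{\mu\in\pi_\nu}(\mathrm S_{-w_0\mu}\tensor\mathrm S_\mu)\tensor\QQ$. Hence the source and target of the surjection above are finite free $\ZZ$-modules of the same rank, so the surjection is an isomorphism. Alternatively, use the intersection identity $\fil_\lambda\OO(\G)\cap\fil_{\lambda'}\OO(\G)=\OO_{\pi_\lambda\cap\pi_{\lambda'}}(\G)$, which is immediate from the definition as maximal submodules, together with good filtrations of $\OO_\pi(\G)$ for arbitrary saturated sets $\pi$. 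With directness in hand, freeness and faithful flatness follow as you say.

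Two smaller corrections in the Cartesian-square step:
\begin{itemize}
\item Exhaustiveness $\bigcup_\nu\fil_\nu\OO(\G)=\OO(\G)$ comes from local finiteness of the $\G\times\G$-comodule $\OO(\G)$, not from the gr formula.
\item The degree-$\nu$ part of the localization is the full $Z_\G$-isotypic piece because $\nu+\ZZ_{\ge 0}\Delta$ is cofinal for $\preceq$ in $\nu+\ZZ\Delta$ (add large multiples of the sum of the positive roots). This cofinality is the property needed, not the fact that $\XX^\bullet(\T)^+_{\text{pos}}$ generates $\XX^\bullet(\T)$.
\end{itemize}
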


\begin{proof}
        See \cite[Proposition 3.2.2]{zhu}.
\end{proof}

\subsection{Nondegenerate
locus}\label{sec:nondegenerate-locus}
Choose an $S$-Borel $\B$ and a Cartan {\em sub}group $c\colon \T \inj \B$.
Let $\U_+$ be the unipotent radical of $\B$ and $\U_-$ be that of the
opposite Borel.

\begin{proposition}
There is a {\em canonical section} $\ol{\mf s}\colon \T_\ad^+ \to V_\G$
extending $\mf s \colon t
\mapsto (t,c(t)) \Mod{Z_\G}$ on respective unit groups.
\end{proposition}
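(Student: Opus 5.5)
Since $V_\G$ is affine and $\T_\ad^+$ is integral (hence $\T_\ad$ is schematically dense in it), any extension of $\mf s$ is unique. So the problem is purely about existence, which I would settle on coordinate rings. Work over $S=\Spec\ZZ$ (enough, since $V_\G$ and $\mf a$ commute with base change). I must show that the ring map
\[
\mf s^*\colon \OO(\G_+)=\bigl(\OO(\T)\otimes\OO(\G)\bigr)^{Z_\G}\to \OO(\T_\ad)=\ZZ[\XX^\bullet(\T_\ad)]
\]
carries the Rees algebra $\bigoplus_\nu \fil_\nu\OO(\G)$ into $\ZZ[\XX^\bullet(\T_\ad)_{\mathrm{pos}}]$.

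First I identify how the Rees algebra sits inside $\OO(\G_+)$. The Cartesian square of Theorem \ref{thm:vinberg-main} shows that the open immersion $\G_+\inj V_\G$ corresponds to the embedding sending $f\in\fil_\nu\OO(\G)$ in degree $\nu$ to $e^{\nu}\otimes f$. Here $e^\nu$ is the character $\nu$ of $\T$, and the pair is $Z_\G$-invariant because $Z_\G$ acts on $\fil_\nu$ through the central character $\nu|_{Z_\G}$. Pulling back along $t\mapsto (t,c(t))$ gives
\[
\mf s^*(e^\nu\otimes f)=e^{\nu}\cdot (c^*f) \in \ZZ[\XX^\bullet(\T)],
\]
and this lands in $\ZZ[\XX^\bullet(\T_\ad)]$ automatically.

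Next I expand $c^*f$ in $\T$-weights. The function $c^*f$ is the restriction of $f$ to $\T\subset\G$. For the $\T\times\T$-weight decomposition of $f$, with $f$ of weight $(\lambda,\lambda')$ under left and right translation, restricting to the diagonal torus keeps only certain weight components. It produces monomials $e^{\mu}$ where $\mu$ is a weight of the form $\lambda'$, using the convention that the right weight records the character. The definition of $\fil_\nu$ gives $\lambda'\preceq\nu$ and $\lambda\preceq -w_0\nu$. Being careful with signs (right translation weight versus value on $\T$), I expect each monomial $e^{\nu}e^{\mu}$ occurring in $\mf s^*$ to be $e^{\nu-\lambda''}$ for some weight $\lambda''\preceq\nu$ of a $\G$-module with highest weight at most $\nu$. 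Hence the exponent $\nu-\lambda''$ lies in $\XX^\bullet(\T_\ad)_{\mathrm{pos}}$, so $\mf s^*$ lands in $\ZZ[\XX^\bullet(\T_\ad)_{\mathrm{pos}}]$, which gives $\ol{\mf s}$.

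The main obstacle is this sign/convention bookkeeping. It has to be arranged so that the twist by $e^\nu$ (possibly $e^{-w_0\nu}$, depending on which factor $\T$ acts through) exactly cancels the top weight and leaves nonnegative combinations of simple roots. To keep this honest, I would check it on the basis of matrix coefficients $v^*\otimes v$ of $\mathrm S_\nu$: on $t\in\T$ these restrict to $\langle v^*, t v\rangle$, which is nonzero only for weight vectors of equal weight $\mu$ with $\mu\preceq\nu$. Since $\fil_\nu$ is spanned, up to the filtration, by such coefficients for highest weights $\preceq\nu$ (via the associated graded description), the claim follows. If integrality over $\ZZ$ of this spanning is an issue, I would instead argue directly with $\T$-weights of the $\G\times\G$-module $\fil_\nu$, which only uses its definition. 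The homomorphism property of $\ol{\mf s}$ then follows by density from that of $\mf s$, and $\mf a\circ\ol{\mf s}=\mathrm{id}$ likewise.
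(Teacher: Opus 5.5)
Your argument is correct and is organized differently from the paper's, though both rest on the same input. That input is that restriction to $\T$ sends $\fil_\nu\OO(\G)$ into the span of the $e^{\mu}$ with $\mu\preceq\nu$. This follows at once from the right-weight condition in the definition of $\fil_\nu$, which is your ``direct $\T$-weights'' option; the matrix-coefficient detour and its integrality worry can be dropped.

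The paper writes down a candidate map $\phi$ by hand, keeping only the coefficient of the top weight $e^\nu$, and proves multiplicativity by a leading-term argument. You instead take the actual pullback $\mf s^*$ and only check that it preserves the positive parts. Multiplicativity, compatibility with $\mf s$, $\mf a\circ\ol{\mf s}=\mathrm{id}$ and the homomorphism property are then automatic by density. You also get uniqueness, which is what justifies the word ``canonical''.

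This buys more than economy. Read literally, the paper's top-coefficient map sends $1\in\fil_\alpha\OO(\G)$, i.e.\ $\mf a^*(e^\alpha)$, to $0$ for $\alpha\neq0$, because $r(1)=e^0$. So it is not a section of $\mf a$. For $\SL_2$ with $V_\G=\Mat_{2\times2}$ it is $a\mapsto e^{\varpi}$, $b,c,d\mapsto 0$, which is a map into $\{\det=0\}$. What is needed is the full twisted restriction, and that is your map.

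The step you left as ``I expect'' does need to be carried out, because as written it is off. With the normalization $f\mapsto e^\nu\otimes f$, invariance forces $Z_\G$ to act antidiagonally, $(t,g)\mapsto(tz,z^{-1}g)$. For that quotient, $t\mapsto(t,c(t))$ does not descend to $\T_\ad$. Correspondingly, $e^\nu\cdot c^*f$ has exponents $\nu+\mu\equiv 2\nu$ modulo the root lattice (e.g.\ $2\varpi_1$ for $\SL_3$), so it does not land in $\ZZ[\XX^\bullet(\T_\ad)]$ ``automatically''.

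The map that descends is $t\mapsto(t,c(t)^{-1})$. A component $f$ of right weight $\lambda'$ then pulls back to $f(1)\,e^{\nu-\lambda'}$, and $\nu-\lambda'\in\XX^\bullet(\T_\ad)_{\mathrm{pos}}$ because $\lambda'\preceq\nu$. This is exactly your expected $e^{\nu-\lambda''}$.

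Alternatively, you can keep $t\mapsto(t,c(t))$, let $Z_\G$ act diagonally, and normalize by $e^{-w_0\nu}\otimes f$, as you suggest. That also works, using the left-weight bound $\lambda\preceq -w_0\nu$ instead. Positivity genuinely depends on this choice: for instance, $e^{-\nu}\otimes f$ with $t\mapsto(t,c(t))$ gives exponents $\lambda'-\nu\preceq 0$, i.e.\ the opposite toric embedding. So the normalization should be fixed explicitly rather than left as an expectation.
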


\begin{proof}
We identify $\T$ with the image of $c$ and assume $S = \Spec \ZZ$. The restriction ring map $r: \OO(\G) \to \OO(\T)$ sends the filtered piece $\fil_\nu \OO(\G)$ into the span of characters $e^\lambda$ satisfying $\lambda \preceq \nu$. Here, $e^\lambda$ denotes the character corresponding to $\lambda$.

We define a retraction $\phi: \OO(V_{\G}) \to \OO(\T_{\ad}^+)$ by mapping a homogeneous element $f_\nu \in \fil_\nu \OO(\G)$ to the coefficient of its highest weight term. Explicitly, we define:
$$
\phi(f_\nu) = (\text{coefficient of } e^\nu \text{ in } r(f_\nu)) \cdot e^\nu.
$$
This map is multiplicative.
Consider homogeneous elements $f \in \fil_\nu \OO(\G)$ and $g \in \fil_\mu \OO(\G)$. Their product $fg$ lies in $\fil_{\nu+\mu} \OO(\G)$. The restriction map is a ring homomorphism, so $r(fg) = r(f)r(g)$. Since $r(f)$ involves only weights $\lambda \preceq \nu$ and $r(g)$ involves only weights $\lambda' \preceq \mu$, the weight $\nu+\mu$ in the product $r(f)r(g)$ can only arise from the product of the term $e^\nu$ in $r(f)$ and $e^\mu$ in $r(g)$. Indeed, if $\lambda + \lambda' = \nu + \mu$ with $\lambda \preceq \nu$ and $\lambda' \preceq \mu$, then $(\nu - \lambda) + (\mu - \lambda') = 0$. Since $\nu - \lambda$ and $\mu - \lambda'$ are non-negative integer linear combinations of simple roots, this implies $\lambda = \nu$ and $\lambda' = \mu$.

Thus, $\phi(fg) = \phi(f)\phi(g)$. Since $\phi$ maps the element $1 \cdot e^\nu \in \OO(V_{\G})$ to $e^\nu \in \OO(\T_{\ad}^+)$, it forms the required section $\ol{\mf s}$ of the abelianization.
\end{proof}

\begin{proposition}\label{prop:big-cell}
    The multiplication map $m\colon \U_-\times_S \T \times_S \T_\ad^+ \times_S
    \U_+ \to V_\G$
    given by $(u_-, t, a, u_+) \mapsto u_-t\ol{\mf s}(a)u_+$ is an open embedding.
\end{proposition}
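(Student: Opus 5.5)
We reduce to $S=\Spec\ZZ$, since the source, the target and $m$ are all compatible with base change. Both sides are flat of finite type over $\ZZ$. The source is smooth of relative dimension $\dim\G+\operatorname{rank}$. By Theorem \ref{thm:vinberg-main}, $V_\G$ is faithfully flat over $\T_\ad^+$ with generic fiber $\G$, so it has the same relative dimension. The plan is to identify the image of $m$ with an explicit principal open subset of $V_\G$ and to show that $m$ is an isomorphism onto it, by writing down an inverse on coordinate rings.

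The first step is to construct the open. Let $\varpi$ be a character of $\T$ that is dominant and regular enough (e.g.\ a sum of fundamental weights after passing to a suitable cover, or $2\rho$-type), together with a highest weight vector $f_{\nu}\in\fil_\nu\OO(\G)$ in $\mathrm S_{-w_0\nu}\otimes\mathrm S_\nu$. Concretely, $f_\nu$ is the matrix coefficient $g\mapsto\langle v^*_{\text{low}}, g\,v_{\text{high}}\rangle$, which is $\U_-\times\U_+$-semi-invariant, i.e.\ left $\U_-$- and right $\U_+$-invariant. Viewed as a degree-$\nu$ element of the Rees algebra, $f_\nu$ restricts along $(u_-,t,a,u_+)\mapsto u_-t\ol{\mf s}(a)u_+$ to a unit times $e^{\nu}(t)$, using the formula for $\phi$ in the previous proof. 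For the classical big cell $\G_{f}=\U_-\T\U_+$ one uses $f=\prod f_{\omega_i}$ (or $f_\nu$ for regular $\nu$). The claim is that $m$ is an isomorphism onto the principal open $D(f_\nu)\subset V_\G$, where $f_\nu$ is placed in degree $\nu$.

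To invert $m$ on $D(f_\nu)$, we define the $\T\times\T_\ad^+$ coordinates first. The abelianization $\mf a$ gives $a$. For $t$, we use the degree-$\mu$ highest weight coefficients $f_\mu$ for dominant $\mu$: their ratios with powers of $f_\nu$ recover $e^\mu(t)$, and dominant weights generate $\XX^\bullet(\T)$ as a group up to the issue of $Z_\G$, which is handled by the $\T$-factor of $\G_+$. Then the $\U_\pm$ coordinates are recovered as in the classical proof that $\U_-\T\U_+\cong$ big cell. In $\mathrm S_\nu$, the entries $\langle v^*_{\text{low}}, g\,v\rangle/f_\nu$ for a basis $v$ give the $\U_+$-coordinates, and dually for $\U_-$. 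Concretely, these are degree-$\nu$ elements of the Rees algebra divided by $f_\nu$, so they are regular on $D(f_\nu)$. They are equivariant for the $\U_-\times\U_+$ action, which extends to $V_\G$ by Theorem \ref{thm:vinberg-main}. This yields a morphism $D(f_\nu)\to\U_-\times\T\times\T_\ad^+\times\U_+$. Both composites are the identity on the dense open unit group $\G_+$, where this is the classical big cell statement for $\G_+=\T\times^{Z_\G}\G$.

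To finish, the composite $D(f_\nu)\to$ source $\to V_\G$ agrees with the inclusion on $\G_+$. Since $V_\G$ is flat over $\ZZ$ and integral (its Rees algebra sits inside $\OO(\G)[\XX^\bullet(\T)]$), equality on a dense open gives equality everywhere. The other composite agrees with the identity on a dense open of the integral source, so it is the identity as well. One must also check that $m$ lands in $D(f_\nu)$, which is the computation above giving $m^*f_\nu=e^\nu(t)\cdot(\text{unit})$.

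The main obstacle is the boundary fibers over $\T_\ad^+\setminus\T_\ad$. There one must verify that the inverse coordinate functions really lie in the Rees algebra, i.e.\ that the relevant matrix coefficients lie in $\fil_\nu$ and not merely in $\OO(\G)$, and that $m^*f_\nu$ is exactly $e^\nu(t)$ with no extra $a$-dependence. I would check this using the weight description of $\fil_\nu$ together with the highest-weight-coefficient formula for $\ol{\mf s}$ from the preceding proposition.
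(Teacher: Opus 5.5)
Your argument is correct in outline, and it takes a genuinely different route from the paper.

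\textbf{How the routes differ.} The paper works on geometric fibers. It imports normality of $V_\G$ from the literature, uses Zariski's main theorem to reduce to injectivity of $m$ on $k$-points, and proves injectivity from the description of the stabilizer of $\ol{\mf s}(e_I)$ in the cited appendix. It then passes to general $S$ by the fibral criterion for \'etaleness together with the fact that an \'etale monomorphism is an open immersion. You instead stay over $\ZZ$, write down an inverse on the principal open $D(f_\nu)$, and conclude by schematic density: of $\G_+$ in the integral scheme $V_\G$, and of $m^{-1}(\G_+)$ in the integral source. This avoids normality of $V_\G$ (a nontrivial input in positive characteristic), Zariski's main theorem and the stabilizer computation. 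It also gives the sharper statement that the image of $m$ is exactly $D(f_\nu)$. The price is some representation theory over $\ZZ$.

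\textbf{The obstacle you flag is routine.} The matrix coefficients you use span $\G\times\G$-submodules of $\OO(\G)$ whose weights are bounded by $(-w_0\nu,\nu)$, so they lie in $\fil_\nu$ by maximality. All the pullbacks along $m$ (of $f_\nu$, of the $f_\mu$, of your ratios) can be computed on $m^{-1}(\G_+)$ and extended by density. That settles independence of $a$ without relying on the paper's formula for $\phi$, which should not be read literally: $e^\nu$ is not a function on $\T_\ad^+$ when $\nu$ lies outside the root lattice.

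\textbf{The step that does need an argument.} You must show your ratios define a morphism $D(f_\nu)\to\U_\pm$ at all. This requires the extremal orbit map $\U_\pm\to\AA^N_\ZZ$ to be a closed immersion. Once that holds, $D(f_\nu)\to\AA^N$ factors through $\U_\pm$, because it does so on the dense open $D(f_\nu)\cap\G_+$ and $D(f_\nu)$ is reduced.
\begin{itemize}
\item For regular $\nu$ this is true when the orbit is taken in a Weyl module. It follows from very ampleness of $\mathcal L(\nu)$ on $\G/\B$ over $\ZZ$ (checked fiberwise) and the fact that the big cell is the non-vanishing locus of the extremal coordinate.
\item It fails in the induced module $\mathrm S_\nu$. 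For $\SL_2$ and $\nu=2\omega$ (your ``$2\rho$-type'' choice), the $\U_-$-orbit of the highest weight vector of $\mathrm{Sym}^2$ is $x\mapsto(1,2x,x^2)$, which is not a closed immersion at $p=2$.
\end{itemize}

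Your $\U_+$-coordinates $\langle v^*_{\mathrm{low}},g\,v\rangle$, with $v$ running over a basis of $\mathrm S_\nu$, record the orbit of $v^*_{\mathrm{low}}$ in the Weyl module $\mathrm S_\nu^*$, so they are fine. But ``dually for $\U_-$'' must use matrix coefficients $\langle w^*,g\,v_{\mathrm{high}}\rangle$ with $v_{\mathrm{high}}$ in the Weyl module of highest weight $\nu$, not in $\mathrm S_\nu$. These still lie in $\fil_\nu$ by the same maximality argument, so the fix costs nothing.
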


\begin{proof}
    We first prove this in the case when $S$ is the spectrum of an algebraically closed field $k$. 
    By \cite[\S7.5]{stable}, $V_\G$ is a normal irreducible $k$-variety.
    Therefore, $m$ is a birational morphism of normal irreducible $k$-varieties.
    By Zariski's main theorem, it suffices to verify that $m$ is injective on $k$-points.
    Checking this is equivalent to showing that 
    \[u_-t\ol{\mf s}(a_1)u_+ = \ol{\mf s}(a_2)\implies u_-=u_+=t=1, a_1=a_2\]
    for $u_- \in \U^-(k),u_+\in \U^+(k), t\in \T(k), a_1,a_2\in \T^+_\ad(k)$.
    We may assume that $a_1 = e_I$ where $e_I$ is an idempotent corresponding to a subset $I$ of the set of simple roots without any loss of generality.
    There is a map from the source of $m$ to $\T^+_\ad$ given by $(u_-, t, a, u_+) \mapsto ta$. Then $m$
    is a $\T_\ad^+$-morphism where the target is viewed as a $\T_\ad^+$-scheme via $\mf a$.
    Therefore, $t e_I = a_2$ as elements of $\T^+_\ad.$
    Hence, \[u_- t \ol{\mf s}(e_I)u_+ = (t,t)\ol{\mf s}(e_I)\implies u_- \ol{\mf s}(e_I) u_+ = (1,t)\ol{\mf s}(e_I)\implies (t^{-1}u_-)\ol{\mf s}(e_I) u_+ = \ol{\mf s}(e_I).\]
    In \cite[Appendix C]{constant-term}, it is shown that the $\P\times_k \P^-$-stabilizer of $\ol{\mf s}(e_I)$, where $(\P,\P^-)$ is a certain pair of opposite parabolics of $\G$ depending on $I$, is $\P \times_\M \P^-$, where $\M$ is the Levi factor.
    It is also shown that 
\[
\P = \{ g \in \G \mid g \cdot \overline{\mathfrak{s}}(e_I) = \overline{\mathfrak{s}}(e_I) \cdot g \cdot \overline{\mathfrak{s}}(e_I) \} \quad \text{and} \quad \P^- = \{ g \in \G \mid \overline{\mathfrak{s}}(e_I) \cdot g = \overline{\mathfrak{s}}(e_I) \cdot g \cdot \overline{\mathfrak{s}}(e_I) \}.
\]
    From the above descriptions, one deduces that the $\G\times_k \G$-stabilizer of $\ol{\mf s}(e_I)$ is actually contained in the $\P\times_k\P^-$-stabilizer and hence equals $\P\times_\M \P^-$.
    We conclude that $t = u_- = u_+ = 1$ and hence $m$ is injective on $k$-points.
    
    Let us now consider the case of general $S$. Since the formation of $m$ commutes with base change, it follows by fibral criteria that $m$ is \'etale \cite[\href{https://stacks.math.columbia.edu/tag/039E}{Tag 039E}]{stacks}.
    Thus, $m$ is an \'etale monomorphism and we conclude by
\cite[\href{https://stacks.math.columbia.edu/tag/025G}{Tag
025G}]{stacks}.
\end{proof}

\begin{lemma}\label{lem:flatness-of-saturation}
    Let $H$ be a group scheme locally of
    finite presentation and flat over a scheme $S$ acting on
    an $S$-scheme $X.$ 
    Let $U \subseteq X$ be an
    open
    subscheme.
    Then the saturation $H\cdot U$ of $U$ is an $H$-stable open
    subscheme of $X.$ If in addition
    \begin{itemize} 
        \ii $U$ is
    $S$-flat then so is $H\cdot U$,
        \ii $U$ is locally of finite presentation over $S$
        then so is $H\cdot U$,
        \ii $U$ is $S$-smooth then so is $H\cdot
        U$.
\end{itemize}
\end{lemma}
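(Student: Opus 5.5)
The plan is to realize $H\cdot U$ as the image of an open subscheme of $H\times_S X$ under the action map, which is a flat, locally finitely presented, open morphism. Let $a\colon H\times_S X\to X$ be the action and $p\colon H\times_S X\to X$ the second projection. Then $H\cdot U \colonequals a(p^{-1}(U)) = a(H\times_S U)$ as a set. The automorphism $(h,x)\mapsto (h,hx)$ of $H\times_S X$ interchanges $a$ and $p$. Since $p$ is a base change of $H\to S$, it is flat and locally of finite presentation, so $a$ is as well, and in particular universally open. Hence $a(H\times_S U)$ is open; we give it the open subscheme structure. It is $H$-stable: if $y=a(h,u)$, then for any $T$-point $g$ of $H$ we have $gy = a(gh,u)$, which lies in the image. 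To make this precise scheme-theoretically, check that $a^{-1}(H\cdot U)$ and $p^{-1}(H\cdot U)$ agree as open subsets of $H\times_S X$. Pointwise this holds because $H\cdot U$ is stable under all field-valued points of $H$: a point of $H\times_S X$ over a field $K$ gives $h\in H(K)$ and $x\in X(K)$, and the group-translate statement over $K$ suffices.

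For the permanence properties, consider the surjection $a|\colon H\times_S U\to H\cdot U$. It is the restriction of $a$ to an open subscheme, so it is flat, locally of finite presentation, and surjective, hence fppf. Via the isomorphism above it is identified with $p\colon H\times_S U' \to H\cdot U$ restricted to the open $W=\{(h,y): h^{-1}y\in U\}$. More usefully, use the properties directly.

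\textbf{Flatness.} $H\times_S U\to S$ is flat, since it is a composite of flat maps ($U$ is $S$-flat and $H$ is $S$-flat). Because $a|\colon H\times_S U\to H\cdot U$ is faithfully flat, flatness over $S$ descends: for $y\in H\cdot U$ choose a preimage $z$; then $\OO_{S,s}\to\OO_{X,y}\to\OO_{H\times U,z}$ has flat composite, and $\OO_{X,y}\to\OO_{H\times U,z}$ is faithfully flat. Hence $\OO_{X,y}$ is $\OO_{S,s}$-flat.

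\textbf{Finite presentation.} Local finite presentation descends along fppf covers (EGA IV 11.3.16/17.7.5, or Stacks Tag 06EV-type results on descending properties of morphisms fppf-locally on the source). Here the relevant property is of the morphism $H\cdot U\to S$, and it is local on the source in the fppf topology.

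\textbf{Smoothness.} Smoothness is likewise local on the source in the fppf topology (Stacks Tag 036U list / 06F2). Concretely, given flatness and local finite presentation, it reduces to geometric regularity of fibers. Regularity descends under faithfully flat local maps, and the fibers of $H\times_S U\to H\cdot U$ over geometric points are used via the standard descent statement.

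The main obstacle is the care needed in the openness and $H$-stability assertions. $H$ is not assumed quasi-compact or smooth, so one must rely on fppf morphisms being universally open, and check stability scheme-theoretically rather than on points. The descent steps are standard citations.
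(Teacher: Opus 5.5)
The parts on openness, $H$-stability, flatness and local finite presentation are correct and match the paper's argument. Both use that the action map is flat and locally of finite presentation, hence universally open, and then descend along the fppf surjection $H\times_S U\to H\cdot U$. Your pointwise check of $H$-stability is a useful extra detail.

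\textbf{The gap is in the smoothness bullet.} Smoothness is \emph{not} local on the source in the fppf topology. It descends along a flat surjection $Y'\to Y$ only when $Y'$ is already $S$-smooth, and it does not ascend along fppf covers. Your cover is $H\times_S U$, which is $S$-smooth only if $H$ is, but the lemma only assumes $H$ flat and locally of finite presentation. For example, take $S=\Spec \mathbf F_p$ and $H=\mu_p$ or $\alpha_p$. Then every geometric fiber $H_{\bar s}\times U_{\bar s}$ is non-reduced, so your step ``regularity descends under faithfully flat local maps'' has no regular source to descend from. The conclusion is still true, and in the paper's applications $H=\G\times_S\G$ is smooth, so your argument suffices there. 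It does not prove the lemma as stated.

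\textbf{How to repair it (this is the paper's route).}
\begin{enumerate}
\item You already have flatness and local finite presentation, so smoothness is a fiberwise condition. Formation of $H\cdot U$ commutes with base change, because surjectivity of $H\times_S U\to H\cdot U$ is stable under base change. So you may assume $S=\Spec k$ with $k$ algebraically closed.
\item Now $H\cdot U$ is locally of finite type over $k$, so each closed point $y$ is a $k$-point.
\item The fiber of $H\times_k U\to H\cdot U$ over $y$ is a nonempty $k$-scheme locally of finite type, so it has a $k$-point $(h,u)$. Hence $y\in h\cdot U$ with $h\in H(k)$.
\item The union of the translates $h\cdot U$, for $h\in H(k)$, is open in the Jacobson scheme $H\cdot U$ and contains every closed point, so it is all of $H\cdot U$.
\item Each $h\cdot U\simeq U$ is smooth, so $H\cdot U$ is smooth.
\end{enumerate}

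Alternatively, replace $H_{\bar s}$ by the smooth subgroup $(H_{\bar s})_{\mathrm{red}}$, which gives the same saturation of $U_{\bar s}$ set-theoretically. Then your descent-of-regularity argument applies.
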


\begin{proof}
    The $H$-saturation $H\cdot U$ is defined as the
    image of the
    composition
    \[H\times_S U \inj H\times_S X
        \xrightarrow{(h,x)\mapsto (h,h\cdot x)}
    H\times_S X \xrightarrow{(h,x)\mapsto x} X.\]
    The middle map is an isomorphism and the last
    map is a projection. Since flat morphisms of
    locally finite presentation are universally
    open \cite[\href{https://stacks.math.columbia.edu/tag/01UA}{Tag 01UA}]{stacks}, it follows that $H\cdot U$ is open.
    The induced map $H\times_S U \to H\cdot U$ is
    fppf. 
    Since flatness and being locally of finite
    presentation are fppf local on source \cite[\href{https://stacks.math.columbia.edu/tag/06ET}{Tag 06ET}, \href{https://stacks.math.columbia.edu/tag/06EV}{Tag 06EV}]{stacks}, $S$-flatness of
    $H\cdot U$ is equivalent to that of $H\times_S
    U$ and the same is true for being locally of finite
    presentation.
    When $U$ is smooth, we reduce to the case of
    $S$ an algebraically closed point by
    \cite[\href{https://stacks.math.columbia.edu/tag/01V8}{Tag
    01V8}]{stacks}, in which case smoothness follows
    because translations of $U$ cover $H\cdot U.$
\end{proof}

The $\G\times_S\G$-saturation of the image of $m$ in Proposition \ref{prop:big-cell} is called the {\em nondegenerate locus}, denoted $V_\G^\circ.$
It is independent of the choice of Borel and Cartan
subgroup.

\begin{corollary}\label{cor:smoothness-of-nondegenerate-locus}
    The nondegenerate locus $V_\G^\circ$ is $\T\times_S \G\times_S \G$-stable and contains $\G_+$ by construction. Further, the abelianization $\mf a$ restricted to $V_\G^\circ$ is smooth.
\end{corollary}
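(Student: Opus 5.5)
The corollary has three parts: $\T\times_S\G\times_S\G$-stability of $V_\G^\circ$, the inclusion $\G_+\subseteq V_\G^\circ$, and smoothness of $\mf a|_{V_\G^\circ}$. I take them in that order.

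\emph{Stability.} By definition $V_\G^\circ=(\G\times_S\G)\cdot \Omega$, where $\Omega$ is the image of $m$ from Proposition \ref{prop:big-cell}. Lemma \ref{lem:flatness-of-saturation} already says $V_\G^\circ$ is an open $\G\times_S\G$-stable subscheme. The actions of $\T$ and of $\G\times_S\G$ on $V_\G$ commute, since they commute on the dense open $\G_+$ and $V_\G$ is separated and flat. So it suffices to show that $\T$ preserves $\Omega$. The $\T$-action is by the first factor of $\T\times^{Z_\G}\G$, so it commutes with the $\U_\pm$ translations. On the torus part it acts by $t'\cdot(t\,\ol{\mf s}(a))=(t't)\,\ol{\mf s}(a)$. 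Here I use that $\ol{\mf s}(a)$ lies in the closure of the central-torus image. Concretely, $t'$ acts on $\T_+=\T\times^{Z_\G}\T$, which is the closure domain, and $t'\cdot\mf s(a)=c(t')^{-1}\cdot \mf s(\bar t' a)$ with $\bar t'$ the image of $t'$ in $\T_\ad$. By density this extends to $\T_\ad^+$, so $\Omega$ is $\T$-stable. Hence $V_\G^\circ$ is $\T\times_S\G\times_S\G$-stable.

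\emph{Containment of $\G_+$.} Each $(t,g)\in\G_+$ equals $g\cdot(t,1)$, and $(t,1)=c(t)^{-1}\cdot\mf s(\bar t)$ lies in $\Omega$. So $\G_+\subseteq (\G\times\G)\cdot\Omega$, which one can also check fiberwise on points.

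\emph{Smoothness of $\mf a$.} Restricted to $\Omega\cong \U_-\times_S\T\times_S\T_\ad^+\times_S\U_+$, the map $\mf a$ becomes $(u_-,t,a,u_+)\mapsto \bar t a$, using that $\mf a$ is a monoid map killing unipotents and $\mf a\circ\ol{\mf s}=\mathrm{id}$. Up to the automorphism $(t,a)\mapsto(t,\bar t a)$ of $\T\times_S\T_\ad^+$, this is the projection onto $\T_\ad^+$, whose fibers are $\U_-\times\T\times\U_+$. So $\mf a|_\Omega$ is smooth. Since $\mf a$ is $\G\times_S\G$-invariant (Theorem \ref{thm:vinberg-main}), each translate $g\Omega g'$ is smooth over $\T_\ad^+$ as well, and these translates cover $V_\G^\circ$ as an open cover. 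More formally, $\G\times\Omega\times\G\to V_\G^\circ$ is fppf, and smoothness over $\T_\ad^+$ descends, as in the proof of Lemma \ref{lem:flatness-of-saturation} but with base $\T_\ad^+$.

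The main obstacle is the $\T$-stability of $\Omega$, that is, controlling how $\T$ acts on $\ol{\mf s}(a)$ at boundary points of $\T_\ad^+$. The density argument handles this because $V_\G$ is separated and both sides are morphisms from $\T\times\T_\ad^+$ that agree on the dense open $\T\times\T_\ad$.
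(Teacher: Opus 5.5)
Your proposal is correct and is essentially the argument the paper leaves implicit. Stability and $\G_+\subseteq V_\G^\circ$ are read off from the big cell $\Omega$ of Proposition \ref{prop:big-cell}, and smoothness of $\mf a|_{V_\G^\circ}$ follows from smoothness of $\mf a|_\Omega$ via Lemma \ref{lem:flatness-of-saturation} run over the base $\T_\ad^+$ (legitimate since $\mf a$ is $\G\times_S\G$-invariant).

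One small inconsistency: in some steps you treat the $t$ in the cell as the central unit $(t,1)$, giving $t'\cdot(t\,\ol{\mf s}(a))=(t't)\,\ol{\mf s}(a)$ and $\mf a\circ m\colon(u_-,t,a,u_+)\mapsto\bar t a$. In your containment step you instead treat it as $c(t)\in\G$, and under that reading the formulas become $(t'^{-1}t)\,\ol{\mf s}(\bar t'a)$ and $\mf a\circ m\colon(u_-,t,a,u_+)\mapsto a$. Under either reading $\Omega$ is $\T$-stable and $\mf a|_\Omega$ is smooth, so the conclusion is unaffected.
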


\begin{example}
    For $\G = \SL_{2}$, $V_\G$ can be identified with the monoid scheme of $2 \times 2$ matrices $\Mat_{2\times 2}$ and $\mf a$ can be identified with the determinant map $\det \colon \Mat_{2\times 2} \to \AA^1.$ The nondegenerate locus $V_\G^\circ$ can then be identified with $\Mat_{2\times 2} \setminus \{0\}.$
\end{example}

\subsection{Vinberg monoids in the non-split case}
Let $G$ be a (not necessarily split) reductive group over a scheme $S.$
Let $\G$ be the split form of $G$ and $\T$ the abstract Cartan of $\G$.
Let $T$ be the (possibly non-split) abstract Cartan of $G$, defined as the twist of $\T$ by the \'etale local $\Aut_{\G/S}$-torsor corresponding to $G$.
Define $G_+ \colonequals T \times_S^{Z_G} G$ and $T_\ad \colonequals T/Z_G.$
Like before, 
let $\Delta\subset \XX^\bullet(\T_\ad)$ be the set of simple roots and $\T_\ad \inj \T_\ad^+ \colonequals \Spec \OO_S[\XX^\bullet(\T_\ad)_{\text{pos}}]$ the obvious toric embedding, where $\XX^\bullet(\T_\ad)_{\text{pos}}$ is the submonoid generated by $\Delta.$
Finally, define $T_\ad \inj T_\ad^+$ as the twist of $\T_\ad \inj \T^+_\ad$ by the \'etale local $\Aut_{\G/S}$-torsor corresponding to $G$. Here, $\T^+_\ad$ is a possibly nontrivial affine space bundle over $S.$

\begin{theorem}
    There is a reductive monoid scheme $V_G$ over $S$ which fits into a Cartesian diagram  
\[\begin{tikzcd}
    {G_+} & {V_{G}} \\
    {T_{\ad}} & T_\ad^+
	\arrow[hook, from=1-1, to=1-2]
	\arrow[from=1-1, to=2-1]
	\arrow["{\mf a}", from=1-2, to=2-2]
	\arrow[hook, from=2-1, to=2-2]
\end{tikzcd}\]
such that 
    \begin{itemize}
        \ii $G_+$ is the unit group of $V_G$,
        \ii the $T\times_S G \times_S G$ action on $G_+$ given by $T$
        acting on the first component and $G$ acting on itself by left and
        right multiplication extends to an action
        on $V_G$,
        \ii $\mf a$ is faithfully flat and equivariant.
    \end{itemize}
\end{theorem}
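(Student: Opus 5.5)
The plan is to construct $V_G$ by twisting the split Vinberg monoid $V_\G$ by the torsor classifying $G$, and to obtain every listed property by étale descent from Theorem \ref{thm:vinberg-main}. Let $P \colonequals \mathrm{Isom}(\G_S, G)$. This is an étale-local $\Aut_{\G/S}$-torsor; indeed the paper has already used it to define $T$, $T_\ad$ and $T_\ad^+$. The first step is to produce an action of the $S$-group scheme $\Aut_{\G/S}$ on $V_\G$, together with compatible actions on $\G_+$, $\T$, $\T_\ad$ and $\T_\ad^+$, such that $\mf a$, the Cartesian square, and the $\T\times_S\G\times_S\G$-action are all equivariant. Here $\G\times_S\G$ acts on itself by $(\sigma,\sigma)$. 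Once this is in place, we set
\[
V_G \colonequals P\times^{\Aut_{\G/S}} V_\G .
\]
Because $V_\G$ is affine over $S$, this contracted product is represented by an affine $S$-scheme. The comparison with the definitions of $T$ and $T_\ad^+$ shows that the twisted square is exactly the required one.

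For the action, recall that $\Aut_{\G/S} = \G_\ad \rtimes \Out$, where $\Out$ is the constant group of pinned automorphisms. The factor $\G_\ad$ acts on $V_\G$ by conjugation. This makes sense on the ring $\bigoplus_\nu \fil_\nu\OO(\G)$ because each $\fil_\nu\OO(\G)$ is a $\G\times\G$-submodule, hence stable under the diagonal action, and $Z_\G$ acts on it trivially. Conjugation fixes $\OO(\T_\ad^+)$, and it acts on $\G_+ = \T\times^{Z_\G}\G$ through the second factor. For the factor $\Out$, a pinned automorphism $\sigma$ preserves $\T$ and $\B$, permutes $\Delta$, preserves $\preceq$, and commutes with $w_0$. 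Therefore $\sigma$ carries $\fil_\nu$ to $\fil_{\sigma(\nu)}$ and induces an automorphism of the Rees algebra that restricts to the permutation of $\XX^\bullet(\T_\ad)_{\text{pos}}$. One must also check that these two actions assemble into an action of the semidirect product. This holds because both are restrictions of the natural functorial action of $\Aut(\G)$ on $\OO(\G)$ extended to graded pieces, so the required relation $\sigma\circ\ad(g)\circ\sigma^{-1} = \ad(\sigma(g))$ holds on each filtered piece.

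I expect the main obstacle to be making this functoriality work scheme-theoretically. The action has to be given by a morphism $\Aut_{\G/S}\times_S V_\G\to V_\G$, and not merely by abstract automorphisms. The cleanest route is to work over $\ZZ$, express the $\G_\ad$-action as a coaction on $\bigoplus\fil_\nu\OO(\G)$, and use that the coaction of $\G\times\G$ on $\OO(\G)$ preserves each $\fil_\nu$. That stability was already needed in Theorem \ref{thm:vinberg-main} for the $\G\times\G$-action to extend to $V_\G$; restricting it along the diagonal gives the conjugation coaction.

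With the action established, the listed properties follow by descent. The unit group of $V_G$ is the twist of the unit group $\G_+$ of $V_\G$, since forming the unit group commutes with étale base change. The $T\times_S G\times_S G$-action is obtained by twisting the action map, which is equivariant for $\Aut_{\G/S}$. Here $\Aut_{\G/S}$ acts on $\T\times\G\times\G$ in the natural way, and the twist of this product is $T\times_S G\times_S G$. Faithful flatness and equivariance of $\mf a$ are étale local on $S$, so they follow from Theorem \ref{thm:vinberg-main}. Finally, the Cartesian property is also étale local: the natural map from $G_+$ to the fibre product can be checked to be an isomorphism after base change to an étale cover trivializing $P$.
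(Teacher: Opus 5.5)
Your proposal is correct and follows the same route as the paper, whose proof is the single line ``Theorem \ref{thm:vinberg-main} applied to the split form $\G$ followed by \'etale descent'' along the $\Aut_{\G/S}$-torsor of $G$. You do more than the paper by checking the $\Aut_{\G/S}$-equivariance of $V_\G$ that the paper leaves implicit: conjugation factors through $\G_\ad$, and pinned automorphisms carry $\fil_\nu$ to $\fil_{\sigma(\nu)}$. The one point to add is that this action is by monoid automorphisms (it comes from automorphisms of $\G$ that respect the comultiplication on $\OO(\G)$), which is what makes the twist $V_G$ a monoid scheme.
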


\begin{proof}
This is Theorem \ref{thm:vinberg-main} applied to the split form $\G$ followed by \'etale descent.
\end{proof}

\begin{theorem}
    There is a dense open subscheme $V^\circ_G\subset V_G$ containing $G_+$, called the {\em nondegenerate locus}, such that 
    \begin{itemize} 
        \ii $V^\circ_G$ is $T\times_S G\times_S G$-stable, and
        \ii $\mf a$ restricted to $V^\circ_G$ is smooth.
    \end{itemize}
\end{theorem}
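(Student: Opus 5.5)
The plan is to construct $V^\circ_G$ by étale descent from the split case, exactly as the monoid $V_G$ itself was constructed. Choose an étale cover $S' \to S$ over which $G_{S'} \simeq \G_{S'}$. Over $S'$ we obtain $V_{G_{S'}} \simeq V_{\G} \times S'$ together with the open subscheme $V_\G^\circ \times S'$ from \S\ref{sec:nondegenerate-locus}. Since $V_G$ was obtained by descending $V_\G$ along the $\Aut_{\G/S}$-torsor, it suffices to show that $V_\G^\circ \subset V_\G$ is stable under the action of $\Aut_{\G/S}$ on $V_\G$. Granting this, the two pullbacks of $V_\G^\circ \times S'$ to $S'' = S' \times_S S'$ agree under the descent isomorphism. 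Open subschemes descend along étale (indeed fpqc) covers, so they give a well-defined open $V_G^\circ \subset V_G$.

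For the stability, write $\Aut_{\G/S} = \G_\ad \rtimes \Out$, with $\Out$ the automorphisms of the based root datum, realized through a pinning $(\B, \T, \{x_\alpha\})$. Inner automorphisms act on $V_\G$ through the $\G\times_S\G$-action, namely conjugation by $(g, g)$ modulo centre, and $V_\G^\circ$ is by definition $\G\times_S\G$-saturated. Pinned outer automorphisms $\sigma$ preserve $\B$, $\T$, $\U_\pm$ and the set $\Delta$. Since the filtration $\fil_\nu$ is canonical, $\sigma$ carries $\fil_\nu$ to $\fil_{\sigma(\nu)}$, so $\sigma$ acts on the Rees algebra compatibly with its action on $\T_\ad^+$. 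It also commutes with $\ol{\mf s}$, because $\phi$ is defined intrinsically via highest-weight coefficients. Hence $\sigma$ maps the image of $m$ from Proposition \ref{prop:big-cell} to itself, and therefore preserves its $\G\times_S\G$-saturation. The paper's remark that $V_\G^\circ$ is independent of the choice of Borel and Cartan gives the same conclusion from a slightly different angle.

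The remaining properties are local, so they can be checked étale locally and transported from the split case. $T\times_S G\times_S G$-stability holds because the action map restricted to $T\times G\times G\times V_G^\circ$ lands in $V_G^\circ$, which can be checked after the base change to $S'$. There it is Corollary \ref{cor:smoothness-of-nondegenerate-locus}, since the twisted action is compatible with the split one. Smoothness of $\mf a|_{V^\circ_G}$ is étale local on the base and is also supplied by Corollary \ref{cor:smoothness-of-nondegenerate-locus}. Containment of $G_+$ holds after base change to $S'$, hence before.

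For density, work fiberwise over geometric points, where $V_\G$ is irreducible by \cite[\S7.5]{stable} and so any nonempty open subset is dense. Density in the scheme-theoretic sense over a general base then follows from fiberwise density together with the flatness of $V_G$. Alternatively, $G_+ \subset V^\circ_G$ is already fiberwise dense in $V_G$, being the preimage of the dense torus $T_\ad$ under the flat map $\mf a$.

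The main obstacle I expect is the compatibility of the outer automorphisms with the canonical section $\ol{\mf s}$ and with the filtration. This has to be verified at the level of the Rees algebra over $\ZZ$, checking that $\sigma$ maps highest-weight coefficients to highest-weight coefficients. Everything else is formal descent.
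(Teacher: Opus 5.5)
Your proposal is correct and follows the same route as the paper, which proves this in one line: \'etale descent reduces it to Corollary \ref{cor:smoothness-of-nondegenerate-locus} for the split form $\G$. You also make explicit the step the paper leaves implicit, namely that $V_\G^\circ$ is stable under $\Aut_{\G/S}$: the inner part acts through the $\G\times_S\G$-action under which $V_\G^\circ$ is saturated, and pinned outer automorphisms preserve the filtration $\fil_\nu$, the section $\ol{\mf s}$, and hence the image of $m$.
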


\begin{proof}
Via \'etale descent, this reduces to Corollary \ref{cor:smoothness-of-nondegenerate-locus} for $\G$.
\end{proof}

\section{Proof of Theorem
\ref{thm:main}}\label{sec:proof-of-theorem-main}
\subsection{Preliminary reductions}
Let $\G$ be the split form of $G$. Fix a pinning of $\G.$ 
This canonically determines a presentation of the automorphism group scheme $\Aut_{\G/S}$ as a semidirect
product of $\G_\ad$ and $\Out_{\G/S}$ and hence a unique quasi-split inner form $\Gqs$ of $G$
endowed with a quasi-pinning\footnote{A quasi-pinning is the data of a Killing pair $(\B,\T)$ and a section
$s \in \HH^0(\Dyn \Gqs, \mf g^{\mathfrak D})^\times$ where $\Dyn \Gqs$ is the scheme
of Dynkin diagrams of $\Gqs$ and $\mf g^{\mathfrak D}$ is a certain line bundle on
$\Dyn \Gqs.$
When $\Gqs$ is split and $\Delta$ a system of simple roots, $\Dyn \Gqs\simeq S \times \Delta$ and $\mf
g^{\mf D}$ is the line bundle which restricts to the eigenspace $\mf g_\alpha$ over $S\times
\{\alpha\}$ for each $\alpha \in \Delta$.
That is, the notion of quasi-pinning  coincides with that of pinning for
split reductive groups. See \cite[Expos\'e XXIV, \S3.9]{sga3} for more details.}
\cite[Expos\'e XXIV, Corollaire 3.12]{sga3}.

Since $G$ is assumed to be isotrivial, the \'etale local
$\G_\ad$-torsor corresponding to $\Gqs$ is isotrivial too. As
finite \'etale morphisms satisfy effective descent for projective
schemes, it suffices to prove the theorem for $\Gqs.$ Indeed, this
essentially boils down to the fact that finite group quotients of
projective schemes are representable by projective
schemes.

Choose a finite \'etale Galois cover $S' \to S$ splitting $\Gqs$ with Galois
group $\Gamma.$
The data of $\Gqs$ is then equivalent to the data of a $\Gamma$-action on the $\Gqs\times_S S'$. 
Thus, it suffices to find an equivariant compactification equipped an action of
$\Gamma$ extending that on $\Gqs\times_S S'$.
The quasi-pinning on $\Gqs$ induces a pinning on $\Gqs \times_S
S'.$ Note that $\Gamma$ additionally acts on the pinning of $\Gqs\times_S S'$.

From now, we replace our setup with a pinned reductive group $(\G,\B,\T,
\{u_\alpha\}_{\alpha\in\Delta})$ over $S$
equipped with an action of a finite group $\Gamma.$

\subsection{Cox-Vinberg hybrid}
We perform the Cox-Vinberg construction introduced in \cite[\S6]{thaddeus}
in a $\Gamma$-equivariant fashion.
As usual, let $\XX_\bullet(\T)$ be the coweight lattice,
$\XX_\bullet(\T)^+_{\QQ}$
the dominant chamber, and $W$ the Weyl group.
Of course, the dominant chamber is $\Gamma$-stable.

\begin{lemma}\label{lem:good-fan}
    There exists a $\Gamma$-stable fan $\Sigma$ which is a subdivision of the
    rational polyhedral set $\XX_\bullet(\T)^+_\QQ$
    such that $W\Sigma$, the $W$-saturation of $\Sigma$, is smooth and
    projective.
\end{lemma}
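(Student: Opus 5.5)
We need a fan $\Sigma$ supported on the dominant chamber, stable under $\Gamma$, such that $W\Sigma$ is a smooth complete fan in $\XX_\bullet(\T)_\QQ$. Completeness is automatic: the Weyl chambers $wC$ for $w\in W$ cover $\XX_\bullet(\T)_\QQ$, with $C=\XX_\bullet(\T)^+_\QQ$. So the content is smoothness and projectivity, together with the requirement that $W\Sigma$ be a fan at all, i.e.\ that translates glue compatibly along chamber walls. We handle the center by splitting off the radical directions: $C$ is the product of the linear space $\XX_\bullet(Z^0)_\QQ$ (on which $W$ acts trivially) with a strictly convex simplicial cone. Since a fan needs strictly convex cones, we first choose a $\Gamma$-stable smooth complete fan on the central part $\XX_\bullet(\T)^{W}_\QQ$. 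For a split torus with finite group action we take the fan of a $\Gamma$-stable smooth projective toric variety, which exists by equivariant resolution (as in the torus case of \cite[\S 6.3]{kestutis}). We then intersect its cones with $C$.

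Concretely, the plan is as follows. Start with the coarse fan $\Sigma_0$ whose cones are the faces of $C$ cut by the cones of the central fan. Then $W\Sigma_0$ is a complete fan because $W$-translates of faces of $C$ meet along common faces: the Weyl chamber decomposition is a fan and $W$ fixes the central directions. Projectivity of $W\Sigma_0$ is witnessed by a $W\rtimes\Gamma$-invariant strictly convex piecewise linear function. We take $\psi(x)=\max_{w}\langle w x,\rho^\vee\text{-type dominant weight}\rangle$, namely the support function of the $W$-orbit of a regular dominant weight, which is $W$- and $\Gamma$-invariant since $\Gamma$ preserves the pinning. We add to it the pullback of a $\Gamma$-invariant strictly convex function on the central fan, obtained by averaging over $\Gamma$. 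We then refine to make the fan smooth. The key point is to refine $\Sigma_0$ inside $C$ $\Gamma$-equivariantly, and to let $W$ propagate the refinement.

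For the refinement, the main obstacle is that the induced subdivisions of a wall $C\cap wC$, seen from both adjacent chambers, must agree. Here we use that $W$ acts simply transitively on chambers, and that a wall of $C$ is glued to a wall of $s_\alpha C$ by the reflection $s_\alpha$, which fixes that wall pointwise. Hence any subdivision of $C$ automatically induces compatible subdivisions on walls, and $W\Sigma$ is a fan whenever $\Sigma$ is a fan subdividing $\Sigma_0$. We obtain smoothness by equivariant resolution of toric singularities: apply a $\Gamma$-equivariant sequence of stellar subdivisions to the finitely many cones of $\Sigma_0$. For instance, we can take barycentric subdivisions repeatedly, which are canonical and hence $\Gamma$-equivariant, and then canonical star subdivisions at $\Gamma$-orbits of minimal lattice points in non-unimodular cones. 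This follows the standard equivariant resolution argument (Kempf--Knudsen--Mumford--Saint-Donat, or the $G$-equivariant version used for toric varieties with finite group action). Each subdivision is performed on whole $\Gamma$-orbits of rays at once, and since cones in one $\Gamma$-orbit are disjoint in their relative interiors the operations commute. Smoothness of cones is preserved under $W$, so $W\Sigma$ is smooth.

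Finally, we check that projectivity survives. Star subdivisions of projective fans remain projective: if $\psi$ is strictly convex on the old fan, then $\psi+\epsilon\,\psi_v$ is strictly convex on the subdivided fan for small $\epsilon$, where $\psi_v$ is the function associated with the new ray. To keep $W$-invariance we perform the $W$-saturated subdivision, star-subdividing simultaneously at all $W\rtimes\Gamma$-translates of the new ray. We use the averaged function $\sum_{g\in W\rtimes\Gamma} g^*\psi_v$, which is still strictly convex for small $\epsilon$ after averaging, because the translates are supported near disjoint cones. I expect this compatibility of projectivity with $W$-saturated subdivisions to be the step requiring the most care.
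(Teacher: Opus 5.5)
Your argument is correct in outline, but it is organized differently from the paper's. The paper starts from a $W\times\Gamma$-stable projective fan $\Sigma'$ refining the Weyl chambers. It applies \cite[Théorème 1]{tori} with the group $W\times\Gamma$ to get a smooth projective $W\times\Gamma$-stable refinement $\Sigma''$, and sets $\Sigma=\Sigma''\cap\XX_\bullet(\T)^+_\QQ$, so $W\Sigma=\Sigma''$ for free.

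You instead build the starting fan explicitly: a $\Gamma$-stable projective fan on the central directions times the faces of the simplicial part of $C$, with the invariant function $\max_w\langle wx,\lambda\rangle$ plus a central term. This spells out what the paper leaves implicit when $\G$ has a central torus. You then resolve only $\Gamma$-equivariantly inside $C$, and use that $C$ is a fundamental domain to conclude that $W\Sigma$ is a fan. That observation is nice. However, projectivity must still be verified on all of $W\Sigma$, so you end up doing star subdivisions along $W\rtimes\Gamma$-orbits anyway. In effect you are reproving the cited theorem for $W\rtimes\Gamma$; applying \cite[Théorème 1]{tori} directly to $W\Sigma_0$ collapses your argument to the paper's.

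Two justifications need fixing.

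First, your reflection argument only handles adjacent chambers. In general, use that $w$ fixes $C\cap wC$ pointwise, since each $W$-orbit meets $C$ exactly once. Then $\sigma_1\cap w\sigma_2=\sigma_1\cap\sigma_2\cap(C\cap wC)$, which is a face of $\sigma_1$ and of $w\sigma_2$.

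Second, and more importantly, ``cones in one $\Gamma$-orbit are disjoint in their relative interiors, so the operations commute'' is not a valid reason. Distinct cones of any fan have disjoint relative interiors, yet star subdivisions need not commute. In $\mathrm{cone}(e_1,e_2,e_3)$, star subdividing at $e_1+e_2$ and $e_2+e_3$ gives different fans in the two orders. If $\gamma$ swaps $e_1$ and $e_3$, neither result is $\gamma$-stable.

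The property you actually need is that no cone has two distinct faces in the same orbit. A single barycentric subdivision provides this: rays are labelled by the dimension of the face whose barycentre they are, and the group preserves labels. The property then survives orbitwise star subdivisions, because each new cone contains exactly one new ray.

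With that in place, your projectivity step works. At walls already present in the old fan, strict convexity survives a small perturbation. At a new wall, only one $\psi_{gv}$ is nonzero on the two adjacent cones.
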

\begin{proof} 
    Subdivide the Weyl chambers in $\XX_\bullet(\T)$ to obtain a projective fan
    $\Sigma'.$
Now apply \cite[Théorème 1]{tori} to $\Sigma'$ with $W\times \Gamma$ as the finite group
acting on $\XX_\bullet(\T)$.
This yields a new smooth projective $W\times\Gamma$-stable fan $\Sigma''$
which is a subdivision of $\Sigma'.$
Then take $\Sigma = \Sigma'' \cap \XX_\bullet(\T)^+_\QQ.$
\end{proof}

\begin{remark}\label{rem:fan-for-semisimple}
    When $\G$ is semisimple, we have the canonical choice of taking $\Sigma$ to be the fan consisting of the single cone $\XX_\bullet(\T)^+_\QQ.$
    Note that this fan does not depend on the
    choice of Galois cover $S' \to S.$
\end{remark} 

Choose primitive lattice generators $\beta=\{\beta_i\}_{i\in I}$ of all the rays in $\Sigma.$
The finite group $\Gamma$ stabilizes $\beta$ and hence lifts to an action on the finite set $I.$
The $\beta_i$'s induce
monoid homomorphisms $\ol \beta_i \colon\AA^1_S \to \T_\ad^+.$
Multiplying these, we get a monoid homomorphism $\AA^I_S \to \T^+_\ad.$
Define $V_{\G,\, \beta}$ so that the following square is Cartesian:
\[\begin{tikzcd}
	{V_{\G,\beta}} & {V_\G} \\
	{\AA_S^I} & {\T_\ad^+}
	\arrow[from=1-1, to=1-2]
	\arrow[from=1-1, to=2-1]
	\arrow["{\mf a}", from=1-2, to=2-2]
	\arrow[from=2-1, to=2-2]
\end{tikzcd}\]
Then $V_{\G,\beta}$ is a $\Gamma$-equivariant reductive monoid scheme over $S$ such that all diagrams in the above square are
$\Gamma$-equivariant. Also, it has 
$\GG_{\mathrm{m},S}^I\times_S \G$ as its group of units.
For any $\sigma \subseteq I$, let $U_\sigma \colonequals \{x \in \AA^I_S \colon
x_i \ne 0 \text{ if }i\notin \sigma\}$. 
Then let $\AA_{S,\beta}^\circ$ be the union of all $U_\alpha$ such
that $\langle\beta_i \colon i\in\sigma\rangle$ is a cone in $\Sigma.$ 
Define $V_{\G,\beta}^\circ \colonequals \AA^\circ_{S,\beta} \times_{\T^+_\ad}
V_{\G}^\circ.$ This is called the {\em
nondegenerate locus} in \cite{thaddeus}.

\subsection{Compactification as a GIT quotient}
We apply geometric invariant theory developed over general bases in
\cite{seshadri}.
As in \cite[\S8]{thaddeus}, we consider quotients 
$V_{\G,\beta}\sslash_\rho \GG_{\mathrm{m},S}^I$ with respect to a
suitable linearization $\rho$ on the trivial line bundle.
Note that the formation of such GIT quotients is compatible with arbitrary base change and in
particular, passing to geometric fibers,
by virtue of {\em linear} reductivity of tori.
Semistable and stable loci are defined as open subschemes and their
formation commutes with arbitrary base change essentially by construction
\cite[\S II]{seshadri}.
In particular, one can use Hilbert-Mumford criterion along geometric
fibers to identify stable and semistable geometric points.
Therefore, the same argument
as in the proof of  \cite[Theorem 8.1]{thaddeus} works to show that
there is a linearization $\rho$ such that $V_{\G,\beta}^{\circ}$ is the
semistable (and stable) subscheme.
As a result, the GIT quotient $\ol \G \colonequals V_{\G,\beta}\sslash_\rho \GG_{\mathrm{m},
S}^I$ contains $\G$ as a fiberwise-dense open subscheme by virtue of
compatibility with formation of this quotient and restricting to geometric points.
It also acquires an action of $\Gamma$, being a geometric quotient of
$V_{\G,\beta}^\circ$ by $\GG^I_{\mathrm m, S}$.

\begin{proposition}\label{prop:separated-and-finitely-presented}
    $\ol\G$ is separated and finitely
    presented over $S.$
\end{proposition}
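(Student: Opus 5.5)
We will deduce both properties from the construction of $\ol\G$ as a GIT quotient following \cite{seshadri}. By definition, $\ol\G = V_{\G,\beta}\sslash_\rho \GG^I_{\mathrm m,S}$ is the relative $\mathrm{Proj}$ over $S$ of the graded $\OO_S$-algebra
\[
\mathcal R \colonequals \bigoplus_{n\ge 0} \bigl(\OO(V_{\G,\beta})\bigr)^{\GG^I_{\mathrm m}}_{n\rho},
\]
the ring of semi-invariants of weight $n\rho$. More concretely, since $V_{\G,\beta}$ is affine over $S$, $\ol\G$ is projective over the affine quotient $V_{\G,\beta}\sslash \GG^I_{\mathrm m,S} = \Spec \OO(V_{\G,\beta})^{\GG^I_{\mathrm m}}$. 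Any relative $\mathrm{Proj}$ is separated over its base, and an affine $S$-scheme is separated. Hence $\ol\G$ is separated over $S$.

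For finite presentation we reduce to the universal case. The whole construction commutes with arbitrary base change, because formation of invariants under a torus is exact. Moreover, $V_\G$, $\T^+_\ad$, $\AA^I_S$, and the linearization $\rho$ are all pulled back from $\Spec\ZZ$. So it suffices to treat $S = \Spec \ZZ$ and then base change, since finite presentation is stable under base change. Over $\ZZ$, $V_{\G,\beta}$ is an affine scheme of finite type, being a fiber product of finite type $\ZZ$-schemes (Theorem~\ref{thm:vinberg-main}). Its coordinate ring $A$ is therefore a finitely generated $\ZZ$-algebra with a $\ZZ^I$-grading coming from the $\GG^I_{\mathrm m}$-action.

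The main step is to show that the invariant subring $A^{\GG^I_{\mathrm m}} = A_0$, and more generally the semi-invariant algebra $\bigoplus_n A_{n\rho}$, is finitely generated over $\ZZ$. We would prove this by a Gordan's lemma argument. Choose homogeneous generators $a_1,\dots,a_r$ of $A$ with weights $w_j\in\ZZ^I$. Then $\bigoplus_n A_{n\rho}$ is spanned by monomials $a^m$ whose exponent $m\in\ZZ^r_{\ge0}$ satisfies $\sum_j m_j w_j \in \ZZ_{\ge 0}\rho$. These exponents form the lattice points of a rational polyhedral cone, hence a finitely generated monoid. Consequently $\mathcal R$ is a finitely generated $A_0$-algebra, $A_0$ is finitely generated over $\ZZ$, and $\mathcal R$ is generated in finitely many degrees. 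A Veronese subring is then generated in degree $1$ and has the same $\mathrm{Proj}$. Therefore $\ol\G$ is of finite type over the Noetherian ring $\ZZ$, hence finitely presented.

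Alternatively, this finiteness is exactly what Seshadri's theorem \cite{seshadri} supplies, namely finite generation of invariants of reductive groups over general bases; we would cite it if it is available in the required form. The expected obstacle is only bookkeeping. One must check that the base change compatibility $\mathcal R\otimes_\ZZ \OO_S \simeq \mathcal R_S$ holds for the semi-invariant algebra, and not merely for the invariants. This follows because $\GG^I_{\mathrm m}$ is diagonalizable: taking a fixed graded piece of a graded module commutes with tensor products.
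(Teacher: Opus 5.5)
Your proof is correct and follows the paper's route: you reduce via base-change compatibility to the Chevalley form over $\Spec\ZZ$ and use finiteness of the GIT quotient there. The paper simply cites Seshadri and the universal Japaneseness of $\ZZ$, whereas you give an explicit Gordan's-lemma argument (valid because the acting group is a split torus, so invariants are weight spaces) and get separatedness directly from the relative $\mathrm{Proj}$ over $S$; one minor precision is that the exponent set is the intersection of the orthant $\ZZ^{r+1}_{\ge 0}$ with the sublattice $\{(m,n) : \sum_j m_j w_j = n\rho\}$ rather than literally the lattice points of a cone in $\ZZ^r$, but Gordan's lemma applies to it equally.
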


\begin{proof}
    \newcommand{\Gsplit}{\G_{\mathrm{Ch}}}
    Let $\Gsplit$ be the Chevalley group scheme
    over $\Spec \ZZ$ for $\G.$
    The same fan $\Sigma$ as in Lemma
    \ref{lem:good-fan} can be used to produce a
    compactification $\Gsplit\inj\ol\Gsplit$ so that $\ol \G = \ol\Gsplit \times_\ZZ S.$
    Therefore, it suffices to check that
    $\ol\Gsplit\to\Spec \ZZ$ is separated and finitely
    presented, which is readily true by standard
    properties of GIT as $\ZZ$ is universally
    Japanese (c.f.
    \cite[\S4]{seshadri}).
\end{proof}

\begin{proposition}\label{prop:smoothness}
    $\ol\G$ is $S$-smooth.
\end{proposition}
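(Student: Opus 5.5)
We show that $\ol\G$ is $S$-smooth by exhibiting it as the quotient of an $S$-smooth scheme by a free torus action. By the discussion preceding Proposition \ref{prop:separated-and-finitely-presented}, $\ol\G$ is a geometric quotient of the (semi)stable locus $V^\circ_{\G,\beta}$ by $\GG^I_{\mathrm m,S}$, and its formation commutes with base change. Since smoothness of a finitely presented morphism may be checked on geometric fibers (the flatness part needs care, see below), the plan has three steps: (i) show $V^\circ_{\G,\beta}$ is $S$-smooth; (ii) show $\GG^I_{\mathrm m,S}$ acts freely on it, so $V^\circ_{\G,\beta}\to\ol\G$ is a $\GG^I_{\mathrm m}$-torsor; (iii) conclude that $\ol\G$ is smooth.

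\emph{Step (i).} By Corollary \ref{cor:smoothness-of-nondegenerate-locus} (and its non-split analogue), $\mf a\colon V_\G^\circ\to\T_\ad^+$ is smooth. By definition $V^\circ_{\G,\beta}=\AA^\circ_{S,\beta}\times_{\T^+_\ad}V^\circ_\G$, so it is smooth over $\AA^\circ_{S,\beta}$, an open subscheme of $\AA^I_S$. Hence $V^\circ_{\G,\beta}$ is $S$-smooth.

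\emph{Step (ii).} We need the stabilizers of $\GG^I_{\mathrm m}$ on geometric points of $V^\circ_{\G,\beta}$ to be trivial as group schemes, not merely finite. Stability gives finite stabilizers; to get triviality we use the smoothness of the fan $W\Sigma$ from Lemma \ref{lem:good-fan}. Concretely, over a geometric point, the $\G\times\G$-saturation reduces the check to points $u_-t\ol{\mf s}(a)u_+$ of the big cell of Proposition \ref{prop:big-cell}, pulled back to $\AA^\circ_{S,\beta}$. This part of $V^\circ_{\G,\beta}$ is $\U_-\times\T\times_{\T_\ad}\big(\T_\ad\times_{\T^+_\ad}\AA^\circ_{S,\beta}\big)\times\U_+$, up to the precise fiber-product bookkeeping. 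The question then becomes a torus computation: the stabilizer in $\GG^I_{\mathrm m}$ of a point of $\T\times_{\T^+_\ad}U_\sigma$, where $\langle\beta_i:i\in\sigma\rangle\in\Sigma$. The coordinates in $\sigma$ may vanish, and the stabilizer is the kernel of $\GG^{I\setminus\sigma}_{\mathrm m}\times(\text{relevant part})\to\T$ induced by the $\beta_i$. Because the cone is smooth, $\{\beta_i\}_{i\in\sigma}$ extends to a $\ZZ$-basis of $\XX_\bullet(\T)$, so the induced map of cocharacter lattices is saturated and the kernel is trivial rather than a finite group scheme (which matters in positive characteristic). This is the Cox-construction argument of \cite[\S6--8]{thaddeus}, performed over $\ZZ$, which must be verified scheme-theoretically rather than on points.

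\emph{Step (iii).} With free action and geometric quotient, Seshadri's GIT \cite{seshadri} gives $V^\circ_{\G,\beta}\to\ol\G$ as a principal $\GG^I_{\mathrm m}$-bundle, hence fppf and in particular faithfully flat. Smoothness descends along fppf covers, because smoothness of $\ol\G\to S$ follows from smoothness of the composite $V^\circ_{\G,\beta}\to S$ when the first map is faithfully flat and finitely presented (Stacks, descent of smoothness). Alternatively, one can check fiberwise: by base change compatibility $\ol\G_s$ is the free quotient of the smooth $V^\circ_{\G,\beta,s}$, and the flatness of $\ol\G\to S$ follows from flatness of $V^\circ_{\G,\beta}$ plus faithful flatness of the torsor.

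The main obstacle is Step (ii): ensuring the stabilizers are scheme-theoretically trivial, uniformly over all characteristics. This is exactly where the smoothness of $W\Sigma$ has to enter, and the reduction from arbitrary points to the big cell must correctly account for the Weyl group, since $W\Sigma$ rather than $\Sigma$ is assumed smooth.
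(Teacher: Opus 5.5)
Your route differs from the paper's. You want smoothness of the total space $V^\circ_{\G,\beta}$, then a torsor structure on $V^\circ_{\G,\beta}\to\ol\G$, and then descent of smoothness. Steps (i) and (ii) are essentially right. The gap is Step (iii), which carries all the weight.

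``Free action plus geometric quotient'' does not by itself give a principal bundle. The standard criterion for upgrading a geometric quotient to a torsor (Mumford, GIT, Prop.\ 0.9) \emph{assumes} the quotient map is flat. Nothing in the GIT you invoke supplies that flatness. Yet flatness of $V^\circ_{\G,\beta}\to\ol\G$ is exactly what your descent of smoothness needs, so as written the argument assumes its main point.

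There are also minor slips in Step (ii):
\begin{itemize}
\item The big cell of $V_{\G,\beta}$ is the \emph{product} $\U_-\times_S\T\times_S\AA^I_S\times_S\U_+$, obtained by base change from Proposition \ref{prop:big-cell}. It is not a fibre product over $\T^+_\ad$, which would discard boundary points.
\item The stabilizer of a point whose coordinates vanish exactly along $\sigma$ is $\ker(\GG^\sigma_{\mathrm m}\to\T)$.
\item There is no Weyl-group issue. Cones of $\Sigma$ are cones of $W\Sigma$, and the $\G\times_S\G$-translates of the single big cell already cover $V^\circ_{\G,\beta}$.
\end{itemize}

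The gap can be closed in either of two ways.
\begin{itemize}
\item Via Alper's good moduli spaces. Tori are linearly reductive over any base, so $[V^\circ_{\G,\beta}/\GG^I_{\mathrm m,S}]\to\ol\G$ is a good moduli space. Your trivial stabilizers make this stack an algebraic space, and a good moduli space of an algebraic space is an isomorphism.
\item By hand. On the big cell, the quotient map is the base change of the Cox quotient $\T\times_S U_\sigma\to\ol\T$ onto the affine chart attached to $\sigma$. When $\sigma$ is smooth, $\{\beta_i\}_{i\in\sigma}$ extends to a basis, and this map is a trivial $\GG^I_{\mathrm m}$-torsor.
\end{itemize}

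The hands-on repair already contains the paper's proof, which needs neither freeness nor a torsor. The $\GG^I_{\mathrm m}$-stable cell $\U_-\times_S\T\times_S\AA^\circ_{S,\beta}\times_S\U_+$ has quotient $\U_-\times_S\ol\T\times_S\U_+$. This is an open of $\ol\G$ that is smooth because $\ol\T$ is the toric scheme of the smooth fan $\Sigma$. Its $\G\times_S\G$-saturation is $\ol\G$, and Lemma \ref{lem:flatness-of-saturation} concludes.

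What your route would add, once Step (iii) is properly justified, is the extra structure that $V^\circ_{\G,\beta}\to\ol\G$ is a $\GG^I_{\mathrm m,S}$-torsor.
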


\begin{proof}
Proposition \ref{prop:big-cell} gives an open cell
$m_\beta\colon\U_-\times_S \T\times_S \AA^I_S \times_S \U_+ \inj
V_{\G,\beta}$ by base changing $m$.
The $\G\times_S\G$ translates of $m_\beta$ cover
$V_{\G,\beta}^\circ$ since the same is true for
$m$ and $V_\G^\circ$ by definition.
This open embedding is equivariant for the obvious
action of $\GG_{\mathrm m, S}^I$ on
$\T$ and $\AA^I_S.$
Passing to GIT quotients with the same linearization
as before, we thus get an open embedding 
\[\U_- \times_S \ol \T \times_S \U_+ \inj \ol
\G,\]
where $\ol \T$ is the toric scheme for $\T$
corresponding to the fan
$\Sigma$.
This toric scheme can be constructed as $\T\times_S
\AA^I_S \sslash_\rho \GG_{\mathrm m, S}^I$ with
semistable (and stable) subscheme equal to
$\T\times_S \AA_{S,\beta}^\circ$
(see, for e.g., \cite[\S5.1]{cox}).
Since $W\Sigma$ is smooth by construction
(Lemma \ref{lem:good-fan}), so is $\Sigma.$ 
Therefore, $\ol\T$ is smooth.
The desired result follows by
Lemma \ref{lem:flatness-of-saturation} since the
$\G\times_S\G$-saturation of
$\U_-\times_S\ol\T\times_S\U_+$ is $\ol\G.$ 
\end{proof}

The proof of Proposition \ref{prop:smoothness} also shows

\begin{proposition}\label{prop:open-cell}
There is an open cell 
\[\Omega\colon\U_- \times_S \ol \T \times_S \U_+ \inj \ol \G\]
whose $\G \times_S \G$-saturation is $\ol \G.$
\end{proposition}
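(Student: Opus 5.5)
The statement is essentially the output of the proof of Proposition \ref{prop:smoothness}, so I would make that argument explicit in three steps.

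\textbf{Step 1: the open cell upstairs.} Base change the open embedding $m$ of Proposition \ref{prop:big-cell} along $\AA^I_S \to \T_\ad^+$. This gives an open embedding
\[m_\beta\colon \U_-\times_S \T\times_S \AA^I_S\times_S \U_+ \inj V_{\G,\beta},\]
since open immersions are stable under base change. Here $\GG_{\mathrm m,S}^I$ acts on $\T$ through the $\beta_i$ and on $\AA^I_S$ by scaling, and $m_\beta$ is equivariant for this action. Intersect with the nondegenerate locus $V^\circ_{\G,\beta} = \AA^\circ_{S,\beta}\times_{\T^+_\ad} V^\circ_\G$. The preimage is $\U_-\times_S(\T\times_S\AA^\circ_{S,\beta})\times_S\U_+$, because the $\AA^I_S$-coordinate of $m_\beta$ is compatible with the structure map to $\AA^I_S$. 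Moreover, the image of $m$ lies in $V^\circ_\G$ by definition of the latter.

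\textbf{Step 2: passing to quotients.} Use the linearization $\rho$ for which $V^\circ_{\G,\beta}$ is the stable locus. Then $\ol\G$ is a geometric quotient of $V^\circ_{\G,\beta}$ by $\GG^I_{\mathrm m,S}$. An open $\GG^I_{\mathrm m,S}$-stable, saturated subscheme has as its quotient an open subscheme of $\ol\G$, because geometric quotients restrict to saturated opens. Saturation holds since the image of $m_\beta$ is $\GG^I_{\mathrm m,S}$-stable and every orbit in the stable locus is closed in it.

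The quotient of $\U_-\times_S(\T\times_S\AA^\circ_{S,\beta})\times_S\U_+$ is $\U_-\times_S\ol\T\times_S\U_+$. The unipotent factors carry trivial $\GG^I_{\mathrm m,S}$-action, and quotients by linearly reductive tori commute with flat base change and products with trivially acted factors. For the middle factor I use the Cox description $\ol\T=(\T\times_S\AA^\circ_{S,\beta})/\GG^I_{\mathrm m,S}$. This produces $\Omega$.

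\textbf{Step 3: saturation.} The $\G\times_S\G$-action on $V_{\G,\beta}$ commutes with $\GG^I_{\mathrm m,S}$, so it descends to $\ol\G$, and the quotient map $V^\circ_{\G,\beta}\to\ol\G$ is surjective and equivariant. By definition of $V^\circ_\G$, the $\G\times_S\G$-translates of $\operatorname{im} m$ cover $V^\circ_\G$. Pulling back, the translates of $\operatorname{im} m_\beta\cap V^\circ_{\G,\beta}$ cover $V^\circ_{\G,\beta}$. Taking images in $\ol\G$, the translates of $\Omega$ cover $\ol\G$.

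\textbf{Main obstacle.} The delicate point is Step 2 over a general base. One must check that the quotient of the open cell is exactly $\U_-\times_S\ol\T\times_S\U_+$ and that it maps isomorphically onto an open of $\ol\G$. I would settle this by working over $\ZZ$ as in Proposition \ref{prop:separated-and-finitely-presented}, and by using that geometric quotients by tori are universal categorical quotients on saturated opens.
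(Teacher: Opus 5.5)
Your proposal is correct and takes essentially the paper's approach. The paper derives this proposition directly from the proof of Proposition \ref{prop:smoothness}: it base-changes $m$ to the $\GG_{\mathrm m,S}^I$-equivariant cell $m_\beta$ and passes to GIT quotients, so that the toric factor $\T\times_S\AA^\circ_{S,\beta}$ becomes $\ol\T$; it then gets the covering of $\ol\G$ from the fact that $\G\times_S\G$-translates of the image of $m$ cover $V^\circ_\G$. Your Steps 1--3 make explicit what the paper leaves implicit, notably $m_\beta^{-1}(V^\circ_{\G,\beta})=\U_-\times_S\T\times_S\AA^\circ_{S,\beta}\times_S\U_+$ and the fact that geometric quotients restrict to $\GG_{\mathrm m,S}^I$-stable opens.
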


What remains is to check projectivity of $\ol \G$. 
In \cite{thaddeus}, properness is shown by realizing $\ol\G$ as
the coarse space associated to a certain proper moduli stack of
bundles. 
We resort to an alternative approach as we do not have a modular interpretation at
hand.

\begin{proposition}
    $\ol\G$ is $S$-projective.
\end{proposition}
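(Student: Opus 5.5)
Since $\ol\G = V_{\G,\beta}\sslash_\rho \GG_{\mathrm m,S}^I$ is a GIT quotient of an affine $S$-scheme by a torus with respect to a character $\rho$, it is by construction $\operatorname{Proj}$ of the graded ring $\bigoplus_{n\ge 0}\OO(V_{\G,\beta})^{\GG_{\mathrm m}^I,\,n\rho}$ of semi-invariants. This Proj is projective over the affine quotient $V_{\G,\beta}\sslash \GG_{\mathrm m,S}^I = \Spec \OO(V_{\G,\beta})^{\GG_{\mathrm m}^I}$, and the degree-$n$ pieces are finite over the degree-$0$ piece. The plan is therefore to show that the invariant ring is $\OO_S$. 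Then $\ol\G$ is $\operatorname{Proj}$ of a finitely generated graded $\OO_S$-algebra, which is projective given Proposition \ref{prop:separated-and-finitely-presented}. As in that proposition, I reduce to the Chevalley model over $\Spec\ZZ$ and base change; projectivity is preserved under base change, and the formation of invariants commutes with base change because tori are linearly reductive.

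To compute the invariants, I use the $\XX^\bullet(\T)^+_{\text{pos}}$-grading of the Rees algebra. The ring $\OO(V_{\G,\beta})$ is
\[
\OO(V_\G)\otimes_{\ZZ[\XX^\bullet(\T_\ad)_{\text{pos}}]}\ZZ[x_i\colon i\in I].
\]
The torus $\GG_{\mathrm m}^I$ acts on the factor $\AA^I$ by scaling and on $V_\G$ through $\T$, via the composite $\GG_{\mathrm m}^I\to\T$ given by the $\beta_i$. Consequently a monomial $x^{m}\cdot f_\nu$, with $f_\nu\in\fil_\nu$ and $f_\nu$ of $\T$-weight $\nu$, has $\GG_{\mathrm m}^I$-weight $(m_i + \langle \nu,\beta_i\rangle)_i$ up to sign conventions. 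Invariance forces $m_i = -\langle\nu,\beta_i\rangle$ (or its sign reversal) for every $i$. Since every $\beta_i$ is a nonzero dominant coweight and the $\beta_i$ span the whole dominant chamber, these equations together with $m_i\ge 0$ and the positivity relations coming from $\nu\in\XX^\bullet(\T)^+_{\text{pos}}$ should force $\nu$ to lie in the degree-$0$ part. That degree-$0$ part is $\fil_0\OO(\G)$, the $\G\times\G$-invariants of $\OO(\G)$ under the relevant weight condition, and this equals $\ZZ$ after passing to $\G_+$-level invariants, because $\T\times\G\times\G$ acts on $\G_+$ transitively.

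A cleaner route to the same conclusion is the valuative criterion. Since $\ol\G$ is finitely presented and separated, it suffices to show properness, and projectivity then follows from Proj over an affine base with trivial invariant ring, i.e. $\operatorname{Proj}$ of a graded $\ZZ$-algebra generated in finitely many degrees. Properness can be checked on geometric fibers only if flatness holds; $\ol\G$ is smooth by Proposition \ref{prop:smoothness}, so I use the fiberwise check. On a geometric fiber, the quotient is Martens--Thaddeus's Cox--Vinberg quotient, and its affine quotient $V_{\G,\beta}\sslash\GG_{\mathrm m}^I$ is a point precisely because the fan $\Sigma$ has support the full dominant chamber. This is the step that uses the completeness of $W\Sigma$ from Lemma \ref{lem:good-fan}. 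I would verify this with the weight computation above: a nonconstant invariant would give a nonzero $\nu$ pairing with all $\beta_i$ with a fixed sign, contradicting that the $\beta_i$ span the chamber.

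The main obstacle is exactly this weight bookkeeping: identifying the $\GG_{\mathrm m}^I$-weight of Rees-algebra elements through the maps $\ol\beta_i$ and the abelianization, and showing that only constants are invariant. This needs care because $\fil_\nu$ contains lower $\T$-weights, so one should work with the associated graded $\bigoplus \mathrm S_{-w_0\nu}\otimes\mathrm S_\nu$, which has the same invariants by a filtration argument. Once the invariant ring is $\ZZ$, finite generation of the semi-invariant ring over $\ZZ$ (Seshadri) gives $\ol\G\hookrightarrow\PP^N_S$ via a closed immersion.
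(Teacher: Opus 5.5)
Your main argument is correct, and it takes a different route from the paper.

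\textbf{Comparison with the paper.} The paper proves properness by the valuative criterion and then uses that a GIT quotient is projective over its affine quotient. Over a DVR $R$ it only extends $K$-points of $\G$. It uses the Cartan decomposition to reduce to $\lambda(\pi)$ with $\lambda$ dominant, and lifts this through the canonical section $\ol{\mf s}$ to an $R$-point of $V^\circ_{\G,\beta}$. You instead show that the affine quotient is $S$ itself, i.e.\ $\OO(V_{\G,\beta})^{\GG_{\mathrm m}^I}=\OO_S$. Then $\ol\G$ is $\operatorname{Proj}$ of a finitely generated graded algebra with degree-zero part $\OO_S$.

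Both arguments rest on the same input: the support of $\Sigma$ is the whole dominant chamber. In the paper, this is what lets $\lambda(\pi)$ land in the nondegenerate locus. In yours, it is that the $\beta_i$ positively span $\XX_\bullet(\T)^+_\QQ$. Your route is purely algebraic. It needs neither the Cartan decomposition nor the extension lemma for dense opens, and it produces the projective embedding explicitly. The paper's route is closer to the classical completeness proofs for wonderful compactifications.

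\textbf{Points to tighten.}

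(i) The sign is not a convention. In $\OO(V_\G)\otimes_{\ZZ[\XX^\bullet(\T_\ad)_{\text{pos}}]}\ZZ[x_i]$ one has $e^\alpha\otimes 1=1\otimes\prod_i x_i^{\langle\alpha,\beta_i\rangle}$. This forces the weight of $x^m f_\nu$ to be $(m_i+\langle\nu,\beta_i\rangle)_i$, which is the sign you need; the reversed sign would admit nonconstant invariants. Say so explicitly.

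(ii) Make the ``should force'' step explicit. From $\langle\nu,\beta_i\rangle\le 0$ for all $i$, you get $-\nu\in\QQ_{\ge 0}\Delta$, the dual cone of the dominant chamber. Write $\nu=\mu+\gamma$ with $\mu$ dominant and $\gamma\in\ZZ_{\ge0}\Delta$. Then $\mu$ is a dominant weight lying in $-\QQ_{\ge0}\Delta$, so $(\mu,\mu)\le 0$ for a $W$-invariant positive definite form. Hence $\mu=0$, and then $\gamma=\nu\in\ZZ_{\ge0}\Delta\cap(-\QQ_{\ge0}\Delta)=0$.

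(iii) $\fil_0\OO(\G)$ is a $\G\times\G$-submodule, not a ring of invariants. So transitivity of $\T\times\G\times\G$ on $\G_+$ is not why it equals $\ZZ$; for instance, the $\T$-invariants of $\OO(\G_+)$ are $\OO(\G/Z_\G)$. The correct reason is that, by (ii), no element of $\XX^\bullet(\T)^+_{\text{pos}}$ is $\prec 0$. Hence $\fil_0=\operatorname{gr}_0=\mathrm S_0\otimes\mathrm S_0$.

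(iv) Your worry about lower weights in $\fil_\nu$ is misplaced. $\GG_{\mathrm m}^I$ acts through the central torus, whose weights are the Rees grading. So every element of $\fil_\nu$ placed in degree $\nu$ is already homogeneous, and no associated-graded argument is needed.

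(v) Drop the ``cleaner route'' paragraph. Properness cannot be checked on geometric fibers even for smooth, separated, finitely presented morphisms: $\Spec K\to\Spec R$ is an open immersion with proper fibers that is not proper. That paragraph also falls back on the same invariant computation anyway.
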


\begin{proof}
    As in the proof of Proposition
    \ref{prop:separated-and-finitely-presented}, we
    may assume $S = \Spec \ZZ.$
    Because $\ol\G$ is constructed as a GIT
    quotient, it suffices to check properness.
    We use valuative criterion for properness.
    For this, we may assume that $S$ is the
    spectrum of a discrete valuation ring
    $R$ with fraction field $K$. 
    By \cite[Lemma 4.1.1]{modelsofcurves}, it is enough to check that any $K$-point $x_K \in
    \G(K)$ extends to an $R$-point of $\ol\G$.
    Using the Cartan decomposition of $\G(K)$, we
    can assume $x_K=\lambda(\pi)\in \T(K)$ for some
    dominant coweight $\lambda$ and 
    uniformizer $\pi.$ 
    Such a point
    induces an $R$-point of $\T^+_\ad$ which in
    turn induces an $R$-point $x_R$ of $V_\G^\circ$ by
    applying the canonical section $\ol{\mf s}$
    (c.f. \S\ref{sec:nondegenerate-locus}).
    Base changing, we get a map
    $\AA^I_S\times_{\T^+_\ad}x_R \to V_{\G,\beta}$
    and choosing an arbitrary $S$-morphism of
    affine spaces
    $\AA^I_S\to
    \T^+_\ad$, not necessarily a section,
    we get a map $x_R \to V_{\G,\beta}.$
    By construction, the generic point of $x_R$
    lies inside the  unit group $\GG_{\text
    m,S}^I\times_S \G$ where its second component is $x_K.$
    Then the image of $x_R$ along the GIT quotient
    morphism gives the desired point.
\end{proof}

\begin{remark}
It is likely possible to obtain a complete
combinatorial classification of all equivariant compactifications of an
isotrivial reductive group scheme by following the
methods of
\cite{thaddeus}, but we do not pursue this here.
\end{remark}

\section{The semisimple case}
We prove Theorem \ref{thm:semisimple} in this section.
Let $G$ be a semisimple reductive group scheme
over $S.$
By \cite[Expos\'e XXIV, Théorème 4.1.5]{sga3}, $G$ is locally
isotrivial.
This allows us to carry out the construction of
\S\ref{sec:proof-of-theorem-main} Zariski
locally on $S$ where we always choose $\Sigma$
according to Remark \ref{rem:fan-for-semisimple}.
Since this choice is independent of the finite
\'etale Galois covers, they glue to give $\ol G \to S.$
By \cite[Theorem 8]{vinberg} and \cite[Proposition 1.3]{bouthier} and compatibility of
such GIT quotients with base change, geometric fibers of
$\ol G_\ad$ are indeed classical wonderful
compactifications of de Concini and Procesi (also see \cite[Theorem 5.4]{thaddeus}).
We have shown that $\ol G_\ad$ is smooth projective
over $S$.
There is a morphism $\ol G \to
\ol G_\ad$ coming from a natural map of Cox-Vinberg
monoids, extending the central isogeny $G \to
G_\ad.$ 
By 
\cite[Lemma 9.2]{thaddeus}, $(\ol G)_s$ is the
normalization of $G_s$ in $(\ol G_\ad)_s$ for each
geometric point $s \to S.$ 
What remains is to prove the statement about the
boundary.

\begin{assumption}
From now on, we assume $G$ is adjoint.
\end{assumption}

\begin{proposition}
    $\ol G\setminus G$ is the union of $S$-smooth relative
    effective Cartier divisor with normal
    crossings.
\end{proposition}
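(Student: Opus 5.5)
The plan is to work locally on $S$ and check everything on the open cell of Proposition \ref{prop:open-cell}, then spread out by the $G\times_S G$-action. The statement is étale local on $S$, and the formation of $\ol G$ commutes with base change. So I first pass to a finite étale cover splitting $G$ and reduce to the pinned split adjoint case $(\G,\B,\T)$ over $S$. Here $\Sigma$ is the single dominant cone (Remark \ref{rem:fan-for-semisimple}). Since $\G$ is adjoint, $\XX_\bullet(\T)$ has basis the fundamental coweights $\{\omega_\alpha^\vee\}_{\alpha\in\Delta}$. These span the rays of $\Sigma$, so $\ol\T\simeq\AA^\Delta_S$ with coordinates $x_\alpha$ given by the simple roots $\alpha$, viewed as characters of $\T=\T_\ad$. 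The toric boundary $\ol\T\setminus\T$ is then the union of the coordinate hyperplanes $V(x_\alpha)$, a divisor strictly with relative normal crossings over $S$.

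Next I transport this to the open cell $\Omega\colon \U_-\times_S\ol\T\times_S\U_+\inj\ol\G$. On $\Omega$ the boundary $\ol\G\setminus\G$ meets the cell exactly in $\U_-\times_S(\ol\T\setminus\T)\times_S\U_+$. To justify this I will use that $\Omega^{-1}(\G)=\U_-\times_S\T\times_S\U_+$. This can be checked on geometric fibers, where it is the classical big cell statement: a point $u_-tu_+$ with $t\in\ol\T\setminus\T$ cannot be invertible, because $\mf a$ sends it to the non-unit part of $\T_\ad^+$, which is compatible with the $\GG_m^I$-quotient. Consequently, on $\Omega$ the boundary is $\bigcup_\alpha V(f_\alpha)$ with $f_\alpha=x_\alpha\circ\mathrm{pr}_{\ol\T}$. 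Each $V((f_\alpha)_{\alpha\in J})\simeq \U_-\times_S\AA^{\Delta\setminus J}_S\times_S\U_+$ is $S$-smooth of codimension $|J|$. So on $\Omega$ the boundary is strictly with relative normal crossings, its components are relative effective Cartier divisors, and each is smooth over $S$.

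Finally I globalize. By Proposition \ref{prop:open-cell} the translates $(g,h)\cdot\Omega$ cover $\ol\G$ (fppf or étale locally on $S$ we get actual points $g,h$). The boundary $\ol\G\setminus\G$ is $\G\times_S\G$-stable, so on each translate it is again strictly with relative normal crossings. The normal crossings property is local, so it follows. To exhibit the boundary as a union of global $S$-smooth divisors, I define $D_\alpha$ as the closure in $\ol\G$ of $\G\times\G$-translates of $\Omega\cap V(f_\alpha)$. More canonically, $D_\alpha$ is the image in the quotient of the preimage of $V(x_\alpha)$ under $V_{\G,\beta}^\circ\to\AA^\Delta_S$, where the map is the composition of $\mathfrak a$-type data with the coordinate $x_\alpha$. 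This preimage is $\G\times\G$-stable and $\GG_m^\Delta$-stable, hence descends to a closed subscheme $D_\alpha\subset\ol\G$ with $D_\alpha\cap\Omega=V(f_\alpha)$. In the non-split case, $\Gamma$ permutes the $D_\alpha$ according to its action on $\Delta$, so étale locally the strict description holds. Globally one gets $S$-smooth relative Cartier divisors, possibly grouped into $\Gamma$-orbits, which is precisely what is claimed.

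The main obstacle is the identification $\Omega^{-1}(\G)=\U_-\times\T\times\U_+$ together with the claim that $D_\alpha$ is Cartier with smooth local equations everywhere, not just on $\Omega$. I would handle this by observing that $D_\alpha$ is cut out on $V_{\G,\beta}^\circ$ by the single $\GG_m^\Delta$-semi-invariant function $x_\alpha$. This function is a smooth coordinate there because $\mathfrak a|_{V^\circ}$ is smooth (Corollary \ref{cor:smoothness-of-nondegenerate-locus}). Since the quotient $V_{\G,\beta}^\circ\to\ol\G$ is a $\GG_m^\Delta$-torsor (the action is free, as the stable locus has trivial stabilizers by the big cell computation), $x_\alpha$ descends to a section of a line bundle on $\ol\G$. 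The smoothness of the intersections then descends from $V_{\G,\beta}^\circ$.
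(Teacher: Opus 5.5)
Your proof is correct, but it takes a different route from the paper.

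\textbf{The paper's route.} It reduces to $S=\Spec\ZZ$ and relies on the open cell $\Omega$ together with fiberwise criteria:
\begin{itemize}
\item flatness of the boundary comes from the saturation Lemma \ref{lem:flatness-of-saturation};
\item the relative Cartier property comes from the fiberwise criterion for relative effective Cartier divisors;
\item the components are the irreducible components $D_i$ over $\ZZ$, which are $\G\times_S\G$-stable by connectedness and smooth as saturations of $\Omega\cap D_i$;
\item relative normal crossings come from the fiberwise Lemma \ref{lem:normal-crossings}, which builds \'etale coordinates out of the local equations of the $D_i$.
\end{itemize}

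\textbf{Your route.} You get the strict normal crossings model directly, \'etale locally on $\ol\G$. You transport the coordinate-hyperplane picture on $\Omega$ along translates by \'etale-local sections of $\G\times_S\G$, so no fiberwise criterion is needed for either the Cartier property or normal crossings. You also produce the components canonically, as descents of $\mf a^{-1}(V(x_\alpha))\cap V^\circ_\G$ along $V^\circ_\G\to\ol\G$. Their $S$-smoothness, and that of all their intersections, then comes from the smoothness of $\mf a|_{V^\circ_\G}$.

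\textbf{What each buys.} Your approach gives the indexing of the components by $\Delta$ for free, which is the content of the remark after the proposition. It costs two facts the paper never has to invoke, both of which you only sketch.
\begin{enumerate}
\item $V^\circ_{\G,\beta}\to\ol\G$ is a $\GG_{\mathrm m}^\Delta$-torsor. This holds because in the adjoint case $\GG_{\mathrm m}^\Delta\to\T$ is an isomorphism. Hence $\GG_{\mathrm m}^\Delta$ acts simply transitively on the $\T$-factor of the big cell, whose quotient is the trivial torsor onto $\Omega$, and the $\G\times_S\G$-translates of the big cell cover $V^\circ_{\G}$.
\item For each $x\in\ol\G$ over $s$, an \'etale neighborhood of $s$ carries a section $(g,h)$ of $\G\times_S\G$ with $x\in(g,h)\Omega$. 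This follows from the smoothness of $\G\times_S\G$, since the locus of such $(g,h)$ in the fiber is a nonempty open.
\end{enumerate}

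\textbf{A caveat on the non-split case.} Your closing remark is inaccurate. The union of the $D_\alpha$ over a nontrivial $\Gamma$-orbit is \emph{not} $S$-smooth, because all the $D_\alpha$ meet along the closed orbit. For non-split $G$ one only obtains $S$-smooth components \'etale locally on $S$. This is also all that the paper's own reduction to the split case delivers.
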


\begin{proof} 
    By construction, the formation of $\ol
G\setminus G \inj \ol G$ commutes with base
change on $S.$
Since smoothness, being a relative divisor, and having normal crossings
are \'etale local on the base, we may assume that
$G$ is split and $S = \Spec \ZZ$.
By Proposition \ref{prop:open-cell}, there is an open cell 
\[\Omega\colon\mU_- \times_S \ol T \times_S \mU_+ \inj \ol G\]
where the toric
embedding $T \inj \ol T$ looks like $\prod_{\alpha
\in \Delta}\GG_{\mathrm m, S} \inj \prod_{\alpha\in\Delta}\AA^1_S.$
The complement $\ol T\setminus T$ is clearly
an $S$-flat divisor, therefore by Lemma
\ref{lem:flatness-of-saturation}, so is the whole
boundary $\ol G\setminus G$.
By \cite[\href{https://stacks.math.columbia.edu/tag/062Y}{Tag
062Y}]{stacks} and the fact that translates of
$\Omega$ cover $\ol G$ when $S$ is an
algebraically closed point, it follows that
$D\colonequals \ol G\setminus G$
is a relative effective Cartier divisor. 
Since $G$ is connected, irreducible components, say
$D_i$, of
this divisor are $G\times_S G$-stable.
Since $\Omega$ is dense in $\ol
G$, it intersects each $D_i$ in a dense open
subscheme, which implies that $D_i$ is the $G\times_S G$-saturation of
the dense open $\Omega\cap D_i$ inside $D_i.$
Hence, we obtain that $D_i$ is $S$-smooth by Lemma
\ref{lem:flatness-of-saturation}.
What remains is to check that $D = \sum_i D_i$ has
relative normal crossings.
We know that $D_s$ has normal crossings for each
geometric point $s \to S$ since translates of
$\Omega_s$ cover $\ol G_s.$
The desired result now follows from Lemma
\ref{lem:normal-crossings}.
\end{proof}

\begin{remark}
    In fact, we end up proving a bit more when $G$
    is split: irreducible components of the
    boundary divisor are indexed by a set of system
    of simple roots $\Delta$ and $G\times_S
    G$-stable subschemes of $\ol G \setminus G$ correspond to subsets of $\Delta.$ 
\end{remark}

\begin{lemma}\label{lem:normal-crossings}
    Let $X$ be a smooth $S$-scheme equipped with a
    relative effective Cartier divisor $D$
    with $S$-smooth irreducible components.
    Assume that for each geometric point $s\in S$,
    $D_s$ has normal crossings in $X_s$. Then $D$ has relative normal crossings over $S.$
\end{lemma}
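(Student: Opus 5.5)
The property is étale local on $X$, so fix a point $x \in \operatorname{Supp}(D)$ lying over $s\in S$. The plan is to realise the components of $D$ through $x$ as the zero loci of local functions $f_1,\dots,f_r$, and then check directly that they satisfy the definition of strictly relative normal crossings.

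\textbf{Local equations.} Let $D_1,\dots,D_r$ be the irreducible components of $D$ passing through $x$. Each $D_i$ is $S$-smooth by hypothesis. First I will show each $D_i$ is a relative effective Cartier divisor. Both $X$ and $D_i$ are $S$-flat and locally of finite presentation. On the fiber, $(D_i)_s \subset X_s$ is a smooth closed subscheme of a smooth variety, and it has codimension one because $D_s$ is a divisor. So $(D_i)_s$ is Cartier in $X_s$. By the fibral criterion \cite[\href{https://stacks.math.columbia.edu/tag/062Y}{Tag 062Y}]{stacks}, $D_i$ is then a relative effective Cartier divisor near $x$.

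After Zariski shrinking around $x$, write $D_i = V(f_i)$ with $f_i \in \Gamma(X,\OO_X)$. Since $D = \bigcup_i D_i$ set-theoretically near $x$, we get $\operatorname{Supp} D = \bigcup_i V(f_i)$ near $x$. If $D$ is meant scheme-theoretically, I would write $D=\sum_i D_i$ and check this on fibers via the same fibral criterion.

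\textbf{Smoothness of the intersections.} It remains to check, for each $J \subseteq \{1,\dots,r\}$, that $V((f_i)_{i\in J})$ is $S$-smooth of codimension $|J|$ near $x$. It is enough to treat $J=\{1,\dots,r\}$ at $x$ itself, since the other points of the neighbourhood are handled by the same argument after shrinking.

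On the geometric fiber, $D_s$ has normal crossings at $x$ and the $(D_i)_s$ are smooth. The main subtle point is here: a priori an irreducible component $D_i$ could have several local branches at $x$ on the fiber. This is excluded because $(D_i)_s$ is smooth, so each $(D_i)_s$ is exactly one branch. The normal crossings condition then says that the images of $f_1,\dots,f_r$ in $\mathfrak m_{X_s,x}$ are part of a regular system of parameters, i.e.\ they are linearly independent in $\mathfrak m/\mathfrak m^2$.

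Equivalently, the differentials $df_1,\dots,df_r$ are linearly independent in $\Omega_{X_s/\kappa(s)}\otimes\kappa(x)$, which is the fiber at $x$ of $\Omega_{X/S}\otimes\kappa(x)$. Since $X$ is $S$-smooth, the Jacobian criterion for smoothness \cite[\href{https://stacks.math.columbia.edu/tag/01V9}{Tag 01V9}]{stacks} (equivalently, the morphism $(f_1,\dots,f_r)\colon X\to\AA^r_S$ is smooth at $x$) shows that $V(f_1,\dots,f_r)$ is $S$-smooth of relative dimension $\dim_x X_s - r$ near $x$. This gives codimension $r$.

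\textbf{Descent to the residue field.} The normal crossings condition on $D_s$ only holds étale locally, so the components there are only defined after an étale base change. To handle this, I will first pass to an étale neighbourhood of $x$. The argument above only uses the global components $D_i$ and the smoothness of $(D_i)_s$, so linear independence of the differentials can be checked after base change to $\bar s$. The conclusion then holds étale locally on $X$, which is exactly the definition of $D$ having relative normal crossings.
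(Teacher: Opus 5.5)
Your proposal is correct and follows essentially the paper's argument. You take local equations $f_i$ of the components through $x$, deduce that the $df_i$ are linearly independent in $\Omega_{X/S}\otimes\kappa(x)$ from the geometric fiber, and conclude by smoothness. The only difference from the paper is cosmetic: the paper completes $f_1,\dots,f_n$ to an \'etale chart $U\to\AA^N_S$ and pulls back $x_1\cdots x_n=0$, while you verify the strict definition directly via smoothness of $(f_1,\dots,f_r)\colon X\to\AA^r_S$. You also justify, via Tag 062Y, that each $D_i$ is Cartier, which the paper takes for granted.

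One point that both you and the paper leave implicit is that distinct components $D_i\neq D_j$ cannot give the same branch of $D_{\bar s}$ at a point over $x$; otherwise $df_i$ and $df_j$ would be proportional. This is where reducedness of the normal crossings divisor $D_{\bar s}$ enters: near $x$ one has $D\supseteq D_i+D_j$, so a shared branch would appear in $D_{\bar s}$ with multiplicity at least two.
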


\begin{proof}
    Let $x \in X$ be a point with image $s \in S$, and $D_1,\ldots,D_n$
    the irreducible components of $D$ passing
    through $x.$
    Choose an affine open neighborhood $U$
    of $x$ such that $D_i
    \cap U$ is cut out by $f_i = 0$ for
    $f_i\in \HH^0(U, \OO_X).$
    The images of $f_i$ in $\Omega_{X/S} \tensor
    \kappa(x)$ are linearly independent because
    $\Omega_{X/S}\tensor\kappa(x) \simeq
    \Omega_{X_s/s}\tensor \kappa(x)$ 
    and the same is true in $\Omega_{X_{\ol s}/\ol s}\tensor
    \ol{\kappa(x)}$ by assumption, where $\ol s$ is an
    arbitrary geometric point lying over $s.$
    Let $N$ be the rank of $\Omega_{X/S}$ at $x.$
    By Nakamaya and possibly shrinking $U$, extend these to a set of
    functions
    $\{f_1,\ldots,f_n,f_{n+1},\ldots,f_N\}$ each
    of which vanish at $x$ such that their
    differentials
    form a basis for the trivial vector bundle
    $\Omega_{U/S}.$
    We thus obtain an {\em \'etale} $S$-morphism 
    \[f\colon U \xrightarrow{(f_1,\ldots,f_N)}
    \AA^N_S\]
    such that $D\cap U$ is the preimage of the
    relative normal crossing divisor
    $x_1x_2\cdots x_n = 0$ in $\AA^N_S.$
\end{proof}

\begin{proposition}\label{prop:agree-with-shangli}
    $\ol G$ agrees with
    the equivariant compactification $\mathcal X$
    of \cite[Theorem 1]{shangli}.
\end{proposition}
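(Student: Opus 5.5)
We want to identify $\ol G$ with Li's compactification $\mathcal X$. The plan is to compare both as equivariant schemes containing the same big cell. Li's $\mathcal X$ is characterized, at least when $G$ is split, by containing an open big cell $\ol\Omega \simeq \mU_-\times_S \AA^\ell_S\times_S \mU_+$ whose $G\times_S G$-translates cover $\mathcal X$. In Li's construction, $\mathcal X$ is obtained as a quotient of $G\times_S\ol\Omega\times_S G$ by an equivalence relation defined through a birational group law on the big cell.

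First we reduce to $G$ split and then to $S=\Spec\ZZ$. Both constructions commute with base change, and both are formed by étale descent from the split case: ours via the Galois-equivariant construction, Li's by its own canonicity. An isomorphism of the split models that is canonical, hence $\Aut$-equivariant, will therefore descend.

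In the split adjoint case, the fan of Remark \ref{rem:fan-for-semisimple} makes $\ol T$ equal to $\prod_{\alpha\in\Delta}\AA^1_S$. Proposition \ref{prop:open-cell} then gives an open cell $\Omega\colon \mU_-\times_S\AA^\Delta_S\times_S\mU_+\inj\ol G$ whose $G\times_S G$-saturation is $\ol G$. We identify $\Omega$ with Li's $\ol\Omega$, compatibly with the open immersions of $\mU_-\times T\times\mU_+\subset G$ into both.

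The comparison map is the key step. The action map $G\times_S\Omega\times_S G\to\ol G$ is surjective, and by Lemma \ref{lem:flatness-of-saturation} it is fppf. Its associated equivalence relation is determined by its restriction over the dense open $G\times(\mU_-T\mU_+)\times G$, since all schemes involved are flat and separated over $S$, by Proposition \ref{prop:separated-and-finitely-presented}. There it coincides with the relation coming from multiplication in $G$, which is exactly how Li's relation is defined. Both $\ol G$ and $\mathcal X$ are therefore fppf quotients of $G\times\ol\Omega\times G$ by the same equivalence relation, namely the schematic closure of the generic one. This yields an equivariant isomorphism $\mathcal X\simeq\ol G$ extending the identity of $G$.

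An alternative route avoids matching the relations. Since $\ol G$ and $\mathcal X$ are separated and $S$-flat and share the dense open $G$, the identity on $G$ extends to a rational map defined on the union of translates of the big cell. Here one uses that translates of $\Omega$ in $\ol G$ map into translates of $\ol\Omega$ in $\mathcal X$, because on $\Omega$ the map is the identity of $\mU_-\times\AA^\ell\times\mU_+$. This gives an everywhere-defined equivariant morphism $\ol G\to\mathcal X$ which is étale and a monomorphism, hence an open immersion. Since both sides are proper with the same geometric fibers, it is an isomorphism.

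The main obstacle is checking that the cell coordinates agree: Li's $\AA^\ell$ coordinate must match ours on $\ol T$ through the simple roots $\Delta$. We would verify this on $T$ using the canonical section $\ol{\mf s}$, via $t\mapsto(\alpha(t))_{\alpha\in\Delta}$. Separatedness of $\mathcal X$ then propagates the equality from $G$ to the closure.
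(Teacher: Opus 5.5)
Your main argument is correct, but it takes a different route from the paper. After the same reduction to the split case, the paper invokes the uniqueness assertion of Li's Theorem 3.4. The rational map $G\times_S\Omega\times_S G\to\Omega$ induced by the action on $\ol G$ satisfies Li's axioms, so it is Li's rational action. Since $\mathcal X$ is by definition the quotient of $G\times_S\Omega\times_S G$ built from that rational action, the two compactifications agree.

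You bypass Li's uniqueness theorem and compare the equivalence relations directly. Put $X=G\times_S\Omega\times_S G$ and $X^0=G\times_S\mU_-T\mU_+\times_S G$. The relation $R=X\times_{\ol G}X$ is closed in $X\times_S X$ because $\ol G$ is separated. It is flat over $X$ via both projections, because $X\to\ol G$ is fppf by Lemma \ref{lem:flatness-of-saturation}. Since $X^0$ is fiberwise dense in the $S$-smooth scheme $X$, its preimages under both projections are schematically dense in $R$. Hence $R$ is the schematic closure of $R\cap(X^0\times_S X^0)$, which is the relation $g_1\omega g_2=g_1'\omega'g_2'$ in $G$.

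The same argument applies to Li's relation, provided $\mathcal X$ is separated and $X\to\mathcal X$ is flat, surjective, and restricts to multiplication on $X^0$. These properties, rather than ``how Li's relation is defined'', are what you need to cite from Li.

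What your route buys: it does not depend on the precise form of Li's Theorem 3.4, since you effectively reprove the uniqueness you need. It also produces the isomorphism as the unique one extending $\mathrm{id}_G$, which makes the descent from the split case transparent. The paper's route is shorter but rests entirely on Li's statement.

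In both routes, identifying $\Omega$ with Li's cell is a convention check. You must verify that Li's $\AA^\ell$-coordinates restrict on $T$ to the simple roots, with the same sign relative to $\mU_\pm$. Separatedness plays no role in that step, so your last sentence is not the real content; the paper also leaves this check implicit.

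Your alternative route is weaker as written: the claim that $\ol G\to\mathcal X$ is a monomorphism is asserted without proof. The clean fix is to construct the inverse $\mathcal X\to\ol G$ symmetrically. This amounts to the other inclusion of relations, and the two composites are identities by separatedness and schematic density of $G$.
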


\begin{proof}
    Firstly, it suffices to check this for quasi-split
    $G$ because our $\ol G$ and the
    compactification $\mathcal X$
    of \cite[Theorem 1]{shangli} is obtained by
    first making the same constructions for split
    $G$ and then performing the obvious inner
    twist.
    The construction for quasi-split $G$ is
    basically the same as the construction for split
    $G$ where everything is equipped with the action
    of a finite abstract group preserving a
    pinning. Thus, we reduce to the split case.
    By Proposition \ref{prop:open-cell}, we have an open
    cell $\Omega \colon \mU_- \times_S \ol T
    \times_S \mU_+ \inj \ol G$ whose $G\times_S
    G$-saturation is $\ol G.$
    The method of \cite{shangli} starts by defining
    a {\em rational} $G\times_S G$-action $\pi$
     on
    $\Omega$ \cite[Theorem 3.4]{shangli} and then
    defining $\mathcal X$ as an appropriate fppf
    sheaf quotient of $G\times_S \Omega \times_S
    G.$
    Due to the uniqueness assertion of
    \cite[Theorem 3.4]{shangli}, we reduce to
    checking that the rational $S$-morphism 
    $G\times_S \Omega \times_S G\dasharrow \Omega$
    induced by the action map
    $G\times_S \Omega \times_S
    G\to\ol G$ given by $(g_1,\omega,g_2)\mapsto
    g_1\omega g_2$ satisfies the conditions of
    {\em loc.\!\! cit.}, but this is clear.
\end{proof}

\section{A torus admitting no equivariant
compactification}\label{sec:bad-torus}
We prove Theorem \ref{thm:counterexample} in this section.
Let $k$ be an algebraically closed field.
We recall the construction of \cite[Expos\'e X, \S1.6]{sga3}.
Let $S_1$ be the N\'eron $1$-gon, obtained by glueing
sections $0$ and $\infty$ of $\PP^1_k.$ 
It can be realized as the nodal cubic curve $S_1$ equipped with the
normalization
$\pi_1\colon\PP^1_k\to S_1.$
Let $S_\infty$ be the N\'eron $\infty$-gon which
comes equipped with a finite morphism
$\pi_\infty\colon
\PP^1_{k}\times \ZZ\to
S_\infty$ which glues the $\infty$-section of
$\PP^1_k \times\{i\}$ with the $0$-section of
$\PP^1_k\times \{i+1\}.$
There is an infinite connected \'etale Galois cover $S_\infty
\to S_1$ with Galois group $\ZZ$ which is covered by
the trivial $\ZZ$-torsor
$\PP^1_{k}\times \ZZ \to \PP^1_k$
where $\ZZ$ acts on the source via
$j\cdot(x,i)=(x,i+j)$.

Define an action of $\ZZ$ on the constant torus
$\GG_{\mathrm m, k}^2 \times S_\infty$ by
$1\cdot (t, x)=(Mt, 1\cdot x)$ where $M$ is the
{\em infinite} order automorphism of $\GG^2_{\mathrm m,
k}$ given by $(t_1,t_2)\mapsto (t_1,t_1t_2).$
By Galois descent, we thus obtain a rank $2$ torus
$T_{S_1}
\to S_1$.
Since $T_{S_1}$ has infinite monodromy by
construction, $T_{S_1}$ is a
quasi-isotrivial\footnote{This means it is split by
an \'etale cover.} torus that
is not isotrivial.
Alternatively, $T_{S_1}$ can be constructed by taking the
constant torus $\GG_{\mathrm m, k}^2 \times
\PP^1_k\to \PP^1_k$ and glueing the fibers over
$0$ and $\infty$ via the automorphism $M.$

\begin{proposition}
    There is no projective $S_1$-scheme $\ol T_{S_1}$
    containing $T_{S_1}$ as a fiberwise dense open
    subscheme such that the translation action of
    $T_{S_1}$ on itself extends to $\ol T_{S_1}.$
\end{proposition}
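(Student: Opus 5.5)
Suppose for contradiction that $\ol T_{S_1}$ exists, with ample line bundle $\mathcal L$. The plan is to pull everything back to the normalization $\pi_1\colon \PP^1_k \to S_1$, where the torus becomes the constant torus $\torus\times\PP^1_k$, and record the gluing condition over the node. The pullback $X \colonequals \ol T_{S_1}\times_{S_1}\PP^1_k$ is a projective $\PP^1_k$-scheme with an action of $\torus$ extending translation. Its fibers $X_0$ and $X_\infty$ over $0,\infty\in\PP^1_k$ are both identified with the fiber of $\ol T_{S_1}$ over the node. Under this identification, the $\torus$-actions on $X_0$ and $X_\infty$ differ by the automorphism $M$. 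That is, we get a $k$-isomorphism $\phi\colon X_0 \xrightarrow{\sim} X_\infty$ satisfying $\phi(t\cdot x)=M(t)\cdot\phi(x)$, extending $M$ on the dense tori.

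The next step is to extract a combinatorial invariant of $X_0$ that $\phi$ must respect, and show that it has to be $M$-invariant. Since the fibers need not be normal, I will avoid toric classification on the fibers themselves and work instead with a DVR. Take $R=\OO_{\PP^1_k,0}$ (and likewise at $\infty$). Since $\PP^1_k$ is normal, I can replace $X$ by its normalization. This is still projective with a $\torus$-action, and the normalization of a fiber-dense torus embedding over a DVR is a normal toric $R$-scheme. Such a scheme is described by a fan $\Sigma_0$ in $\XX_\bullet(\torus)\oplus\ZZ_{\ge0}$ (Mumford's toric schemes over a DVR). Its special fiber has components indexed by the rays of height $1$, and these lie in the "polyhedral complex" slice $\Sigma_0\cap(\XX_\bullet\times\{1\})$.

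To transport invariants across the gluing, I would use Brion's linearization result to make a power of $\mathcal L$ (pulled back) $\torus$-linearized. Then the weight polytopes of $\torus$ acting on global sections of $\mathcal L^n|_{X_0}$ and $\mathcal L^n|_{X_\infty}$ are defined. These are the moment polytopes of the fiber cycles, and they depend only on the reduced orbit structure. So they can be computed from the non-normal fiber itself: the $\torus$-orbits in $X_0$ and their closures, with the Chow/cycle data, are intrinsic. Since $\phi$ intertwines the actions via $M$, the polytope (or the asymptotic cone / recession fan of the orbit decomposition) attached to $X_\infty$ is the $M^\vee$-image of that attached to $X_0$.

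On the other hand, $\PP^1_k$ is connected and the family over $\PP^1_k\setminus\{0,\infty\}$ is constant. So the generic fiber fan, i.e.\ the recession fan of the height-$1$ slices at $0$ and at $\infty$, is the same complete fan $\Sigma_\eta$ at both points. Because the generic fiber is proper, $\Sigma_\eta$ is complete and has finitely many cones. The orbits of $X_0$ of maximal dimension that lie at infinity in the slice are governed by $\Sigma_\eta$, and the same holds for $X_\infty$. Combined with the previous paragraph, this gives $M(\Sigma_\eta)=\Sigma_\eta$. But $M$ is a unipotent matrix of infinite order, and a complete fan with finitely many rays cannot be $M$-stable. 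Indeed, $M^n$ would permute the finitely many rays, so some power of $M$ fixes every ray. Since the rays span the lattice, that power is diagonalizable, and being unipotent it is the identity, which is false. This is the contradiction.

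The main obstacle is rigorously justifying that the recession-fan data is intrinsic to the non-normal special fiber together with its $\torus$-action, so that $\phi$ really transports it via $M$. Brion's linearization should handle this, by allowing me to read off the weight polytope of the generic fiber from the specialization of $\mathcal L$ to each special fiber, since the Hilbert polynomial and the torus-weight multiplicities are constant in flat families.
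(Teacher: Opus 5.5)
Your outline matches the paper's: pull back along $\pi_1$, record the $M$-twisted isomorphism $\phi\colon X_0\xrightarrow{\sim}X_\infty$, produce a complete fan stable under $M$, and contradict finiteness. The endgame is fine. The gap is the step you flag yourself: identifying the fan that $\phi$ transports with the generic-fiber fan $\Sigma_\eta$.

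Since $\phi$ only sees $X_0$, what it transports is the fan of the normalization $\widehat{X_0}$ of $(X_0)_{\mathrm{red}}$. That is the normalization of the component of the special fiber of $\widetilde X_{\ul 0}$ containing $T_k$. In terms of $\Sigma_0\subset\RR^2\times\RR_{\ge0}$, this fan is the star of the vertex $0$ of $\Sigma_0\cap(\RR^2\times\{1\})$, not the recession fan. Bounded edges at $0$ contribute rays that need not lie in $\Sigma_\eta$.

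The two fans coincide once $\{0\}\times\RR_{\ge0}$ is the only ray of positive height, i.e.\ once the normalized model is constant. Proving this is exactly where fiberwise density of $T$ must enter. You never use it at this point, and the sentence about orbits ``at infinity in the slice'' just asserts the conclusion. The paper gets it from Brion's cover and Lemma~\ref{lem:normalization}: the normalized special fibers are irreducible and normal, so $\Delta_1=\Sigma_\del=\Delta_2$.

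A direct argument also works. The map $\widetilde X\to X_{\mathrm{red}}$ is finite and an isomorphism over $T$. Every component of the special fiber $\widetilde X_0$ is $2$-dimensional, so it surjects onto the irreducible surface $(X_0)_{\mathrm{red}}$, hence meets $T_0$ and equals $\ol{T_0}$. Mumford's classification, available via Sumihiro on the normal variety $\widetilde X$, then forces constancy. Separately, your claim that the family is constant over $\PP^1_k\setminus\{0,\infty\}$ is unjustified, though you only need the shared generic fiber.

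The remedy you propose does not work as stated. The result of Brion used in the paper produces a locally linear equivariant finite \'etale cover; it does not linearize $\mathcal L$ on $X$. On a non-normal variety, no power of an ample bundle need be linearizable. For example, on the nodal cubic with its $\GG_{\mathrm m}$-action, a linearization must have equal weights at the two preimages $0,\infty$ of the node, whereas for $\OO_{\PP^1}(d)$ these weights differ by $d$. Since $X$ may be non-normal exactly along $X_0$ and $X_\infty$, this is the relevant situation.

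Suppose instead you linearize a power $L$ of $\nu^*\mathcal L$ on the normalization $\nu\colon\widetilde X\to X_{\mathrm{red}}$, using Mumford's theorem for normal varieties. Then $\phi$ no longer acts on $\widetilde X_0$ and $\widetilde X_\infty$, which are a priori different partial normalizations of $X_0\cong X_\infty$. You must compare through $\widehat{X_0}\cong\widehat{X_\infty}$ instead. Reading $\mathrm{Fan}(\widehat{X_0})$ off the weight polytope of $H^0(\widetilde X_0,L^n)$ then requires $\widetilde X_0$ to be irreducible with a dense torus orbit, which is the same missing fact.

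Once that fact is supplied, your idea does give a valid alternative route that avoids toric schemes over a DVR. The weight multiplicities of the locally free sheaf $f_*L^n$, for $f\colon\widetilde X\to\PP^1_k$, are constant on the connected $\PP^1_k$. For an irreducible projective variety with a dense free torus orbit, the normal fan of the weight polytope is the fan of its normalization. Together these give $\mathrm{Fan}(\widehat{X_\infty})=\mathrm{Fan}(\widehat{X_0})=M_*\mathrm{Fan}(\widehat{X_0})$. As written, however, the central step is not proved.
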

Assume the contrary that such a $\ol T_{S_1}$ exists.
First, replace $\ol T_{S_1}$ by its reduction so that
    it's a genuine $k$-variety-- we know that it is
    irreducible because it contains a torus as an
    open dense subvariety.
    Let $X$ be the base change of $\ol T_{S_1}$ along the normalization $\pi_1 \colon \PP^1_k \to S_1.$
    Let $T \colonequals \torus \times_k \PP^1_k$ be the open dense torus inside $X.$

    Since $X$ is irreducible, it must be flat over $\PP^1_k.$
    Set $T \colonequals \GG_{\mathrm m, \PP^1_k}^2$.
    We use the following result of Brion:

    \begin{theorem}[\protect{\cite[Theorem 4.8]{brion}}]\label{thm:brion-etale-cover}
    Let $G$ be a split torus over a field $k$. Then every quasiprojective $k$-variety $X$ equipped with an action of $G$ admits a finite \'{e}tale $G$-equivariant cover $f \colon Y \to X$, where $Y$ is the union of open affine $G$-stable subvarieties. 
\end{theorem}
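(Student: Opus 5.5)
The approach is to reduce to the standard argument: once an ample line bundle carries a $G$-linearization, the non-vanishing loci of $G$-eigensections give $G$-stable affine opens covering the variety. The only missing input on a non-normal $X$ is the linearization, and the finite \'etale cover $f$ is there to supply it.

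\textbf{Step 1: Setup.} Embed $X$ as a locally closed subvariety of some $\PP^N_k$ and let $L$ be the resulting very ample line bundle. Consider the map
\[
\varphi\colon G\to \operatorname{Pic}_X,\qquad g\mapsto g^*L\otimes L^{-1}.
\]
Since $G$ is connected and $\varphi(1)$ is trivial, $\varphi$ is a homomorphism of group functors. It lands in the identity component $\operatorname{Pic}^0_X$. For normal $X$ that component contains no torus part, so some power of $L$ is $G$-invariant (Sumihiro's argument).

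\textbf{Step 2: Kill $\varphi$ by a finite \'etale cover (main obstacle).} For non-normal $X$, $\operatorname{Pic}^0_X$ can contain tori coming from the combinatorics of the gluing along the non-normal locus. The typical example is a nodal curve with a $\GG_{\mathrm m}$-action, where $\GG_{\mathrm m}\subset \operatorname{Pic}^0$. Each such torus direction corresponds to $\GG_{\mathrm m}$-valued descent data along cycles in the ``dual complex'' of the normalization map $\tilde X\to X$. Here is what I would try first:
\begin{itemize}
\item Write $\operatorname{Pic}^0$ as an extension via the conductor square $\tilde X\to X$.
\item Observe that the image of the torus $G$ lies in the toric part, which is generated by finitely many such cycle classes.
\item Pass to the finite \'etale cover $f\colon Y\to X$ that unwinds these cycles to a suitable finite degree $n$, as the $n$-gon covers the $1$-gon.
\end{itemize}
On $Y$ the classes $\varphi(g)$ pull back to classes that become trivial up to torsion, and replacing $L$ by a power makes $f^*L^{m}$ $G$-invariant. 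The $G$-action lifts to $Y$ after possibly replacing $G$ by nothing more than $G$ itself, since $G$ is connected and acts trivially on the discrete monodromy data. I expect this step to carry the real difficulty, including making the $G$-equivariance of $f$ precise.

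\textbf{Step 3: Linearize.} Since $M=f^*L^m$ is $G$-invariant, the automorphisms of $M$ covering translations form a central extension $1\to\GG_{\mathrm m}\to G_M\to G\to 1$. An extension of a torus by $\GG_{\mathrm m}$ is commutative and hence split, possibly after replacing $M$ by a further power to kill a commutator pairing. A splitting gives a $G$-linearization of $M$, and $M$ is ample because $f$ is finite.

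\textbf{Step 4: Build the cover.} With $M$ linearized and ample on the quasi-projective $Y$, embed $Y$ equivariantly into $\PP(H^0(\ol Y,M^{r})^\vee)$ for a $G$-stable closure $\ol Y$ and large $r$. The torus $G$ acts diagonalizably on sections, so for $y\in Y$ choose a $G$-eigensection $s$ vanishing on $\ol Y\setminus Y$ but not at $y$. Then $\ol Y_s\subset Y$ is affine and $G$-stable, because $s$ is an eigenvector. These opens cover $Y$, which gives the cover $f\colon Y\to X$ required in Theorem \ref{thm:brion-etale-cover}.
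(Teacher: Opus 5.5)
The paper gives no argument here (it quotes Brion), so your proposal has to stand on its own. It fails at Step~2. On a non-normal $X$ one cannot in general reach a $G$-linearized ample line bundle on \emph{any} finite \'etale cover, so Steps~3--4 never receive their input.

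Take your own example: the nodal cubic $X$ over an algebraically closed field, with its $\GG_{\mathrm m}$-action. Its connected finite \'etale covers are the $n$-gons $Y_n$, and the action lifts to them. But $\operatorname{Pic}^0(Y_n)\cong\GG_{\mathrm m}$ still: unwinding the cycle $n$ times gives a longer cycle, not a tree. Let $M$ on $Y_n$ have multidegree $(d_1,\dots,d_n)$.
\begin{itemize}
\item A $\GG_{\mathrm m}$-linearization of $\mathcal O(d_i)$ on the $i$-th component has fibre weights at $0$ and $\infty$ differing by $d_i$.
\item Multiplying the resulting gluing discrepancies around the cycle shows that $t^*M\otimes M^{-1}$ is the point $t^{\pm(d_1+\cdots+d_n)}$ of $\operatorname{Pic}^0(Y_n)$.
\item Hence a linearization of $M$ would force $d_1+\cdots+d_n=0$.
\end{itemize}
For ample $M$ all $d_i>0$, so $\varphi$ is an isogeny onto $\GG_{\mathrm m}$, for $M$ and for every power $M^{m}$. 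It is not ``trivial up to torsion.'' The same holds for disconnected covers, or if the action on $Y$ is through an isogenous torus.

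Conceptually, for seminormal $X$ the obstruction is a class in $H^1_{\mathrm{et}}(X,\ZZ)^{\oplus\dim G}$. This group is torsion-free and injects under pullback along a finite \'etale cover (compose with the trace), so neither finite covers nor powers of $L$ can kill it. Two lesser issues: $g\mapsto g^*L\otimes L^{-1}$ is a priori only a crossed homomorphism, and $\operatorname{Pic}_X$ is not a good scheme for non-proper $X$.

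Nevertheless the conclusion of the theorem does hold on $Y_n$ for $n\ge 2$. Each node has the $\GG_{\mathrm m}$-stable affine neighbourhood $\{uv=0\}\subset\AA^2$, with weights $(1,-1)$, formed by the two adjacent punctured lines. On $Y_1=X$, by contrast, the only stable open containing the node is $X$ itself. So the finite cover is there to separate the branches through a node, not to linearize. A proof must produce stable affine opens without a linearized ample bundle on $Y$.

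One way to do this, in outline:
\begin{itemize}
\item Reduce to seminormal $X$. The obstruction then dies on an infinite cover $X_\infty\to X$ with free abelian deck group $\Lambda$ (like $S_\infty\to S_1$ in \S\ref{sec:bad-torus}).
\item On each quasi-compact $G$-stable open of $X_\infty$, the pullback of $L$ is ample and $G$-linearizable. Your Step~4 applies there and yields $G$-stable affine opens.
\item Finitely many of these have images covering $X$. Each maps isomorphically onto its image in $X_\infty/\Lambda'$ for a sufficiently small finite-index subgroup $\Lambda'\subset\Lambda$.
\item The images of these opens and of their $\Lambda$-translates cover the finite \'etale $G$-equivariant cover $X_\infty/\Lambda'\to X$ by $G$-stable affine opens.
\end{itemize}
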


We may view $X$ as a projective variety with $\GG_{\mathrm m, k}^2$-action by identifying $T\times_{\PP^1_k} X \simeq \GG_{\mathrm m, k}^2 \times_k X$ in the source of the action map.
By applying Theorem \ref{thm:brion-etale-cover}, we get a \torus-equivariant finite \'etale cover $\pi\colon Y \to X$
such that $Y$ is the union of open affine \torus-stable subvarieties.

\begin{lemma}
    $\pi \colon \pi^{-1}(T) \to T$ is a trivial cover.
\end{lemma}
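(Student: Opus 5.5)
We have $T = \GG_{\mathrm m,k}^2\times_k \PP^1_k$ and a finite étale cover $\pi^{-1}(T)\to T$ that is equivariant for $\torus$. The plan is to use this equivariance to reduce the question to covers of $\PP^1_k$, which are trivial because $\PP^1_k$ is simply connected.

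\textbf{Step 1: the torus acts freely and the cover splits off a base.} The open subscheme $\pi^{-1}(T)$ is $\torus$-stable, and $\torus$ acts freely on $T$ by translation in the first factor. Since $\pi$ is equivariant, the action on $\pi^{-1}(T)$ has trivial stabilizers, so it is free. Set $Z \colonequals \pi^{-1}(\{1\}\times\PP^1_k)$. This is a finite étale cover of $\PP^1_k$. The action map
\[
\torus\times_k Z \to \pi^{-1}(T)
\]
is a morphism over $T$, since it covers the isomorphism $\torus\times_k\PP^1_k \to T$. We will show it is an isomorphism.

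\textbf{Step 2: the action map is an isomorphism.} Both sides are finite étale over $T$ of the same degree: the left side is the base change of $Z\to\PP^1_k$ along the projection to $\PP^1_k$. So it suffices to check that the map is bijective on $k$-points.
\begin{itemize}
\item \emph{Injectivity.} If $g z = g' z'$, then applying $\pi$ gives equal first coordinates, so $g = g'$, and then $z = z'$.
\item \emph{Surjectivity.} If $y$ lies over $(t,p)$, then $t^{-1}y$ lies over $(1,p)$, so $t^{-1}y \in Z$.
\end{itemize}
A morphism between finite étale $T$-schemes that is bijective on geometric points is an isomorphism, since it is itself finite étale of degree one. Hence $\pi^{-1}(T) \simeq \torus\times_k Z$ over $T$.

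\textbf{Step 3: conclude with simple connectivity of $\PP^1_k$.} Because $\PP^1_k$ is simply connected over the algebraically closed field $k$, the finite étale cover $Z$ is a disjoint union of copies of $\PP^1_k$. Therefore $\pi^{-1}(T)\simeq \bigsqcup T$ is a trivial cover, as claimed.

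\textbf{Expected obstacle.} The only subtle point is justifying Step 2 rigorously, in particular that equivariance transfers the $\torus$-structure over $T$. This comes down to commutativity of the square relating the action map to translation on $T$, which holds by equivariance of $\pi$. One should also note that $Y$, and hence $Z$, might be disconnected, but the argument does not use connectedness.
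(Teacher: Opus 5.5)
Your proof is correct, but it takes a different route from the paper. The paper first applies the homotopy exact sequence to the $\PP^1_k$-bundle $T\to\torus$, so that every finite \'etale cover of $T$ is pulled back from $\torus$. It then asserts that every connected finite \'etale cover of $\torus$ is an \'etale self-isogeny, and only at the end uses equivariance to force that isogeny to be an isomorphism. You use equivariance at the outset instead. Translation identifies $\pi^{-1}(T)$ with $\torus\times_k Z$ over $T$, where $Z=\pi^{-1}(\{1\}\times\PP^1_k)$, and after that the only fundamental-group input you need is $\pi_1(\PP^1_k)=1$.

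Your route has two advantages. It is more elementary: it needs neither the homotopy exact sequence nor any classification of covers of a torus. It is also characteristic-free. The paper's intermediate claim holds only in characteristic $0$, and the paper places no restriction on the characteristic of $k$. In characteristic $p>0$, the Artin--Schreier cover $y^p-y=x_1$ restricted to $\torus$ is a connected finite \'etale cover of degree $p$ that is not an isogeny; its total space is $(\mathbf A^1\setminus\mathbf F_p)\times\GG_{\mathrm m,k}$, which is not a torus. Your argument never needs that claim.

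One minor point: in Step 2 you check bijectivity on $k$-points but then invoke bijectivity on geometric points. This is harmless, because the degree of a finite \'etale map is locally constant and every connected component of a finite type $k$-scheme has a $k$-point.
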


\begin{proof}
    Any finite \'etale cover of $T$ comes from a finite \'etale cover $U \to \torus$. Indeed, the projection $T\to \torus$ is a $\PP^1_k$-bundle and hence induces an isomorphism on \'etale fundamental groups by the homotopy exact sequence. 
    A connected finite \'etale cover $U \to \torus$ must be an \'etale self-isogeny induced by a linear endomorphism of the character lattice. Equivariance forces such an isogeny to be an isomorphism.
\end{proof}

Label the components of $\pi^{-1}(T)$ as $T_1, T_2,\ldots, T_n$. Each of these are abstractly isomorphic to $T$ and map down to $T\subset X$ via identity.
The \torus-action on $\ol T_i$ upgrades to a $T_i$-action extending the one on $T_i$ by itself.
The union $\bigcup_{i} T_i\subset Y$ is a $\PP^1_k$-fiberwise open-dense subscheme since the same is true for $T\subset X.$
Denote by $t$ a closed point of $\PP^1_k$.
The fiber $Y_t$ has irreducible components given by the  (scheme-theoretic) closures of $(T_1)_t, (T_2)_t,\ldots, (T_n)_t.$
In particular, each fiber of $Y$ has $n$ irreducible components.
Every closure $\ol T_i,\, 1\le i \le n,$ is faithfully flat over $\PP^1_k$ and hence has fibers of pure dimension $2.$
Therefore, each $(\ol T_i)_t$ is the union of closures of a nonempty subset of $\{(T_1)_t, (T_2)_t,\ldots, (T_n)_t\}.$
For $i\ne j$, $(\ol T_i)_t$ and $(\ol T_j)_t$ cannot have an irreducible component in common, say the closure of $(T_h)_t$, because $Y_t$ is generically reduced. 
Alternatively, if $\xi$ is the generic point of $(T_h)_t$,
the local ring $\OO_{Y,\xi}\simeq \OO_{T_h,\xi}$ is a DVR and hence cannot contain more than one minimal prime.
We thus obtain:

\begin{proposition}
    Every $\ol T_i$ is a flat projective variety over $\PP^1_k$ containing the split torus $T_i$ as a fiberwise dense open subvariety such that the action of $T_i$ on itself extends to $\ol T_i$ in a way that $\ol T_i$ can be covered by \torus-stable open affine subvarieties. Furthermore, there is an isomorphism between the normalizations of the $0$-fiber and $\infty$-fiber of $\ol T_i$ restricting to $(x,y)\mapsto (x,xy)$ on \torus.
\end{proposition}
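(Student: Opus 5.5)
Most of this proposition is bookkeeping on what was set up just before. The one substantive claim is the gluing statement between the $0$- and $\infty$-fibers.

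\textbf{Properties of $\ol T_i$.} I would take $\ol T_i$ to be the scheme-theoretic closure of $T_i$ in $Y$, with its reduced structure.

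Projectivity: $\pi$ is finite and $X$ is projective over $\PP^1_k$, so $Y$ is projective over $\PP^1_k$. A closed subvariety of $Y$ is then projective as well.

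Flatness: $\ol T_i$ is integral and dominates the smooth curve $\PP^1_k$, so it is flat.

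Action: $\ol T_i$ is $\torus$-stable, because $T_i$ is stable and the closure of a stable subscheme is stable under a connected group. The $\torus$-action on $\ol T_i$ is the $T_i$-action by construction, under the identification of $T_i$ with $T$ and with $\torus\times\PP^1_k$. It extends translation on $T_i$ because $\pi$ is equivariant and identifies $T_i$ with $T$.

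Fiberwise density: we showed above that each fiber $(\ol T_i)_t$ is a union of closures of the $(T_h)_t$, and that distinct $i$ share no component. So I would argue that $(\ol T_i)_t$ is exactly the closure of $(T_i)_t$. It contains $(T_i)_t$. Any other component would be the closure of some $(T_h)_t$ with $h\neq i$, and that component also lies in $\ol T_h$, contradicting the previous paragraph. Hence $T_i$ is fiberwise dense.

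Affine cover: $Y$ is a union of $\torus$-stable open affines $V$. The sets $V\cap\ol T_i$ are closed in affines, hence affine, and they are stable and cover $\ol T_i$.

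\textbf{Gluing statement.} $X$ is the pullback of $\ol T_{S_1}$ along $\pi_1$, so $X_0$ and $X_\infty$ are both identified with the nodal fiber of $\ol T_{S_1}$. This gives an isomorphism $X_0\simeq X_\infty$ which restricts to $M$ on the tori, by the gluing description of $T_{S_1}$.

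This isomorphism need not lift to $Y$, so I would work with normalizations instead. Since $\pi$ is finite étale, $\pi$ restricted to $(\ol T_i)_t$ is a finite map which is birational: it is the identity on $(T_i)_t\simeq T_t$, and $(\ol T_i)_t$ is generically reduced there because $T_i$ is smooth. Hence it induces an isomorphism of normalizations
\[
\widetilde{(\ol T_i)_t}\simeq \widetilde{X_t},
\]
where on the right I mean the normalization of the irreducible component of $(X_t)_{\mathrm{red}}$ that contains $T_t$.

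To get this I would pass to reductions. Normalization of a generically reduced irreducible scheme is the normalization of its reduction in its function field. Both reductions are finite over each other and birational, so they share the same normalization.

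Composing with $X_0\simeq X_\infty$ then gives an isomorphism
\[
\widetilde{(\ol T_i)_0}\simeq \widetilde{(\ol T_i)_\infty}
\]
which restricts to $M$, that is $(x,y)\mapsto(x,xy)$, on $\torus$.

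\textbf{Main obstacle.} The step I expect to be delicate is this last one. It needs two things: care that the components of $X_t$ may be non-reduced, and confirmation that the torus in $X_t$ is dense in a unique component of $(X_t)_{\mathrm{red}}$. The second holds because $T_t$ is dense in $X_t$, so $(X_t)_{\mathrm{red}}$ is irreducible.
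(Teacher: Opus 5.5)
Your proposal is correct and follows the paper's argument. The first part is read off from the preceding discussion: $\pi^{-1}(T)$ splits into copies of $T$, fibers of $\ol T_i$ have pure dimension $2$, and distinct $\ol T_i$ share no fiber components. The gluing statement comes, as in the paper, from the finite birational map $(\ol T_i)_t \to X_t$, which is the identity on the toral part and so induces an isomorphism of normalizations, composed with the canonical identification $X_0 \simeq X_\infty$ of both fibers with the nodal fiber of $\ol T_{S_1}$. The only difference is cosmetic: where the paper simply invokes Zariski's main theorem, you argue directly via integral closure after passing to reductions, which also deals explicitly with the possible non-reducedness of $X_t$.
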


\begin{proof}
    The first part is clear from the previous discussion.
    For the second part, note that there is a finite birational morphism $(\ol T_i)_t \to X_t$ because it restricts to identity on the toral part and hence induces an isomorphism on normalizations by Zariski's main theorem.
\end{proof}

Set $W \colonequals \ol T_1$ and rename $T_1$ as $T$ for ease of notation.
 Denote $W \to \PP^1_k$ by $f.$
Let $U\subset W$ be a nonempty \torus-stable open affine subvariety.
Then $V \colonequals f(U)$ is a nonempty open subvariety of $\PP^1_k.$
Of course, $U$ is $\PP^1_k$-fiberwise open and hence intersects $T_V = \GG_{m, V}^2$ fiberwise.
Due to \torus-invariance, $U$ must contain whole of $T_V.$
The \torus-action on $U$ can be upgraded to an action of $T_V.$
Therefore, 

\begin{proposition}\label{prop:local-toric-scheme}
    Let $\ul x \colon \Spec \OO_{\PP^1_k, x} \to \PP^1_k$ be the local scheme at a closed point $x\in \PP^1_k.$
Then $W_{\ul x}$
    can be covered by $T_{\ul x}$-stable affine open neighborhoods of $T_{\ul x}\subset W_{\ul x}.$
\end{proposition}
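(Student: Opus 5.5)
The plan is to base change the global covering statement already obtained for $W$ along the localization $\ul x\colon \Spec \OO_{\PP^1_k,x}\to\PP^1_k$. From the discussion preceding the statement, $W=\ol T_1$ is covered by \torus-stable open affine subvarieties $U$. For each such $U$, the image $V=f(U)$ is open in $\PP^1_k$ because $f$ is flat of finite type, hence open. Moreover, $U$ contains $T_V$ and its \torus-action upgrades to a $T_V$-action. I will check the last two claims carefully below. The candidate cover of $W_{\ul x}$ is then the family of base changes $U_{\ul x}=U\times_{\PP^1_k}\Spec\OO_{\PP^1_k,x}$, taken over those $U$ with $x\in f(U)$.

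First, each $U_{\ul x}$ is affine, since $U$ is affine and $\Spec\OO_{\PP^1_k,x}\to\PP^1_k$ is affine. It is open in $W_{\ul x}$. These opens cover $W_{\ul x}$: any point $w$ of $W_{\ul x}$ maps to a point of $W$ lying in some $U$, and its image in $\PP^1_k$ is a generization of $x$, so it lies in the open set $f(U)$. Since $f(U)$ is open and contains a generization of $x$, I still need $x\in f(U)$ in order to get $T_{\ul x}\subset U_{\ul x}$. In fact this is not needed: if $x\notin f(U)$, then $f(U)$ is the complement of finitely many closed points not equal to $x$. Since $\Spec\OO_{\PP^1_k,x}$ contains only $x$ and the generic point, any $U$ meeting $W_{\ul x}$ contains the generic fiber. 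Such a $U$ may still miss the closed fiber, and then $U_{\ul x}$ contains the generic-fiber torus but not $T_{\ul x}$. To avoid this I will discard those $U$. The points of the closed fiber $W_x$ are covered by the $U$ with $x\in f(U)$, and any such $U_{\ul x}$ also contains the whole generic torus. Points of the generic fiber lie in $T_{\ul x}$ or in the closures of torus orbits, and these also lie in such $U$ by \torus-stability. Concretely, a point $w$ of the generic fiber has an orbit closure meeting $W_x$, since $W$ is proper over $\PP^1_k$. I then pick a $U$ containing a point of $W_x$ in that closure. Because $U$ is open and \torus-stable, it contains the whole orbit of $w$.

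Second, I verify $T_V\subset U$ when $x\in V=f(U)$. For each $t\in V$, the set $U_t$ is a nonempty open subset of the irreducible fiber closure meeting the dense open $T_t$. Its intersection with $T_t$ is \torus-stable and nonempty, hence all of $T_t$, because $T_t$ is a single \torus-orbit. Next I upgrade the action to a $T_V$-action. The $T$-action map $T\times_{\PP^1_k}W\to W$ restricts to $T_V\times_V U\to W$. This is the \torus-action fiberwise after the identification $T\times_{\PP^1_k}W\simeq\torus\times_k W$, so it lands in $U$. Base change to $\ul x$ then shows that $U_{\ul x}$ is $T_{\ul x}$-stable and contains $T_{\ul x}$.

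The main obstacle is the covering of generic-fiber points by opens that also meet the closed fiber. My approach is the orbit-closure argument above, using properness of $W\to\PP^1_k$ (projectivity) together with the $T$-stability of the $U$'s.
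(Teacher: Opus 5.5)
Your proposal is correct and follows the paper's argument. You take the cover of $W$ by \torus-stable affine opens $U$, note that $U\supseteq T_{f(U)}$ with the \torus-action upgrading to a $T_{f(U)}$-action, and base change to $\ul x$; you also use properness of $f$ to justify that the $U$ with $x\in f(U)$ already cover $W_{\ul x}$, a point the paper leaves implicit. Two small remarks on that covering step. The orbit is unnecessary: $\ol{\{w\}}$ itself meets $W_x$, and any open set containing a specialization of $w$ contains $w$. Your side remarks that such $U$ ``contain the generic fiber'' and that $f(U)$ omits only points ``not equal to $x$'' are misstatements, though nothing in the final argument depends on them.
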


\begin{lemma}\label{lem:normalization}
    Let $X$ be an integral affine scheme over a discrete valuation ring $R$ which contains a split $R$-torus $T$ as a \textup{fiberwise dense} open subscheme such that the action of $T$ on itself extends to $X$.
    Then the fibers of the normalization $\widetilde X$  as an $R$-scheme are irreducible and normal.
\end{lemma}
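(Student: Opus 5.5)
The plan is to translate everything into graded algebra over $R[M]$, where $M\colonequals \XX^\bullet(T)$. I will then show that the normalization is a constant toric scheme $\Spec R[\sigma^\vee\cap M]$, whose special fiber $\Spec k[\sigma^\vee\cap M]$ is visibly irreducible and normal.

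\emph{Graded description.} Write $X=\Spec A$, and let $K$ and $k$ be the fraction field and residue field of $R$, with uniformizer $\pi$. Since $T\subset X$ is open and dense and $X$ is integral, restriction gives $A\subset \OO(T)=R[M]$. Since the $T$-action on $X$ extends translation, $A$ is an $M$-graded subalgebra. Thus
\[A=\bigoplus_{m\in C} I_m\chi^m,\]
where each $I_m\subset R$ is a nonzero ideal, hence $I_m=\pi^{a(m)}R$ with $a(m)\ge 0$. Here $C\subset M$ is a submonoid generating $M$ as a group, since $A\otimes K=K[M]$, and $a$ is superadditive. The normalization $\widetilde A$ is the integral closure of $A$ in $K[M]$. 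It is again graded, because the torus action lifts to the normalization, and $\widetilde A\subset R[M]$ because $R[M]$ is normal. So $\widetilde A=\bigoplus_{m\in\widetilde C}\pi^{\tilde a(m)}R\chi^m$. By the standard description of integral closures of graded semigroup algebras, $\widetilde C=\sigma^\vee\cap M$ for the saturated cone $\sigma^\vee$ spanned by $C$, and $\tilde a(m)=\lceil h(m)\rceil$ for a concave, positively homogeneous, piecewise linear function $h\ge 0$ on $\sigma^\vee$.

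\emph{Using fiberwise density.} The map $\widetilde X\to X$ is finite and an isomorphism over the normal open $T$. Hence $T\subset\widetilde X$ is open, and $T_k$ is a nonempty open subset of $\widetilde X_k$. The complement $\widetilde X\setminus T$ is a closed $T$-stable subset, so its radical ideal is graded. Because $T$ is affine and $\widetilde X$ is normal, I expect to find a homogeneous element $f=\pi^{\tilde a(m_0)}\chi^{m_0}$ in this ideal with $\widetilde A_f=R[M]$. Since $f$ must be a unit on $T$, whose special fiber is nonempty, we get $\tilde a(m_0)=0$. And $m_0$ lies in the interior of $\sigma^\vee$, since inverting $\chi^{m_0}$ yields all of $M$. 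Making this step rigorous is the main obstacle. Showing that the complement of $T$ is cut out, up to radical, by homogeneous elements, and extracting a single element whose localization is exactly $R[M]$, needs some care. I would first try using that a $T$-stable open affine of a normal affine $T$-variety is a localization at a homogeneous semi-invariant. If that fails, I would fall back on the Kempf--Knudsen--Mumford--Saint-Donat description of normal affine toric schemes over a DVR by cones in $N_\RR\times\RR_{\ge0}$. In that language, $T_k\subset\widetilde X_k$ says exactly that the ray through $(0,1)$ lies in the cone, which forces $h\equiv 0$ directly.

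\emph{Concluding.} Now $h$ is concave and $h\ge0$, and $h(m_0)=0$ at an interior point $m_0$. Concavity gives $0=h(m_0)\ge\tfrac12\bigl(h(m_0+\epsilon v)+h(m_0-\epsilon v)\bigr)\ge 0$, so $h$ vanishes near $m_0$. Applying concavity along segments from an arbitrary $m\in\sigma^\vee$ through this neighbourhood then gives $h(m)\le 0$, so $h\equiv0$ and $\tilde a\equiv 0$. Therefore $\widetilde A=R[\sigma^\vee\cap M]$, and the special fiber is $\widetilde A\otimes k=k[\sigma^\vee\cap M]$. This is a normal domain, being a saturated affine semigroup ring. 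The generic fiber is $K[\sigma^\vee\cap M]$, which is likewise irreducible and normal. This proves both claims about the fibers of $\widetilde X$.
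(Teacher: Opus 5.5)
Your proposal has a genuine gap in the last step, and it is more than a sign slip. From $I_mI_{m'}\subseteq I_{m+m'}$ you get $a(m+m')\le a(m)+a(m')$. So $a$ and $\tilde a$ are \emph{subadditive}, and the homogeneous function $h$ with $\tilde a=\lceil h\rceil$ is \emph{convex}, not concave (it is the lower boundary of the cone $\tau^\vee\subset M_\RR\times\RR_{\ge 0}$). A convex $h\ge 0$ that vanishes at one interior point need not vanish anywhere else, so your concluding argument collapses.

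More fundamentally, the only input you take from the special fiber is $T_k\neq\emptyset$, used to get $\tilde a(m_0)=0$. Density of $T_k$ is never used, and the lemma is false without it. Take $M=\ZZ$ and $A=R[x,\pi x^{-1}]\cong R[u,v]/(uv-\pi)$:
\begin{itemize}
\item $T=D(x)$ is open, and $m_0=1$ is interior with $\tilde a(1)=0$;
\item $A$ is regular, so $\widetilde A=A$;
\item yet $h(m)=\max(0,-m)$, and the special fiber $k[u,v]/(uv)$ is reducible, with $T_k$ not dense in it.
\end{itemize}
Your fallback has the same defect. The ray through $(0,1)$ being a face of the cone only encodes that $T\subset\widetilde X$ is open. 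It holds in this example, where the cone is spanned by $(0,1)$ and $(1,1)$.

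By contrast, the step you flag as the main obstacle is harmless. The radical ideal of $\widetilde X\setminus T$ is graded, and each $D(\pi^j\chi^m)$ with $j>0$ lies in the generic fiber. So covering $T_k$ forces some $\chi^{m_0}$ into that ideal, whence $T=D(\chi^{m_0})$.

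The missing idea is to use density to control the height-one primes of $\widetilde A$ containing $\pi$. We know three things:
\begin{itemize}
\item $X_k$ is irreducible with generic point $\eta\in T_k$;
\item $\widetilde X\to X$ is finite and an isomorphism over $T$;
\item $\widetilde X_k$ is equidimensional of dimension $\dim X_k$.
\end{itemize}
Hence every component of $\widetilde X_k$ dominates $X_k$, so it contains the unique point over $\eta$. Therefore $\widetilde X_k$ is irreducible, and the prime $\mathfrak p$ of $\eta$ is the only height-one prime of $\widetilde A$ containing $\pi$. Normality then gives $\widetilde A=\widetilde A_{\mathfrak p}\cap\widetilde A[\pi^{-1}]=R[M]_{(\pi)}\cap K[\sigma^\vee\cap M]=R[\sigma^\vee\cap M]$, that is, $\tilde a\equiv 0$. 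Your final two sentences then finish the proof. In the language of cones: components of the special fiber correspond to rays of the cone not contained in $N_\RR\times\{0\}$, and density says that $(0,1)$ spans the only such ray.

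For comparison, the paper does not track $\pi$-adic weights at all. It writes $A=R[Q]$ for a finitely generated monoid $Q$ and normalizes to $R[\widetilde Q]$. Your graded description is more careful, and it shows that this identification is exactly where the density hypothesis must enter.
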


\begin{proof}
    Let $M$ be the character lattice of $T$. 
    Such an affine toric scheme $X$ is given by a graded sub-$R$-algebra $A$ of the $M$-graded group ring $R[M]$, which in turn is equivalent to the data of a finitely generated submonoid $Q$ of $M$ which generates $M$ as a group.
    The last bit ensures that $X$ contains $\GG_{\mathrm m, R}^n$ as a open dense subscheme.
    The normalization of $A$ then corresponds to the saturation monoid $\widetilde Q$ of $Q$ defined as $\{a \in M \colon na \in Q\text{ for some }n \in \ZZ\}.$
    The special fiber of $\widetilde X$ is then the spectrum of the monoid ring $k[\widetilde Q]$, where $k$ is the residue field of $R.$
    Since $k[\widetilde Q]$ is a subring of the integral domain $k[M]$, it follows that the special fiber of $\widetilde X$ is irreducible. Since $\widetilde Q$ is saturated, $k[\widetilde Q]$ is also integrally closed.
\end{proof}

Let $\widetilde W_{\ul x} \to W_{\ul x}$ be the normalization.
By Proposition \ref{prop:local-toric-scheme} and Lemma \ref{lem:normalization}, it follows that $\widetilde W_\ell$ is a flat projective normal $T_{\ul x}$-toric scheme over $R\colonequals k[t]_{(t)}$ with irreducible normal fibers.
Consider $\widetilde W_{\ul 0}$ and $\widetilde W_{\ul \infty}$.
The generic fibers of these are identified and there is an isomorphism between their special fibers which restricts to $(x,y)\mapsto (x,xy)$ on respective toral parts.
    By the theory of normal toric schemes over DVRs
    \cite[item e) at p. 192]{mumford},
    these are classified by two {\em complete} rational polyhedral fans $\Sigma_1$ and 
    $\Sigma_2$ in $\RR^2 \times \RR_{\ge 0}$, respectively.
    Let $\pi \colon \RR^2\times \RR_{\ge 0}\to \RR^2$ be the natural projection where we often view the target as sitting inside $\RR^2\times \RR_{\ge 0}$ as $\RR^2\times \{0\}.$
    We recall the following two facts: 
    \begin{itemize}
        \ii By \cite[item e') at p. 192]{mumford},
        the recession fan of $\Sigma_i$, defined as the image of $\Sigma_i \cap (\RR^2 \times \{0\})$ along $\pi$, classify the respective generic fibers, and are therefore equal, to say $\Sigma_\del$.
        \ii There is an embedding of fans $\{0\}\times \RR_{\ge 0}\inj \Sigma_i$ corresponding to the open embedding of toric schemes $\mathbb G_{\mathrm m,R}^2\inj \widetilde W_{\ul 0}$ and  $\mathbb G_{\mathrm m,R}^2\inj \widetilde W_{\ul \infty}$.
        Indeed, the toric scheme $\GG_{\mathrm m, R}^2 \to \Spec R$ is classified by the cone $\{0\}\times \RR_{\ge 0}.$
        Therefore, the components of the special fiber containing $\GG_{\mathrm m, k}^2$ are classified by the (complete) fan $\Delta_i\colonequals \{\pi(\sigma)\colon \sigma \in \Sigma_i,\, \{0\}\times \RR_{\ge 0} \subseteq \sigma\}$ as a toric variety. Furthermore, there is a one-to-one correspondence between irreducible components of the special fiber  and vertices of the polyhedral complex $\Sigma_i \cap (\RR^2 \times \{1\})$ (c.f. \cite[Proposition 7.15]{gubler} or \cite[Remark 3.5.9]{arithmetic-geometry-of-toric-varieties}).
    \end{itemize}
    We thus conclude that every ray in $\Sigma_i$ not equal to $\{0\} \times \RR_{\ge 0}$ must be contained in the boundary $\RR^2\times \{0\}$. That is, $\widetilde W_{\ul x}$ is a constant family-- the base change of a normal toric variety over $k$.
    Now, the fact that there is an isomorphism between the special fibers of $\widetilde W_{\ul 0}$ and $\widetilde W_{\ul \infty}$ extending $(x,y) \mapsto (x,xy)$ on the toral part corresponds to the fact that $A(\Delta_1) = \Delta_2$ where $A = \begin{bmatrix} 1 & 1 \\ 0 & 1\end{bmatrix}$.
    However, $\Delta_1 = \Sigma_\del = \Delta_2.$
    Thus, $\Sigma_\del$ is a complete fan in $\RR^2$ which is stable under the automorphism $A.$
    There must exist a ray $\ell \in \Sigma_\del$ which is not contained in the $x$-axis for otherwise it would not be complete.
    Then $\{A^n\ell\colon n \in \ZZ\}$ is an infinite set.
    This contradicts the finiteness of $\Sigma_\del.$
    The proof is complete. 

\printbibliography
\end{document}